\theoremstyle{plain}
\newtheorem{prop}{Proposition}[section]
\newtheorem{thm}[prop]{Theorem}
\newtheorem{lem}[prop]{Lemma}
\newtheorem{cor}[prop]{Corollary}
\theoremstyle{definition}
\newtheorem{defn}[prop]{Definition}
\newtheorem{rem}[prop]{Remark}
\numberwithin{equation}{section}
\DeclareMathAlphabet{\mathpzc}{OT1}{pzc}{m}{it}
\newcommand{\beq}{\begin{equation}}
\newcommand{\eeq}{\end{equation}}
\newcommand{\beqn}{\begin{equation*}}
\newcommand{\eeqn}{\end{equation*}}
\newenvironment{psmallmatrix}
  {\left(\begin{smallmatrix}}
  {\end{smallmatrix}\right)}
\begin{document}
\title{K3 Surfaces and Orthogonal Modular Forms}
\begin{abstract}
We determine explicit generators for the ring of modular forms associated with the moduli spaces of K3 surfaces with automorphism group $(\mathbb{Z}/2\mathbb{Z})^2$ and of Picard rank 13 and higher. The K3 surfaces in question carry a canonical Jacobian elliptic fibration and the modular form generators appear as coefficients in the Weierstrass-type equations describing these fibrations.  \end{abstract}
\author{Adrian Clingher}
\address{Department of Mathematics and Statistics, University of Missouri - St. Louis, St. Louis, MO 63121}
\email{clinghera@umsl.edu}
\author{Andreas Malmendier}
\address{Department of Mathematics \& Statistics, Utah State University, Logan, UT 84322}
\email{andreas.malmendier@usu.edu}
\author{Brandon Williams}
\address{Institute of Mathematics, Heidelberg University, 69120 Heidelberg, Germany}
\email{bwilliams@mathi.uni-heidelberg.de}
\keywords{2-elementary K3 surfaces, elliptic fibrations, Gritsenko lift, Borcherds products}
\subjclass[2020]{11F37, 11F55, 11E39, 14J15, 14J27, 14J28}
\maketitle
%
%
%
\section{Introduction and statement of results}

\subsection{Introduction}
Let $\mathcal{X}$ be a smooth complex projective K3 surface. The intersection form gives the second cohomology group $H^2(\mathcal{X}, \mathbb{Z})$ the structure of an even integral lattice, i.e., a finite-rank free abelian group together with an integer-valued nondegenerate quadratic form. This lattice is the unique (up to isometry) unimodular even lattice of signature $(3, 19)$, which we denote $\Lambda_{K_3} \cong H^{\oplus 3} \oplus E_8(-1)^{\oplus 2}$; in particular it is independent of $\mathcal{X}$. Denote by $\mathrm{NS}(\mathcal{X})$ the N\'eron--Severi lattice, the rank of which is between $1$ and $20$. 
\par In this article, we shall study some specific lattice polarizations on $\mathcal{X}$. Namely, let  $S$ be an even indefinite lattice lattice of signature $(1,\rho_S-1)$ with $1 \leq \rho_S \leq 19$ that admits a canonical primitive lattice embedding 
$S \hookrightarrow \Lambda_{K3}$. As in \cite{MR4635303}*{Section 2B}, fix $h$ to be a very irrational vector in $S\otimes \mathbb{R}$ of positive norm. Then, following \cite{MR4635303}*{Definition 2.6}, as well as \cites{Vinberg2018, MR1420220}, an $S$-polarization on $\mathcal{X}$ is, by definition, a primitive lattice embedding $i \colon S \hookrightarrow \operatorname{NS}(\mathcal{X})$ such that $i(h)$ is big and nef. Two K3 surfaces $(\mathcal{X},i)$ and $(\mathcal{X}',i')$ are said to be isomorphic under $S$-polarization if there exists an analytic isomorphism $\alpha \colon \mathcal{X} \rightarrow \mathcal{X}'$ and a lattice isometry $\beta \in O(S)$ such that $\alpha^* \circ i' = i \circ \beta$, where $\alpha^*$ is the induced morphism at the cohomology level\footnote{Our definition of isomorphic lattice polarizations coincides with the definition of multipolarization used by Vinberg \cite{Vinberg2018}, and is slightly more general than the one used in \cite{MR1420220}*{Sec.~1}.}. Isomorphism classes of lattice polarized K3 surfaces $(\mathcal{X}, i)$ may be fit into a coarse moduli space $\mathscr{M}_S$, which is known to be a quasi-projective variety of dimension $20-\rho_S$; see \cite{MR1420220}.
\par One can study the properties of $\mathscr{M}_S$ from (at least) two points of view. A first such point  of view is the algebraic one. Algebraic families of $S$-polarized K3 surfaces, built via explicit geometric models (quartic normal forms, double sextic surfaces, Weierstrass-type equations), may be used to describe $\mathscr{M}_S$ locally.  Moreover, appropriate choices of parameters for these models may be used to construct algebraic coordinates on  $\mathscr{M}_S$ and such coordinates may lead in some cases to interesting algebraic invariants.  
\par A second point of view is Hodge theoretic. Let $L$ be the orthogonal complement of $S$ in $\Lambda_{K3}$. Then, Hodge structures (periods) for $S$-polarized K3 surfaces are known (see \cite{Vinberg2018}, as well as \cites{MR2030225,MR1420220}) to be classified, up to isomorphism, by the quotient space $ \mathrm{O}^+(L) \backslash \mathscr{D}(L)$, where 
\beq
\label{eqn:ring_automorphic_forms}
  \mathscr{D}(L) \ = \ \Big\{ [z] \in \mathbb{P}\big( L \otimes \mathbb{C} \big) \colon \ \langle z , z \rangle =0 , \ \langle z , \overline{z} \rangle > 0 \Big\}^+ 
 \eeq 
 is a Hermitian symmetric domain of type IV associated with $L$. Here, the index $+$ indicates a choice of connected component and $\mathrm{O}^+(L)  < \mathrm{O}(L)$ denotes the spinor kernel subgroup of integral isometries that preserve the connected component $\mathscr{D}(L)$. 
 %
 \par An appropriate version of the Torelli Theorem (see \cites{Vinberg2018, Vinberg2010}, as well as \cite{MR2030225}) allows one to identify the quasi-projective moduli space $\mathscr{M}_S$ 
 to the quotient $ \mathrm{O}^+(L) \backslash \mathscr{D}(L)$. It should also be noted that 
 $\mathrm{O}^+(L; \mathbb{R})  = \mathrm{SO}(2,20-\rho_S)$ acts transitively on $\mathscr{D}(L)$, which leads to an identification between $\mathscr{D}(L)$ and the bounded symmetric domain  
 \beqn
 \mathrm{SO}(2,20-\rho_S )  / \mathrm{SO}(2)  \times \mathrm{O}(20-\rho_S  )  \;.
\eeqn
The classifying period space $ \mathrm{O}^+(L) \backslash \mathscr{D}(L)$ may then be seen, from this point of view, as a quotient of a bounded symmetric domain by the action of a (modular) discrete group.
\par Cases associated with specific examples for $S$-polarizations of high rank lead to specializations of the modular quotient $ \mathrm{O}^+(L) \backslash \mathscr{D}(L)$ to some classical quotient spaces in algebraic geometry. Such cases were previously studied by the first two authors \cites{MR2369941,MR2935386} \cite{MR1834454}*{Sec.~X.6} , as well as others, and largely motivate the present work. For instance, if $S = H \oplus E_8(-1) \oplus E_8(-1)$, the modular quotient is 2-dimensional and may be identified with the {\it Hilbert modular surface}
\beqn
\left [ \left (  \mathrm{SL}_2(\mathbb{Z}) \times   \mathrm{SL}_2(\mathbb{Z})  \right ) 
\rtimes 
\mathbb{Z} / 2 \mathbb{Z}   \right ] 
\backslash \left (  \mathbb{H} \times \mathbb{H} \right ) ,
\eeqn
where $\mathbb{H}$ is the complex upper half-plane. Similarly interesting, if $S$ is the rank-seventeen lattice $H \oplus E_8(-1) \oplus E_7(-1)$,
the modular quotient in question is isomorphic to the classical {\it Siegel modular three-fold of genus two}, i.e.,
\beqn
\mathrm{Sp}_4(\mathbb{Z})  \backslash   \mathbb{H}_2,
\eeqn
where $\mathbb{H}_2$ is the 3-dimensional complex Siegel modular half-space.    
\par The present paper explores the connection between algebraic descriptions for $S$-polarized K3 surfaces and orthogonal modular forms associated with the domain  $ \mathscr{D}(L)$ and group $\mathrm{O}(L)^+$. This is motivated, as mentioned earlier, by the interesting phenomena seen in the context of high-rank specific example. 
If $S$ is, for instance, given by $H \oplus E_8(-1) \oplus E_8(-1)$, then, as discussed in \cite{MR2369941}, all $S$-polarized K3 surfaces can be given an explicit algebraic construction via {\it Inose forms} --  a special class of quartic equations in $\mathbb{P}^3$. Moreover, the coefficients of the Inose form can be written explicitly in terms of the $\mathrm{SL}_2(\mathbb{Z})$ Eisenstein series $E_4$ and $E_6$, as well as the weight-twelve discriminant $\Delta$. A similar situation arises in the case 
$S = H \oplus E_8(-1) \oplus E_7(-1)$. In this context (see \cite{MR2935386}), the $S$-polarized K3 surfaces can again be described via a generalized version of Inose forms, and the Inose coefficients recover the $\mathrm{Sp}_4(\mathbb{Z})$ Eisenstein series $E_4$ and $E_6$, as well as the Igusa cusp forms $\chi_{10}$ and $\chi_{12}$. These four Siegel modular forms are known to generate the graded algebra of even-weight Siegel modular forms of genus two.    
\par For the present work, we shall restrict our discussion to a specific class of lattice polarizations: those for which the {\it general}\footnote{A $S$-polarized K3 surface $(\mathcal{X},i)$ is called general if $i(S) = \mathrm{NS}(\mathcal{X})$.} $S$-polarized K3 surface $\mathcal{X}$ satisfies ${\rm Aut}(\mathcal{X}) = (\mathbb{Z}/2\mathbb{Z})^2$. 
As we shall explain in Section 3, these polarizing lattices carry a canonical Jacobian elliptic fibration and such a fibration proves to be the essential tool for generalizing the modular form connections noticed above for specific $S$ instances of ranks $18$ and $17$. The Jacobian elliptic fibration in question can be described algebraically via a Weierstrass-type form and the polynomial coefficients appearing in this form (except for possible poles at the Noether--Lefschetz locus), turn out to be modular forms. 
\par For reasons to be explained shortly, we consider only lattices $S$ of rank 13 and higher, Under the above conditions, and via standard results by Nikulin \cites{MR633160, MR556762, MR752938}, Vinberg \cite{MR2429266}, and Kondo \cite{MR1029967}, one obtains that the possible polarizing lattices $S$ reduce to the following  list:
\beq
\label{eqn:lattices}
\begin{split}
 H \oplus E_8(-1)^{\oplus 2},  \ H \oplus E_8(-1)  \oplus E_7(-1),  \\
 H \oplus E_8(-1)  \oplus D_6(-1),   \ H \oplus E_8(-1)  \oplus D_4(-1) \oplus A_1(-1), \\  
 H \oplus E_8(-1)  \oplus A_1(-1)^{\oplus 4}, \ H \oplus D_8(-1)  \oplus D_4(-1), \ H \oplus E_7(-1)  \oplus A_1(-1)^{\oplus 4}.
\end{split}
\eeq
Here, $H$ is the standard rank-two hyperbolic lattice, and the lattices $A_n$, $D_n$, and $E_n$ are the positive definite root lattices associated with the respective root systems.  If one chooses $S$ from Equation~(\ref{eqn:lattices}) with $\rho_S=10+k$ for $3 \le k \le 8$, then one obtains, as orthogonal complement in the K3 lattice, $L = H^{\oplus 2} \oplus A_1(-1)^{\oplus 8-k}$, or  $L = H \oplus H(2) \oplus D_4(-1)$ if $S = H \oplus D_8(-1)  \oplus D_4(-1)$. 

\par In the above cases, there is a good understanding of the rings of modular forms, thanks to works by Woitalla \cite{MR3883325} and Wang and Williams \cite{MR4130466}. Namely, consider the graded ring of modular forms on $\mathscr{D}(L)$ with respect to the group $\mathrm{O}^+(L)$: 
\beq
 \label{eqn:ring_intro}
   M_*(\mathrm{O}^+(L)) = \bigoplus_{m=0}^\infty M_m(\mathrm{O}^+(L)) \ , 
\eeq
as defined for instance in \cite{MR4130466}. This graded ring is known to be finitely generated and, in our cases of interest, one has (see \cites{MR3883325, MR4130466}):
\begin{thm} 
\label{thm_generators}
For lattices $L$ as in the left-side column of the table below , the modular form ring 
$M_*(\mathrm{O}^+(L))$ 
is freely generated by a set of modular forms with weights tabulated in the right-side column:
\begin{center}
\begin{tabular}{l|l}
$L$ & weights\\
\hline
$H \oplus H(2) \oplus D_4(-1)$ & 2, 6, 8, 10, 12, 16, 20\\
$H^{\oplus 2} \oplus A_1(-1)^{\oplus 4}$ & 4, 4, 6, 6, 8, 10, 12\\
$H^{\oplus 2} \oplus A_1(-1)^{\oplus 3}$ & 4, 6, 6, 8, 10, 12\\
$H^{\oplus 2} \oplus A_1(-1)^{\oplus 2}$ & 4, 6, 8, 10, 12\\
$H^{\oplus 2} \oplus A_1(-1)$ & 4, 6, 10, 12\\
$H^{\oplus 2}$ & 4, 6, 12\\
\end{tabular}
\end{center}
\end{thm}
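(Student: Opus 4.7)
The plan is to exploit the fact that each $L$ in the table splits as $L = H \oplus L'$ with $L' = H(N) \oplus L_0(-1)$, $N \in \{1,2\}$ and $L_0$ either zero, $A_1^{\oplus r}$ for $r \in \{1,2,3,4\}$, or $D_4$. Fixing a rational $0$-dimensional cusp associated with this splitting realizes $\mathscr{D}(L)$ as a tube domain and produces a Fourier--Jacobi expansion
\[
  F(\tau, \mathfrak{z}, \omega) \ = \ \sum_{m \geq 0} \phi_m(\tau, \mathfrak{z}) \, e^{2\pi i m\omega}
\]
that identifies a weight-$k$ form in $M_k(\mathrm{O}^+(L))$ with a compatible sequence of Jacobi forms of weight $k$ and index $mN$ with lattice index $L_0$. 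For the lattices $L_0$ appearing here, Wirthm\"uller's theorem (and its refinements) gives the bigraded ring $J_{*,*}(L_0)$ as an explicit polynomial algebra, which is the algebraic backbone of the argument.

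On the construction side I would produce the tabulated generators by the two standard lifts. Using Gritsenko's additive lift, Jacobi Eisenstein series $E_{k,L_0}$ of weights $4$ and $6$ and Jacobi cusp forms of weights $8, 10, 12$ (theta-block type or $\eta$-products of $\Delta$--pullback type) yield the weight-$4$, $6$, $8$, $10$, $12$ generators. The remaining entries, including the extra weight-$4$ or weight-$6$ in the $A_1^{\oplus 4}$ and $A_1^{\oplus 3}$ rows and the weights $2, 16, 20$ appearing in the $H \oplus H(2)\oplus D_4(-1)$ row, come from Borcherds' multiplicative lift applied to explicit weakly holomorphic vector-valued modular forms of weight $1 - \mathrm{rk}(L)/2$ for the Weil representation of the discriminant form of $L$, chosen so that the input principal part produces a reflective divisor supported on a short-root Heegner divisor; the existence of these inputs for the lattices in question can be checked by obstruction theory (Bruinier--Borcherds).

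To promote the list from ``generating'' to ``freely generating'', I would compute the Hilbert series of $M_*(\mathrm{O}^+(L))$ in two ways. First, the candidate polynomial ring has the obvious series $\prod_i (1-t^{k_i})^{-1}$. Second, for fixed weight, dimensions of $M_k(\mathrm{O}^+(L))$ are obtained from the Jacobi expansion by summing $\dim J_{k,mN}(L_0)$ subject to the consistency conditions coming from cusp orbits; Wirthm\"uller's polynomial presentation makes this summation explicit. Matching the two series weight by weight up to a dimension high enough to rule out further low-weight obstructions then forces both generation and the absence of relations. Algebraic independence can additionally be verified by restricting to a $1$-dimensional Heegner or boundary locus where the $\phi_m$ reduce to classical $\mathrm{SL}_2(\mathbb{Z})$--forms whose algebraic independence is immediate.

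The hardest step will be the completeness argument, i.e.\ showing that every $F \in M_*(\mathrm{O}^+(L))$ lies in the subring generated by the listed lifts. The additive lifts alone span only the image of Gritsenko's map, and closing the gap requires exhibiting Borcherds products of exactly the right weights whose divisors kill the obstructions measured by Eichler--Zagier-type theta decomposition. Pinning down the correct principal parts of the vector-valued input forms is a lattice-by-lattice combinatorial exercise, and is particularly delicate for $L = H \oplus H(2) \oplus D_4(-1)$, where the weight-$2$ generator is anomalously low and has to be constructed as a Borcherds product with a single short-root divisor on a paramodular-type component. This is the step where the proof truly becomes lattice-specific and where I would expect to spend most of the effort.
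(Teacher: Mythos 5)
You should first be aware that the paper does not prove this statement at all: Theorem~\ref{thm_generators} is quoted from Woitalla (for $H^{\oplus 2}\oplus A_1(-1)^{\oplus n}$) and Wang--Williams (for the general root-system cases, including $H\oplus H(2)\oplus D_4(-1)$), so your attempt has to be measured against the arguments of those references rather than against anything in this text. Your proposal assembles the correct raw material --- Fourier--Jacobi expansion at a $0$-dimensional cusp, Wirthm\"uller's description of the Weyl-invariant Jacobi forms of index $A_1^{\oplus r}$ and $D_4$, Gritsenko lifts for the generators of weight $\ge 4$, and a Borcherds product for the lowest-weight generator --- and this matches the toolkit actually used (cf.\ Theorems~\ref{thm:Bn} and \ref{th:borcherds} in the paper). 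The gap is in the step that promotes the candidate list to a \emph{free} generating set, and it is genuine.

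Concretely: (i) your Hilbert-series computation of $\dim M_k(\mathrm{O}^+(L))$ ``by summing $\dim J_{k,mN}(L_0)$ subject to consistency conditions'' is not available. The Fourier--Jacobi map is injective but very far from surjective, and there is no effective description of its image, so this procedure does not yield $\dim M_k$ in any weight; and matching series only ``up to a dimension high enough'' cannot exclude relations, which a priori may occur in arbitrarily high weight. (ii) Restricting to a $1$-dimensional Heegner or boundary locus cannot certify algebraic independence: you have $n+1$ candidate generators on an $n$-dimensional domain (e.g.\ $7$ forms on the $6$-dimensional domain for $A_1(-1)^{\oplus 4}$), and any $n+1$ functions become algebraically dependent upon restriction to a curve. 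The missing idea that actually closes the argument in the cited works is one of the following two mechanisms. Either the modular Jacobian criterion of Wang--Williams: form the Jacobian $J$ of the $n+1$ candidates, a modular form of weight $n+1+\sum_i k_i$ with the determinant character, and verify that $J$ equals, up to a nonzero constant, the reflective Borcherds product vanishing to order exactly one on every mirror of a reflection in $\mathrm{O}^+(L)$; freeness and generation are then formal consequences. Or Woitalla's induction on the rank: show that restriction to the sub-Grassmannian attached to a mirror $r^\perp$ maps the candidate subring onto the already-known ring one rank down, and that any form vanishing on the orbit of $r^\perp$ is divisible (via Koecher's principle) by the distinguished Borcherds product with exactly that divisor, so that generators can be peeled off one mirror-orbit at a time. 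Without one of these your argument does not terminate. A small factual correction: for $H\oplus H(2)\oplus D_4(-1)$ the weight-$2$ generator used in Section~6 is the form $E_2$ with leading Fourier--Jacobi coefficient $1+24q+\cdots\in M_2(\Gamma_0(2))$, i.e.\ an additive (Eisenstein-type) lift, not a Borcherds product as you suggest.
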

\subsection{Statement of Results}
This paper gives an explicit algebro-geometric interpretation to the modular forms of Theorem \ref{thm_generators}. It follows from \cites{MR4704757, MR4544843, Clingher:2020baq, Clingher:2022} that, for $S$ as in (\ref{eqn:lattices}), the general $S$-polarized K3 surfaces $\mathcal{X}$ carries a unique (up to automorphism) Jacobian elliptic fibration with with Mordell-Weil group isomorphic to $\mathbb{Z}/2\mathbb{Z}$. This fibration has the  canonical van~Geemen-Sarti equation   
\beq
\label{eqn:vgs_a_intro}
  \overline{\mathcal{X}}\colon \quad y^2  = x^3 + A(t) \,   x^2  + B(t) x \,,
\eeq
where $X, Y$ and $t$ are affine coordinates, and $A, B$ are polynomials in $t$ of degree $3$ and $ \leq 8-k$, respectively.  If $S = H \oplus D_8(-1)  \oplus D_4(-1)$, then $\deg(A) = 2$ and $\deg(B) = 5$.  We denote the surface in~(\ref{eqn:vgs_a_intro}) by $\overline{\mathcal{X}}$ to indicate that the components in the reducible fibers not meeting the section have been blown down. In turn, $\mathcal{X}$ is obtained as the minimal resolution of $\overline{\mathcal{X}}$. Moreover, the polynomials $A$ and $B$ can be arranged to have the following form:
\beq
\label{eqn:coeffs_intro}
\begin{array}{lll}
A  =  t^3 + a_4 t + a_6,&
B  =  b_{2(k-2)} t^{8-k} + \dots + b_{12}, &
\text{for $S \neq H \oplus D_8(-1)  \oplus D_4(-1)$,} \\
A =  2\big(c_2 t^2 + c_6 t  + c_{10}\big), &
B =  t^5 + d_8 t^3 + d_{12} t^2 + d_{16} t + d_{20}, &
\text{for $S = H \oplus D_8(-1)  \oplus D_4(-1)$}.
\end{array}
\eeq
In this article, we establish the following theorem:
\begin{thm}
\label{mainth1} 
Consider lattices $S$, as in Equation~(\ref{eqn:lattices}), with $\rho_S \ge 14$. The coefficients $\{a_m, b_n\}$ or $\{c_m, b_n\}$ of Equation~(\ref{eqn:coeffs_intro}) are modular forms with respect to the appropriate group $\mathrm{O}^+(L)$, and are generators for the ring $M_*(\mathrm{O}^+(L))$ in the sense of Theorem~\ref{thm_generators}. 
\end{thm}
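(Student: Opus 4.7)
The plan is to break the theorem into two parts: first, verify that each polynomial coefficient in Equation~(\ref{eqn:coeffs_intro}) lifts to a holomorphic modular form on $\mathscr{D}(L)$ of the weight suggested by its subscript; second, use Theorem~\ref{thm_generators} to argue that, because the list of candidate weights matches the list of free generators exactly, the algebraic independence of the candidates is enough to force them to generate the whole ring. The global framework comes from the Torelli isomorphism $\mathscr{M}_S \cong \mathrm{O}^+(L)\backslash \mathscr{D}(L)$, together with the identification of $M_k(\mathrm{O}^+(L))$ with sections of the $k$-th power of the Hodge (tautological) line bundle on $\mathscr{D}(L)$.

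For the first step, I would exploit the fact that the van~Geemen--Sarti equation~(\ref{eqn:vgs_a_intro}) is canonical up to the Weierstrass rescaling $(x,y,t)\mapsto(\lambda^{2}x,\lambda^{3}y,\mu t)$ together with a translation in $t$. Normalizing $A$ to be monic cubic (respectively $B$ to be monic of degree $5$ in the $H\oplus D_8(-1)\oplus D_4(-1)$ case) completely kills this ambiguity and converts each remaining coefficient into a well-defined meromorphic function on $\mathscr{M}_S$. The weights are then read off by comparing the scaling of the holomorphic symplectic form $\omega_\mathcal{X}=dx\wedge dt/y$ under the rescaling with the natural automorphy factor on $\mathscr{D}(L)$. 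The index $m$ of $a_m$, $b_n$, $c_m$, $d_n$ has been pre-assigned to match this weight, which is the observation one must verify concretely. Holomorphy at interior points of $\mathscr{M}_S$ reduces to the fact that over generic moduli the Weierstrass polynomial data remains bounded; holomorphy at the boundary components of $\mathscr{D}(L)$ (the $0$- and $1$-dimensional cusps) follows from the Koecher principle in the rank-$L\geq 5$ cases, combined with a direct analysis near the rational cusps where the coefficients admit Fourier expansions coming from degenerations of the Jacobian fibration.

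For the second step, once the $a_m$, $b_n$ (or $c_m$, $d_n$) are shown to lie in $M_*(\mathrm{O}^+(L))$ with the claimed weights, I would combine two inputs. On one hand, the number of coefficients equals the number of free generators asserted by Theorem~\ref{thm_generators}, and the two multi-sets of weights match: for the rank $\rho_S=10+k$ cases with $4\leq k\leq 8$ one has $2+(9-k)=11-k$ coefficients of weights $(4,6,2(k-2),2(k-1),\ldots,12)$, in bijection with the tabulated generators; the $H\oplus D_8(-1)\oplus D_4(-1)$ case matches by inspection. On the other hand, these coefficients separate isomorphism classes of general $S$-polarized K3 surfaces (that is exactly what the van~Geemen--Sarti normal form gives), so they are algebraically independent over $\mathbb{C}$. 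A polynomial subring of a polynomial ring in the same number of variables with matching Hilbert function must be the whole ring, which closes the argument.

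The main obstacle I anticipate is the precise weight calibration: one has to be sure that the Hodge line bundle on $\mathscr{D}(L)$ is identified with the correct power of the line bundle in which $A$ and $B$ naturally live over $\mathscr{M}_S$, so that the subscripts really coincide with the modular weights rather than some shifted index. This calibration is delicate in the two non-standard situations, namely the lattice $H\oplus D_8(-1)\oplus D_4(-1)$, where the Weierstrass data has the asymmetric form $\deg A=2$, $\deg B=5$ and where the scaling weight of~$B$ (rather than $A$) anchors the normalization, and also at the highest $k=4$ case, where the extra low-weight coefficient $b_4$ must be distinguished from the other weight-$4$ form that comes with $L=H^{\oplus 2}\oplus A_1(-1)^{\oplus 4}$. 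Both are best handled by writing down an explicit lift of one of the weight-$4$ or weight-$6$ generators as a Gritsenko or Borcherds product and matching Fourier coefficients against the polynomial coefficient directly; the remaining generators are then forced by the weight/dimension count.
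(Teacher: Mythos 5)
Your second step is a legitimate shortcut that the paper does not use: once the coefficients are known to be holomorphic modular forms for $\mathrm{O}^+(L)$ of the indicated weights, algebraic independence (which does follow from Corollary~\ref{cor:uniqueness}, since the coefficients give a generically injective, hence dominant, map from the $(10-k)$-dimensional moduli space to a weighted projective space of the same dimension) plus the matching multiset of weights forces the Hilbert series of $\mathbb{C}[a_m,b_n]$ to agree with that of the free algebra $M_*(\mathrm{O}^+(L))$, so the two rings coincide. The paper instead identifies every coefficient explicitly as a polynomial in named Gritsenko lifts by induction on the rank, which is far more work but also yields the closed formulas of Appendix~\ref{appendix}; your counting argument would prove generation without producing those formulas.

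The genuine gap is in your first step, specifically the claim that holomorphy ``reduces to the fact that over generic moduli the Weierstrass polynomial data remains bounded.'' The danger is not at generic moduli and not at the cusps (where Koecher does apply), but along the interior Heegner divisors, i.e.\ the Noether--Lefschetz locus, where the Picard lattice jumps and Proposition~\ref{prop:frame} no longer guarantees that the canonical alternate fibration persists in the same normal form. This failure mode actually occurs one rank lower: for $\rho_S=13$ the coefficients $b_k$ acquire genuine double poles along the exceptional hyperplane $r^{\perp}$ of (\ref{eqn:rank13_r}), which is exactly why Theorem~\ref{mainth2} asserts only meromorphic modularity. So for $\rho_S\ge 14$ you must positively rule out poles along every $\mathrm{O}^+(L)$-orbit of Heegner divisors, and this cannot be done by weight bookkeeping alone: a priori the $a_m,b_n$ are only meromorphic forms holomorphic off the Heegner divisors, and meromorphic forms of these weights exist in abundance. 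The paper closes this by showing that the roots $\beta_i$ of $B$ are weight-two meromorphic forms on the discriminant kernel whose poles are confined to the specific hyperplanes $r_i^{\perp}$ where the leading coefficient of $B$ vanishes, hence are constant multiples of the explicit lifts $\mathrm{G}(f_i)$ (there being no nonzero holomorphic weight-two forms), together with the recursive degeneration statements of Corollaries~\ref{cor:recursiveI} and~\ref{cor:recursiveII}; some version of this divisor-by-divisor analysis, or of the Borcherds-product identification of the discriminant, is indispensable and is missing from your proposal.
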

A separate situation is given by the remaining rank-$13$ case, i.e., $S=H \oplus E_7(-1)  \oplus A_1(-1)^{\oplus 4}$, with orthogonal complement $L=H^{\oplus 2} \oplus A_1(-1)^{\oplus 5}$. In this situation, a slightly modified version of Theorem \ref{mainth1} holds. The domain $ \mathscr{D}(L) $ contains a special Heegner divisor $C'$ consisting of periods orthogonal to a certain element $r \in L'$ of discriminant form $1/4$ where $L'$ is the dual lattice. Then: 
\begin{thm}
\label{mainth2} 
For $S=H \oplus E_7(-1)  \oplus A_1(-1)^{\oplus 4}$,  the coefficients $\{a_m, b_n\}$ in  Equation~(\ref{eqn:coeffs_intro}) are meromorphic modular forms on $ \mathscr{D}(L) $, of weight 2, 4, 4, 6, 6, 8, 10, 12, with respect to a subgroup 
$ \Gamma \leq \mathrm{O}^+(L) $ of finite co-volume. Moreover, $\{a_m, b_n\}$ form a set of generators for  the ring $M^{!}_*(\Gamma)$  of meromorphic modular forms on $\mathscr{D}(L)$ which are holomorphic away from the Heegner divisor $C'$.
\end{thm}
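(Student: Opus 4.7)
The plan is to mirror the proof of Theorem~\ref{mainth1}, with two modifications characteristic of this rank-$13$ situation: the normal form acquires a generic pole along a codimension-one locus, and the coarse moduli space $\mathscr{M}_S$ is identified with a finite quotient of $\mathscr{D}(L)$ by a proper finite-index subgroup $\Gamma \leq \mathrm{O}^+(L)$.

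First, I would establish the modular transformation of $\{a_m,b_n\}$. Using the construction of the canonical Jacobian elliptic fibration from \cites{MR4704757, Clingher:2020baq, Clingher:2022}, the van~Geemen--Sarti equation~(\ref{eqn:vgs_a_intro}) for a general $S$-polarized K3 surface is determined up to the action $(x,y,t)\mapsto (\lambda^2 x,\lambda^3 y,\mu t + \nu)$ on the parameters together with the residual gauge $\mu,\nu$. Exactly as in the proof of Theorem~\ref{mainth1}, this action defines a principal $\mathbb{C}^\times$-bundle over $\mathscr{M}_S$, the coefficients $a_m, b_n$ of~(\ref{eqn:coeffs_intro}) become sections of its powers, and via the Torelli identification they pull back to modular forms on $\mathscr{D}(L)$ of the stated weights. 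The subgroup $\Gamma$ is the image of $\mathscr{M}_S \to \mathrm{O}^+(L)\backslash \mathscr{D}(L)$; unlike the six lattices of Theorem~\ref{mainth1}, replacing $E_8(-1)$ by $E_7(-1)$ in $S$ forces an extra choice of embedding $E_7 \hookrightarrow E_8$ (equivalently, a distinguished $A_1$ among the four copies), which is reflected by the factor $r \in L'$ of discriminant $1/4$ and makes $\Gamma$ a proper subgroup of $\mathrm{O}^+(L)$.

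Second, I would analyze the pole structure along $C'$. The Heegner divisor $C'$ parametrizes those $S$-polarized K3 surfaces on which the lattice polarization enhances so that the distinguished $A_1$-fiber collides with the $E_7$-fiber; the canonical normal form~(\ref{eqn:vgs_a_intro}) degenerates there, forcing $a_m$ and $b_n$ to acquire poles of bounded order along $C'$ while remaining holomorphic elsewhere. To verify holomorphy away from $C'$, I would perform a local deformation analysis along each reflective Heegner divisor of $L$, using that the discriminant of~(\ref{eqn:vgs_a_intro}) encodes the Noether--Lefschetz jumps; only the divisor dual to $r$ produces a genuine degeneration.

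Third, for generation, the main input is a Hilbert series identity for $M^!_*(\Gamma)$. I would obtain it by relating this ring to the known graded algebra $M_*(\mathrm{O}^+(H^{\oplus 2}\oplus A_1(-1)^{\oplus 4}))$ of Theorem~\ref{thm_generators}, via the restriction map attached to the quasi-pullback along $C'$: restricting a form in $M^!_*(\Gamma)$ to $C'$ (after suitable rescaling by a Borcherds product vanishing along $C'$) lands in the modular form ring for the sub-lattice of signature $(2,6)$. Matching the resulting Hilbert series against that of the free polynomial algebra on generators of weights $2,4,4,6,6,8,10,12$, and confirming algebraic independence from the fact that the $\{a_m, b_n\}$ separate points on an open dense subset of $\mathscr{M}_S$ (so define a birational embedding into weighted projective space), would complete the proof. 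The principal obstacle lies in the Hilbert series computation: the ring $M^!_*(\Gamma)$ is not tabulated in \cites{MR4130466,MR3883325}, so one must either extend the Borcherds-product machinery of Wang--Williams to the pair $(L,C')$, or else transport the result across the restriction map and track the contribution of forms with first-order poles along $C'$.
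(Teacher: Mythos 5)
Your framing of the first two steps is broadly consistent with the paper: the coefficients are pinned down by the uniqueness of the van Geemen--Sarti normal form (Corollary~\ref{cor:uniqueness13}) up to the weighted rescaling, hence define modular forms of the stated weights, and they may acquire poles only along the special discriminant-$1/4$ Heegner divisor $C'=r^{\perp}$ (with $r$ as in~(\ref{eqn:rank13_r})), where the polarization jumps to $H\oplus D_8(-1)\oplus D_4(-1)$ and the normal form with monic cubic $A$ breaks down. (Your geometric gloss on why this happens --- a ``distinguished $A_1$ colliding with the $E_7$ fiber'' and a resulting proper subgroup $\Gamma$ --- does not match the actual picture: there are two $\mathrm{O}^+(L)$-orbits of discriminant-$1/4$ divisors, and along the orbit of the $r_i^{\perp}$ the equation merely specializes to the rank-$14$ family via $b_2\to 0$, with no poles; only the orbit of $r^{\perp}$ is exceptional.)

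The genuine gap is in your third step. You reduce generation to a Hilbert series identity for $M^!_*(\Gamma)$ and then concede that this ring ``is not tabulated'' and that computing it is ``the principal obstacle.'' But this is precisely the essential external input of the paper's argument: Theorem~\ref{thm:B5} (Theorem 5.24 of \cite{WW21}) states that $M^!_*(\mathrm{O}^+(L_5))$, meromorphic forms holomorphic away from the orbit of $r^{\perp}$, is \emph{freely} generated in weights $2,4,4,6,6,8,10,12$ by $F_4$, $E_6$ and the meromorphic theta lifts $\chi_{2+2k}=\mathrm{G}(\Delta f_k)$. Given that, the paper proves generation not by a Hilbert series or birationality argument but by explicitly writing each $a_m,b_n$ as a polynomial in these free generators with an invertible (triangular, nonzero leading coefficient) change of variables: $b_2=-12\chi_2$, $b_4=12\chi_4-360\chi_2^2$, $a_4=-3F_4$, etc. The computation itself requires tools absent from your proposal: the factorization $B(t)=\prod(t-\beta_i)$ with $\beta_i=12\,\mathrm{G}(f_i)$ (forced because there are no nonzero holomorphic weight-two forms on the discriminant kernel), recursive restriction to the rank-$14$ family along $r_i^{\perp}$, and the ``paramodular restriction'' $\beta_1=\dots=\beta_5$ landing in paramodular forms of level $5$, where the undetermined constants in $a_4,a_6$ are fixed by identifying $\beta^3+a_4\beta+a_6$ with a reflective Borcherds product classified in \cite{Wil20}. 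Separately, your fallback argument that the $\{a_m,b_n\}$ ``separate points on a dense open subset'' would at best yield that they generate the fraction field, not the graded ring; without the free-algebra structure theorem the proof does not close.
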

The proofs for Theorems \ref{mainth1} and \ref{mainth2} are given in Sections 5 and 6. In the case of Theorem \ref{mainth1}, the reasoning follows a recursive argument based on the rank of the polarizing lattice, gradually increasing the complexity of the polynomials in Equations~(\ref{eqn:coeffs_intro}). Due to the known structure of $M(\mathscr{D}(L), \Gamma),$ we have a finite space of possibilities for the coefficients of (\ref{eqn:coeffs_intro}). Moreover, certain expressions in the coefficients can be shown to vanish when the surface belongs to the Noether-Lefschetz locus or when Equation (\ref{eqn:vgs_a_intro}) fails to define a K3 surface entirely. The Noether-Lefschetz locus consists of all the points $(\mathcal{X}, i)$ for which $i\colon S \hookrightarrow \mathrm{NS}(\mathcal{X})$ is a proper inclusion. As we shall see, the Noether-Lefschetz divisors in the period domain correspond exactly to Heegner divisors in the sense of Borcherds \cite{B98} and one can often show that the aforementioned expressions coincide with Borcherds's automorphic products, for which one knows the Fourier coefficients. This, in turn, is enough information to determine the coefficients of (\ref{eqn:coeffs_intro}). In Appendix~\ref{appendix}, we give a list of the coefficients of the van Geemen-Sarti form~(\ref{eqn:vgs_a_intro}) for all lattices $S$ in Equation~(\ref{eqn:lattices}) that we constructed. In turn, our construction of the moduli spaces as open subvarities of suitable weighted projective spaces implies that the moduli spaces of the $S$-polarized K3 surfaces are unirational. This confirms previous observations by M.~Reid \cite{MR605348} and S.~Ma~\cite{MR3275655}. Furthermore, if we set $A\equiv 0$ in Equation~(\ref{eqn:vgs_a_intro}) we obtain a 5-parameter family of K3 surfaces that is parameterized by eight points in $\mathbb{P}^1$, defining the roots of $B$.  This family appeared in work by Kondo in \cite{MR2296434} where he relates the periods of these K3 surfaces to the complex ball uniformization of the moduli space of eight ordered points on the projective line.

\subsection{Previous work}

$S$-polarized K3 surfaces for three cases of $S$ in (\ref{eqn:lattices}) have previously appeared in well-known works of Inose, Matsumoto and other authors. Our work gives another interpretation and proof of their results. Let us briefly explain the geometric motivation behind these examples.
\par A Kummer surface $\mathcal{Y}=\operatorname{Kum}(E_1 \times E_2)$ associated with two non-isogenous elliptic curves $E_1, E_2$ admits several inequivalent elliptic fibrations; see \cites{MR1013073, MR2409557}. In particular, such Kummer surfaces admit an alternate fibration, i.e., an elliptic fibration with section and a Mordell-Weil group that contains a 2-torsion section. Fiber-wise translation by the 2-torsion section gives rise to a canonical symplectic involution, known as van Geemen-Sarti involution. If one factors the Kummer surface $\mathcal{Y}$ by the involution and then resolves the eight occurring singularities, a new K3 surface $\mathcal{X}$ is recovered, related via a rational double-cover map to the Kummer surface. This construction is referred to in the literature as the Nikulin construction.  In turn, each surface $\mathcal{X}$ also admits a canonical van Geemen-Sarti involution. Moreover, if one repeats the Nikulin construction on $\mathcal{X}$, the original Kummer surface is recovered, together with a (generically) two-to-one rational map $\pi\colon \mathcal{X} \to \mathcal{Y}$. One can check that the surfaces $\mathcal{X}$ are polarized by the lattice $H \oplus E_8(-1) \oplus E_8(-1)$; see \cite{MR2369941}. There is also a natural push-forward map $\pi_*$ that restricts to a morphism between the transcendental lattices $\mathrm{T}_\mathcal{X}$ and $\mathrm{T}_\mathcal{Y}$ of $\mathcal{X}$ and $\mathcal{Y}$. In fact, $\pi_*$ induces a Hodge isometry between $\mathrm{T}_\mathcal{X}(2)$ and $\mathrm{T}_\mathcal{Y}$ and we have $\mathrm{T}_\mathcal{X} = \mathrm{T}_{E_1 \times E_2}$. This relation is referred to as a Shioda–Inose structure on $\mathcal{X}$.
\par We shall refer to $\mathcal{X}$ as \emph{Inose K3 surfaces} as they admit a birational model isomorphic to a projective quartic surface introduced by Inose \cite{MR578868}. Shioda and Inose considered these quartic surfaces as an analogue of Weierstrass equations defining elliptic curves. This analogy is particularly fitting since --as we explained above-- the coefficients of the Inose quartic are symmetric expressions of pairs of the modular forms that appear as the coefficients in the Weierstrass equation for an elliptic curve.
\par The entire picture then generalizes to the Picard rank seventeen case when surfaces $mathcal{Y}$ are principally polarized Kummer surfaces: here, the elliptic fibrations on the Jacobian Kummer surfaces were classified in \cite{MR3263663}. The (generalized) Inose K3 surfaces $\mathcal{X}$ are obtained in a similar manner as before from a unique alternate fibration and polarized by the rank seventeen lattice $H \oplus E_8(-1) \oplus E_7(-1)$;  the details may be found in \cites{MR2427457, MR2824841, MR2935386}. The Inose K3 surfaces $\mathcal{X}$ can again be viewed as K3 surfaces admitting Shioda-Inose structures; see \cites{MR728142, MR2279280, MR3087091}.  Now, the coefficients of the defining equations of the Inose K3 surfaces can be expressed in terms of the even generators of the ring of modular forms that were found by Igusa \cites{MR114819, MR141643, MR168805}.
\par Aspects of this construction were generalized for K3 surfaces of lower Picard rank in \cites{MR2824841, MR2254405, MR4015343, CHM19}. Since there are no algebraic Kummer surfaces of Picard rank lower than seventeen, those needed to be replaced by other K3 surfaces; a suitable choice for Picard number sixteen turned out to be the surfaces $\mathcal{Y}$ obtained as double covers of the projective plane branched over the union of six lines. In this way, the rank-seventeen case is recovered by making the six lines tangent to a common conic. The surfaces $\mathcal{Y}$ are polarized by the lattice $H \oplus D_6(-1) \oplus D_4(-1)^{\oplus 2}$. This family was studied by Matsumoto \cites{MR1103969, MR1204828}. The moduli of the family are well understood and are related to Abelian fourfolds of Weil type~\cites{MR3506391, MR1335243}. Again, one can  obtain (generalized) Inose K3 surfaces $\mathcal{X}$ of Picard rank sixteen which are polarized by the lattice $H \oplus E_8(-1) \oplus D_6(-1) \cong H \oplus E_7(-1) \oplus E_7(-1)$; see \cite{CHM19}.  
\par In this article we determine the modular forms that constitute the coefficients in unique equations  for all $S$-polarized K3 surfaces with $S$ given in (\ref{eqn:lattices}), generalizing the explicit relation between modular forms and K3 surfaces with automorphism group $(\mathbb{Z}/2\mathbb{Z})^2$ down to Picard rank 13.
\section*{Acknowledgements}
A.M. acknowledges support from the Simons Foundation through grant no.~202367.  B.W. was supported by Lehrstuhl A f\"ur Mathematik of RWTH Aachen University during most of the work on this paper. Many of the computations in this paper were carried out using the SageMath computer algebra system.
\section{Geometric Aspects: Special Elliptic Fibrations}
\subsection{Preliminaries}
Let us recall some standard facts related to the classification of Jacobian elliptic fibrations on a K3 surfaces. Given a Jacobian elliptic fibration $\pi_\mathcal{X} \colon \mathcal{X} \rightarrow \mathbb{P}^1$ with a section $\sigma_\mathcal{X}$, the cohomology classes of the fiber and section span a rank-two lattice isomorphic to the standard rank-two hyperbolic lattice $H$. One therefore obtains a primitive lattice embedding $ H \hookrightarrow \operatorname{NS}(\mathcal{X})$. The converse of this result also holds: given a primitive lattice embedding $H \hookrightarrow \operatorname{NS}(\mathcal{X})$ whose image contains a quasi-ample class, it is known (see, for instance, \cite{MR2355598}*{Thm.~2.3}) that there exists a Jacobian elliptic fibration on the surface $\mathcal{X}$ whose fiber and section classes span $H$. Moreover,  given $H \hookrightarrow \operatorname{NS}(\mathcal{X})$, one can always compose the embedding with an isometry of $\mathrm{NS}(\mathcal{X})$ in such a way that the image of the resulting embedding contains a quasi-ample class. One therefore has a natural one-to-one correspondence between lattice embeddings $H \hookrightarrow \operatorname{NS}(\mathcal{X})$, up to isometry, and Jacobian elliptic fibrations on $\mathcal{X}$, up to automorphism. 
\par For a given lattice embedding $j \colon H \hookrightarrow \mathrm{NS}(\mathcal{X})$, associated with a Jacobian elliptic fibration, denote by $R(-1)= j(H)^{\perp}$ the orthogonal complement in $\mathrm{NS}(\mathcal{X})$. The lattice $R$  is known as the {\bf frame} of the respective Jacobian elliptic fibration.  Associated with the frame, denote by $R^{\text{root}}$ the sub-lattice spanned by the roots of $R$. Consider also the factor group $\mathpzc{W}= R/ R^{\text{root}}$.  The pair $(R^{\text{root}}, \mathpzc{W})$ is the {\bf type} of the frame, and plays an important role in the classification of Jacobian elliptic fibrations. The lattice $R^{\text{root}}$ is also known as the trivial lattice associated with the elliptic fibration and is a direct sum of ADE  type lattices. $R^{\text{root}}$ provides information about the reducible fibers, while the group $\mathpzc{W}$ is isomorphic to the the Mordell-Weil group of the fibration.
\par Refining the above arguments, one would like to distinguish between non-isomorphic Jacobian elliptic fibrations on $\mathcal{X}$ sharing the same frame, or frame type.  A detailed discussion on this aspect may be found in \cite{Braun:2013aa}. As established by Festi and  Veniani  \cite{FestiVeniani20}, for every frame type $(R^{\text{root}}, \mathpzc{W})$, there are finitely many Jacobian elliptic fibrations of this type supported on $\mathcal{X}$, up to the action of the automorphism group $\mathrm{Aut}(\mathcal{X})$. Moreover, one has a one-to-one correspondence between the possible Jacobian elliptic fibration classes and a certain lattice-theoretic double-coset. The order of this double-coset is known as the {\bf multiplicity} of the frame type.     
\par A full classification of the possible frame pairs $(R^{\text{root}}, \mathpzc{W})$, as well as their corresponding multiplicities, in the case when $\mathrm{Aut}(\mathcal{X})$ is finite, is given in \cite{MR4704757}.
\subsection{Weierstrass equations for Jacobian elliptic fibrations}
Turning to the area of concern for this paper,  we note that various geometric aspects for K3 surfaces $\mathcal{X}$ polarized by lattices $S =  H \oplus R(-1)$, with $R$ a direct sum of positive definite lattices of ADE type, have been studied in previous works of the first two authors \cite{MR4704757}. The general member in this class carries a special Jacobian elliptic fibration with the associated lattice embedding $H \hookrightarrow S$ as the left-side factor in $H \oplus R(-1)$. We refer to this elliptic fibration of frame $R$ as  \textbf{standard}\footnote{We note that multiple non-equivalent splittings $S =  H \oplus R(-1)$, with $R$ of ADE type, may exist. Therefore, the $S$-polarized surface may have multiple non-equivalent standard fibrations.}. These standard fibrations may be described explicitly via affine Weierstrass forms, that is, equations of type $y^2 = x^3 + f(t) x + g(t)$, where $f(t), g(t)$ are polynomials with prescribed properties (degrees, root multiplicities, etc), depending on the choice of $R$.
\par A special sub-case of the above is given by the situation when one has a canonical primitive embedding $ H \oplus N \hookrightarrow S $, where $N$ is the rank-ten Nikulin lattice\footnote{As defined, for instance, in Section 5 of \cite{MR728142}.}. In such a situation, the lattice embedding $H \hookrightarrow S$ corresponding to the left-side factor of $H \oplus N$ determines a second Jacobian elliptic fibration on the $S$-polarized K3 surfaces, which will be referred to as \textbf{alternate}. This fibration has the property that, in addition to the base section, it possesses a second section of order two with respect to the Mordell-Weil group. This type of fibration was studied in detail by van Geemen and Sarti in \cite{MR2274533}. The alternate fibration may again be described via an affine Weierstrass form $y^2 = x^3 + f_a(t) x + g_a(t)$. However, a more effective description is given by a slight modification -- the affine \textbf{van Geemen-Sarti form} $y^2= x^3 + A(t)  x^2 + B(t) x$, where here $A(t), B(t)$ are polynomials of degrees at most 4 and 8, respectively, with prescribed properties. Given a van Geemen-Sarti form, one may recover the Weierstrass form of the fibration via the formulas:
\beqn
\begin{split}
f_a(t) & =  \frac{1}{3} \left( 3 B(t) - A(t)^2 \right), \qquad 
g_a(t) =   \frac{1}{27} A(t) \left( 2 A(t)^2 - 9 B(t) \right), \\[0.2em]
&\Delta_a(t) \ = \ 4 f_a(t)^3 + 27 g_a(t)^2 \ = \ B(t)^2  \left( 4 B(t) - A(t)^2 \right). 
\end{split}
\eeqn
\par For this paper, we shall consider two further restrictions  of the above discussion. First, we shall assume that the polarizing lattice $S$ is of finite automorphism group type\footnote{This condition implies  that $\mathrm{Aut}(\mathcal{X})$  is a finite group, for any K3 surface $\mathcal{X}$ with $\mathrm{NS}(\mathcal{X}) = S$.}, in the sense of Nikulin \cites{MR3165023,MR633160b}.  Second we shall impose a condition on the rank of $S$, namely $13 \leqslant \rho_S \leqslant 18$. Lower rank cases will be discussed in subsequent works. 
\par The following result follows via arguments from \cite{Clingher:2022}. 
\begin{prop}
\label{prop:embedding}
Let $S$ be an even lattice of rank $\rho_S$ and signature $(1, \rho_S-1)$, satisfying the following conditions:
 \begin{enumerate}[label=(\alph*)]
\item $S$ admits a primitive embedding in the K3 lattice $\Lambda_{K3}$,
\item $S$ admits a direct sum decomposition $ H \oplus R(-1)$, with $R$ a positive definite lattice of ADE type,
\item there exists a primitive embedding $ H \oplus N \hookrightarrow S $, 
\item $S$ is of finite automorphism group type, in the sense of Nikulin \cites{MR3165023,MR633160b},
\item $13 \leqslant \rho_S \leqslant 18$. 
\end{enumerate} 
Then, $S$ is isomorphic to one of the lattices of the list below:
\beq
\label{eqn:lattices_body}
\begin{split}
 H \oplus E_8(-1)^{\oplus 2},  \ H \oplus E_8(-1)  \oplus E_7(-1),  \\
 H \oplus E_8(-1)  \oplus D_6(-1),   \ H \oplus E_8(-1)  \oplus D_4(-1) \oplus A_1(-1), \\  
 H \oplus E_8(-1)  \oplus A_1(-1)^{\oplus 4}, \ H \oplus D_8(-1)  \oplus D_4(-1), \ H \oplus E_7(-1)  \oplus A_1(-1)^{\oplus 4}.
\end{split}
\eeq
\end{prop}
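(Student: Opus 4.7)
The plan is to combine the Nikulin--Vinberg--Kondo classification of hyperbolic lattices of finite automorphism type with the structural restrictions coming from conditions (b) and (c). First I would record what (b) and (e) already give: $S = H \oplus R(-1)$ with $R$ a positive-definite ADE-type lattice of rank $\rho_S - 2 \in \{11,\dots,16\}$. By Nikulin's criterion for primitive embeddings into unimodular lattices \cite{MR752938}, condition (a) is then equivalent to the existence of an even lattice of signature $(2,19-\rho_S)$ whose discriminant form is the negative of that of $S$, which is a routine check once the ADE type of $R$ is fixed.

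Next I would apply (d). By Nikulin's theory \cite{MR633160} together with the enumerations of Vinberg \cite{MR2429266} and Kondo \cite{MR1029967}, the even hyperbolic lattices of rank $\geq 3$ whose Weyl group has finite index in $\mathrm{O}(S)$ form an explicit finite list in each rank. Intersecting this list with the candidates from Step~1 leaves only a short list of admissible $R$, each a direct sum of ADE components with tight constraints on the multiplicities of $A_n$, $D_n$, and $E_n$ summands.

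The decisive restriction is (c). A primitive embedding $H \oplus N \hookrightarrow S$ encodes an alternate Jacobian elliptic fibration on the generic $S$-polarized K3 surface whose Mordell--Weil group contains a two-torsion section. By the uniqueness results of Section~3 and \cite{Clingher:2022, MR4704757}, we may assume that the $H$-factor maps to the $H$-summand of $S$, so the condition reduces to $N \hookrightarrow R(-1)$ primitively. Realizing $N$ as an overlattice of $A_1(-1)^{\oplus 8}$ by a half-sum glue vector, this forces $R$ to contain eight mutually orthogonal $(-2)$-vectors whose half-sum is integral in the dual $R'$; a case-by-case analysis over the survivors of Step~2 then leaves exactly the seven lattices in (\ref{eqn:lattices_body}).

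The main obstacle will be the bookkeeping in Step~3: several ADE combinations that survive Steps~1 and~2 (for instance, configurations involving $E_6(-1)$ or $D_5(-1)$ summands) do contain eight disjoint $(-2)$-vectors yet still fail the glue-vector integrality condition, while certain candidates with many short $A_n$ chains pass (c) but are eliminated by (d). I would handle these exclusions by direct computation with the discriminant forms, relying on Nikulin's standard tables of ADE discriminants and on the explicit presentation of $N$ as a $\mathbb{Z}/2\mathbb{Z}$-overlattice of $A_1(-1)^{\oplus 8}$.
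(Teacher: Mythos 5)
The paper offers no internal proof of this proposition: it is stated as following ``via arguments from'' \cite{Clingher:2022}, so there is no argument in the text to compare yours against line by line. Your overall architecture --- start from the Nikulin--Vinberg--Kondo classification of hyperbolic $2$-elementary lattices of finite automorphism type, impose Nikulin's embedding criterion for (a), and then filter by the ADE splitting (b) and the Nikulin-lattice condition (c) --- is the natural strategy and is surely close in spirit to the cited source.

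There is, however, a genuine gap in your Step 3. You claim that ``we may assume the $H$-factor maps to the $H$-summand of $S$,'' thereby reducing condition (c) to a primitive embedding $N \hookrightarrow R(-1)$ into the standard ADE frame. This reduction is unjustified and, for these lattices, false: a primitive embedding of $H$ into $S$ splits off, but its orthogonal complement depends on the embedding and runs over all frames of $S$, which are genuinely non-isometric. Table~\ref{tab:lattices} makes this explicit --- for instance $H\oplus E_8(-1)\oplus A_1(-1)^{\oplus 4} \cong H\oplus E_7(-1)\oplus D_4(-1)\oplus A_1(-1) \cong H\oplus D_6(-1)^{\oplus 2}$, and the frame that actually accommodates $N$ is the \emph{alternate} one, with root lattice $D_8+4A_1$ and glue group $\mathbb{Z}/2\mathbb{Z}$, not the standard ADE frame at all. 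The correct reading of (c) is that $N$ embeds primitively into \emph{some} negative-definite $W$ with $H\oplus W\cong S$; running your case analysis only over the standard frame could both wrongly exclude candidates (a lattice may fail $N\hookrightarrow R(-1)$ yet admit $H\oplus N$ via another frame) and is not what Proposition~\ref{prop:frame} asserts, since that proposition presupposes rather than establishes the existence of the alternate fibration. A cleaner route is to apply Nikulin's embedding criterion directly to $H\oplus N\hookrightarrow S$ via discriminant forms. Two smaller slips: the orthogonal complement of $S$ in $\Lambda_{K3}$ has signature $(2,20-\rho_S)$, not $(2,19-\rho_S)$; and primitivity of the copy of $N$ requires the half-sum $\tfrac12\sum n_i$ to lie in $W$ itself with the resulting sublattice saturated, which is stronger than integrality in the dual lattice.
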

\begin{rem} 
\label{rem:lattices}
The following observations regarding the lattices in (\ref{eqn:lattices_body}) are important:
\begin{itemize}
\item [(a)] All lattices involved are of 2-elementary type.\footnote{For a lattice $S$, we denote by $D(S) = S^\vee/S$ the associated discriminant group. $S$ is then called 2-elementary if $D(S)$ is a 2-elementary Abelian group, i.e., $D(S) \cong (\mathbb{Z}/2\mathbb{Z})^\ell$ for $\ell \in \mathbb{N}$.}
\item [(b)] There is a unique instance for each rank, with the exception of rank $14$, where one has two non-isomorphic possibilities. 
\item [(c)] The lattices in (\ref{eqn:lattices_body}) form a nested sequence, in the sense that each lattice has a natural embedding in the one of consecutive higher rank, with the exception of $H \oplus D_8(-1)  \oplus D_4(-1)$ which does not admit an embedding in $ H \oplus E_8(-1)  \oplus D_4(-1) \oplus A_1(-1)$. 
\item [(d)] The embeddings of (c) are unique, up to an isometry of the larger lattice. 
\end{itemize} 
\end{rem} 
We also note that, via classical results by Nikulin~\cites{MR633160, MR556762, MR752938}, Vinberg~\cite{MR2429266}, and Kondo~\cite{MR1029967}, one obtains:
\begin{prop}
\label{prop:finite_autos}
 Let $\mathcal{X}$ be a K3 surface with Picard rank $13 \leqslant \rho_X \leqslant 18$. Then the automorphism group $\mathrm{Aut}(\mathcal{X}) $ is isomorphic to $(\mathbb{Z}/2\mathbb{Z})^2$  if and only if the Neron-Severi lattice $\mathrm{NS}(\mathcal{X})$ is isomorphic to one of the lattices in~(\ref{eqn:lattices_body}). 
 \end{prop}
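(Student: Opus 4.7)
The plan is to combine the Nikulin-Vinberg-Kondo classification of K3 surfaces with finite automorphism group with an explicit computation of $\mathrm{Aut}(\mathcal{X})$ from $\mathrm{NS}(\mathcal{X})$. The statement splits into two implications, which I would handle asymmetrically.

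For the forward direction, I first note that $(\mathbb{Z}/2\mathbb{Z})^2$ is finite, so by the theorem of Kondo \cite{MR1029967} (building on earlier work of Nikulin \cites{MR633160, MR556762, MR752938} and Vinberg \cite{MR2429266}) the lattice $\mathrm{NS}(\mathcal{X})$ must belong to a known finite list of hyperbolic lattices. Restricting to Picard rank $13 \leq \rho_S \leq 18$ yields a manageable set of candidates to sift. For each such candidate $S$, I would compute $\mathrm{Aut}(\mathcal{X})$ via the standard dictionary: $\mathrm{Aut}(\mathcal{X})$ acts faithfully on $\mathrm{NS}(\mathcal{X})$ and is identified (up to a finite kernel) with the quotient of $O^+(S)$ by the Weyl group $W(S)$ generated by reflections in $(-2)$-vectors, subject to preserving the ample cone and lifting compatibly to $\Lambda_{K3}$. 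Since the quotient is finite precisely when Vinberg's algorithm terminates, and the resulting symmetry group of the fundamental chamber is tabulated in Kondo's classification, I would extract those $S$ whose chamber symmetry has order exactly $4$ --- these should be the seven lattices in (\ref{eqn:lattices_body}), including the two non-isomorphic rank-$14$ possibilities noted in Remark \ref{rem:lattices}(b).

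For the backward direction, I would verify directly that each $S$ in (\ref{eqn:lattices_body}) produces $\mathrm{Aut}(\mathcal{X}) \cong (\mathbb{Z}/2\mathbb{Z})^2$ by exhibiting two commuting involutions. A non-symplectic involution $\iota$ exists by Nikulin's theorem on 2-elementary K3 surfaces \cite{MR633160}, applicable because all seven lattices are 2-elementary (Remark \ref{rem:lattices}(a)). A commuting symplectic van Geemen-Sarti involution $\tau$ arises from the $2$-torsion section of the alternate elliptic fibration guaranteed by the primitive embedding $H \oplus N \hookrightarrow S$ appearing in Proposition \ref{prop:embedding}(c); compatibility of the two involutions follows because $\iota$ preserves the elliptic fibration and its torsion structure. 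The group $\langle \iota, \tau \rangle \cong (\mathbb{Z}/2\mathbb{Z})^2$ thus embeds into $\mathrm{Aut}(\mathcal{X})$, and the lattice calculation of the previous paragraph gives $|\mathrm{Aut}(\mathcal{X})| = 4$, completing the argument.

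The principal obstacle is the chamber-symmetry computation in the forward direction, particularly for the rank-$13$ case $H \oplus E_7(-1) \oplus A_1(-1)^{\oplus 4}$, where the Vinberg-Coxeter diagram is largest and spurious symmetries must be carefully excluded; the rank-$14$ case with two non-isomorphic candidates also requires attention to show each yields exactly the group $(\mathbb{Z}/2\mathbb{Z})^2$. Once the chamber symmetries are pinned down from the Vinberg-Kondo tables, the compatibility of these lattice-theoretic symmetries with the geometric involutions $\iota$ and $\tau$ closes the classification.
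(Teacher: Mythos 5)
The paper offers no argument for this proposition beyond the citation of Nikulin, Vinberg and Kondo, so your overall route --- intersect the Nikulin--Vinberg list of N\'eron--Severi lattices of K3 surfaces with finite automorphism group in ranks $13$ through $18$ with Kondo's computation of the groups themselves, and realize $(\mathbb{Z}/2\mathbb{Z})^2$ geometrically by the non-symplectic involution supplied by $2$-elementarity together with the van Geemen--Sarti involution from the $2$-torsion section --- is in substance the intended one, and your backward direction is a useful addition since it exhibits the group rather than merely quoting it.

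One step of your sifting criterion is wrong as written. The group $\mathrm{Aut}(\mathcal{X})$ does \emph{not} act faithfully on $\mathrm{NS}(\mathcal{X})$ here: precisely because every $S$ in (\ref{eqn:lattices_body}) is $2$-elementary, the isometry $\mathrm{id}_S \oplus (-\mathrm{id}_T)$ of $\Lambda_{K3}$ preserves the period line and the ample cone and is therefore effective by Torelli; that is, Nikulin's canonical non-symplectic involution acts as the identity on $\mathrm{NS}(\mathcal{X})$ and hence lies in the kernel of $\mathrm{Aut}(\mathcal{X}) \to \mathrm{O}(\mathrm{NS}(\mathcal{X}))$. For the general member that kernel is exactly $\mathbb{Z}/2\mathbb{Z}$, so the condition to extract from the chamber-symmetry computation is that the subgroup of chamber symmetries realized by automorphisms (those acting trivially on the discriminant group) has order $2$, not $4$; the missing factor of $2$ is supplied by the kernel. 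Read literally, ``chamber symmetry of order exactly $4$'' would select the wrong lattices. Since Kondo tabulates $\mathrm{Aut}(\mathcal{X})$ itself rather than the chamber symmetry, the sift still lands on the seven lattices once this is corrected, but the criterion should be fixed. Finally, the upper bound $|\mathrm{Aut}(\mathcal{X})| = 4$ holds only for the \emph{general} member of each family: special periods whose transcendental lattice carries, say, a $\mathbb{Z}[\zeta_3]$-structure fixing the period line acquire extra non-symplectic automorphisms without changing $\mathrm{NS}(\mathcal{X})$. This genericity hypothesis is implicit in the proposition and in the cited classification, and your argument inherits it.
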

The following observation, which follows from \cite{Clingher:2022}*{Theorem 2.3}, is important for the considerations of this paper:
\begin{prop}
\label{prop:frame}
Let $S$ be one of the lattices in~(\ref{eqn:lattices_body}) and assume that $\mathcal{X}$ is a general $S$-polarized K3 surface. Then, there is only one frame on $\mathcal{X}$ with $\mathpzc{W}=\mathbb{Z}/2\mathbb{Z}$, and the multiplicity of this frame is always 1.
\end{prop}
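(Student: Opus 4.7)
The plan is to combine the correspondence between primitive embeddings $H \hookrightarrow \mathrm{NS}(\mathcal{X})$ and Jacobian elliptic fibrations on $\mathcal{X}$, recalled in Section 3.1, with the classification of frames for K3 surfaces of finite automorphism group type carried out in \cite{MR4704757} and \cite{Clingher:2022}. For a general $S$-polarized K3 surface $\mathcal{X}$ one has $\mathrm{NS}(\mathcal{X}) = S$, so enumerating frames reduces to a purely lattice-theoretic problem on $S$ alone.

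First I would set up the enumeration: for each of the seven lattices $S$ in (\ref{eqn:lattices_body}), I would list the $O(S)$-orbits of primitive embeddings $j \colon H \hookrightarrow S$ whose image may be arranged to contain a quasi-ample class. For each such orbit the frame $R(-1) = j(H)^{\perp}$ is determined, and using Nikulin's gluing formulas one reads off the root sublattice $R^{\mathrm{root}}$ together with the quotient $\mathpzc{W} = R / R^{\mathrm{root}}$, which is realized as an isotropic subgroup of $D(R^{\mathrm{root}})$ and coincides with the Mordell--Weil group of the fibration. From this tabulation I would select those orbits with $\mathpzc{W} = \mathbb{Z}/2\mathbb{Z}$.

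Existence of at least one such frame is automatic from condition~(c) of Proposition~\ref{prop:embedding}: the primitive embedding $H \oplus N \hookrightarrow S$, with $N$ the rank-ten Nikulin lattice, produces the alternate van Geemen--Sarti fibration of Section 3.2, whose Mordell--Weil group is $\mathbb{Z}/2\mathbb{Z}$ by construction. The content of the proposition is uniqueness together with multiplicity one. For uniqueness I would proceed case-by-case through the seven lattices: the classification tables of \cite{MR4704757} list every possible pair $(R^{\mathrm{root}}, \mathpzc{W})$ realized on each $S$, and a direct inspection shows that in every case exactly one entry has $\mathpzc{W} = \mathbb{Z}/2\mathbb{Z}$. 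This is essentially what \cite{Clingher:2022}*{Theorem 2.3} provides, and one may alternatively derive it from the uniqueness (up to $O(S)$) of the Nikulin embedding $H \oplus N \hookrightarrow S$, which in turn is a consequence of the fact that every $S$ in (\ref{eqn:lattices_body}) is 2-elementary (Remark~\ref{rem:lattices}(a)).

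For the multiplicity statement I would invoke the Festi--Veniani double-coset formula of \cite{FestiVeniani20}: the number of non-isomorphic fibrations of a fixed frame type equals the order of $O(\mathrm{T}_\mathcal{X}) \backslash O(q_S) / \mathrm{im}\bigl(O(R)\bigr)$. Using again the 2-elementary nature of $S$, the discriminant group $D(S)$ is an $\mathbb{F}_2$-vector space and $O(q_S)$ is correspondingly small; for a general $\mathcal{X}$, the action of $O(\mathrm{T}_\mathcal{X})$ on $D(S) \cong D(\mathrm{T}_\mathcal{X})$ already surjects onto $O(q_S)$, so the double coset collapses to a single element. The main obstacle I foresee is the combinatorial bookkeeping in the rank-13 and rank-14 cases, where $R^{\mathrm{root}}$ has the richest discriminant group and the most candidate overlattices to rule out; however, all such verifications are deterministic and amount to reading off the tables in \cite{MR4704757} and checking a single isotropic-subgroup condition in each line.
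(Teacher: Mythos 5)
The paper offers no argument for this proposition beyond the citation of \cite{Clingher:2022}*{Theorem 2.3}, so there is no in-text proof to compare against; what you have written is a reasonable reconstruction of the lattice-theoretic computation that the cited reference carries out. Your reduction to $\mathrm{NS}(\mathcal{X})=S$ for a general member, the enumeration of $O(S)$-orbits of primitive embeddings $H\hookrightarrow S$ with the frame read off via Nikulin gluing, and the appeal to the Festi--Veniani double coset for the multiplicity are all consistent with the framework the paper sets up, and the existence and uniqueness parts of your sketch defer to the same sources the paper does.

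There is, however, one concrete misstep in your multiplicity argument. In the double-coset formula of \cite{FestiVeniani20}, the group acting on the left is not the full orthogonal group $O(\mathrm{T}_\mathcal{X})$ of the abstract transcendental lattice but the group of (effective) Hodge isometries of $\mathrm{T}_\mathcal{X}$; for a \emph{general} member of the family this group is only $\{\pm\mathrm{id}\}$, and $-\mathrm{id}$ acts trivially on a $2$-elementary discriminant group (since $-x=x$ in $(\mathbb{Z}/2\mathbb{Z})^\ell$). Nikulin's surjectivity of $O(T)\to O(q_T)$ for indefinite $2$-elementary lattices therefore applies to the wrong group and cannot be used to collapse the double coset from the left. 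The collapse must instead come from the right-hand factor: one needs that $O(R)\to O(q_R)$ is surjective for each of the frames $R$ with $\mathpzc{W}=\mathbb{Z}/2\mathbb{Z}$ listed in Table~\ref{tab:lattices}, and that each such frame genus contains a single isometry class. Both facts do hold for these particular positive definite frames and are what \cite{MR4704757} and \cite{Clingher:2022} verify, but as written your argument establishes surjectivity for the wrong group and so does not by itself yield multiplicity one.
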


\begin{rem} 
\label{rem:lattices1}
Re-framing Proposition \ref{prop:frame} within the standard/alternate terminology established above, we conclude that, for a general  $S$-polarized K3 surface $\mathcal{X}$, with $S$ in (\ref{eqn:lattices_body}),  while multiple non-isomorphic elliptic Jacobian fibrations with standard frame $R_{\text{std}}$ may occur, one always has a unique (canonical) alternate fibration with root lattice of the frame $R^{\text{root}}_{\text{alt}}$ and $\mathpzc{W}_{\text{alt}}=\mathbb{Z}/2\mathbb{Z}$. All possible standard/alternate frames\tablefootnote{Multiplicities greater than 1 may occur for some standard frames, as detailed in \cite{MR4704757}.} are detailed in in Table~\ref{tab:lattices}.
\end{rem} 
\begin{table}[!th]
\begin{center}
\begin{tabular}{c|c|cc}
$\rho_S$ &  $S \simeq H \oplus R_{\text{std}}(-1)$ & $R^{\text{root}}_{\text{alt}}$ &  $\mathpzc{W}_{\text{alt}}$ 	\\
\hline
\hline
$18$ 	& $H \oplus E_8(-1)  \oplus E_8(-1)$ & $D_{16}$    & $\mathbb{Z}/2\mathbb{Z}$\\
\hline
$17$ 	& $H \oplus E_8(-1)  \oplus E_7(-1)$ 			   & $D_{14} + A_1$ 	& $\mathbb{Z}/2\mathbb{Z}$\\
\hline
$16$	& $H \oplus E_7(-1) \oplus E_7(-1)$                & $D_{12} + 2 A_1$ 	&$\mathbb{Z}/2\mathbb{Z}$\\
        & $\cong H \oplus E_8(-1)  \oplus D_6(-1)$ &&\\		
        & $\cong H \oplus D_{14}(-1)$ &&\\		
\hline        
$15$	& $H \oplus E_8(-1)  \oplus D_4(-1) \oplus A_1(-1)$ & $D_{10} + 3 A_1$	&$\mathbb{Z}/2\mathbb{Z}$ \\
        & $\cong H \oplus E_7(-1) \oplus D_6(-1)$ &&\\	    
        & $\cong H \oplus D_{12}(-1) \oplus A_1(-1)$ &&\\		
\hline
$14$    & $H\oplus E_8(-1) \oplus A_1(-1)^{\oplus 4}$ 		& $D_8 + 4 A_1$ 	& $\mathbb{Z}/2\mathbb{Z}$\\			
		& $\cong H \oplus E_7(-1) \oplus D_4(-1) \oplus A_1(-1)$ &&\\
        & $\cong H \oplus D_{10}(-1) \oplus A_1(-1)^{\oplus 2}$ &&\\
		& $\cong H \oplus D_{6}(-1)^{\oplus 2}$ &&\\
\hline
$14$ 	& $H\oplus D_8(-1) \oplus D_4(-1)$ 		             & $E_7 + 5 A_1$ 	& $\mathbb{Z}/2\mathbb{Z}$\\
\hline
$13$ 	& $H\oplus E_7(-1) \oplus A_1(-1)^{\oplus 4}$ 	     & $D_6 + 5 A_1$ 	& $\mathbb{Z}/2\mathbb{Z}$\\
        & $\cong H \oplus D_8(-1)  \oplus A_1(-1)^{\oplus 3}$&&\\
		& $\cong H \oplus D_6(-1)  \oplus D_4(-1) \oplus A_1(-1)$ &&\\
\hline  
\end{tabular}
\end{center}
\captionsetup{justification=centering}
\caption{Lattices in Remark~\ref{rem:lattices}}
\label{tab:lattices}
\end{table}
\subsection{Van Geemen-Sarti forms}
The canonical alternate fibrations $\pi_\mathcal{X}\colon  \mathcal{X} \rightarrow \mathbb{P}^1$ of Proposition \ref{prop:frame} may be described explicitly. We shall give these descriptions here and construct the corresponding van Geemen-Sarti form
\beq
\label{eqn:vgs}
  \overline{\mathcal{X}}\colon \quad Y^2 Z  = X^3 + A(u, v) \, X^2 Z +  B(u, v) \, X Z^2,
 \eeq
with elliptic fibers in $\mathbb{P}^2 = \mathbb{P}(X, Y, Z)$ varying over $\mathbb{P}^1=\mathbb{P}(u, v)$, for the subset of lattices with Picard rank 13 and higher:
\begin{prop}
\label{prop:weierstrass}
For $S$ in~(\ref{eqn:lattices}) with $\rho_S=10+k$ for $4 \le k \le 8$ and $L = S^{\perp}_{\Lambda_{K3}}$, the defining polynomials in Equation~(\ref{eqn:vgs}) can be arranged to have the form 
\beq
\label{eqn:coeffs_a}
 A = \big(u^3 + a_4 u v^2 + a_6 v^3\Big) v,  \qquad  B 
 = \Big(b_{2(k-2)} u^{8-k} + \dots + b_{12} v^{8-k}\big) v^k
 = \sum_{l=0}^{8-k} b_{2(6-l)} u^{l} v^{8-l}
\eeq  
for $L = H^{\oplus 2} \oplus A_1(-1)^{\oplus 8-k}$, and
\beq
\label{eqn:coeffs_b}
 A = 2\Big(c_2 u^2 + c_6 u v  + c_{10} v^2\Big)v^2,  \qquad  B = \Big(u^5 + d_8 u^3 v^2 + d_{12} u^2v^3 + d_{16} u v^4 + d_{20} v^5\big)v^3
\eeq  
for $L = H \oplus H(2) \oplus D_4(-1)$. The coefficients in Equation~(\ref{eqn:coeffs_a}) satisfy $( b_{2(k-2)}, \dots ,   b_{12} )  \neq  0$ for $k=5, 6, 7, 8$ and
\beq
\label{eqn:locus_sing_a}
 \Big(\,  
 2 a_4 b_4 + b_8, \ 
 a_4 b_6 + 3 b_{10}, \ 
 a_4 b_6 - 6 a_6 b_4, \ 
 a_4 b_8 + 18 b_{12},  \ 
 8 b_4 b_8 - 3 b_6^2, \
 4 a_4 b_8 - 9 a_6 b_6 \,  \Big)  \  \neq \  0 
\eeq
for $k=4$, and the coefficients in Equation~(\ref{eqn:coeffs_b}) satisfy 
\beq
\label{eqn:locus_sing_b}
\begin{split}
    \Big( \, 
    c_2 d_8 + 10 c_{10}, \
    4 c_2 c_{10} - c_6^2, \
    c_2 d_{12} - c_6 d_8, \
    3 d_8^2 + 20 d_{16},  \ & \\
    8 c_2 d_{16} - 3 c_6 d_{12}, \
    d_8 d_{12} + 50 d_{20},  \
    8 d_8 d_{16} -3 d_{12}^2    
    \, \Big)  &  \ \neq  \  0.
\end{split}    
\eeq
\end{prop}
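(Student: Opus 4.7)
The plan is to combine the existence and uniqueness of the alternate fibration from Proposition~\ref{prop:frame} with a Tate-algorithm analysis at the special fiber, and then enumerate possible further degenerations to identify the non-vanishing loci. First, by Proposition~\ref{prop:frame} the general $S$-polarized K3 surface $\mathcal{X}$ carries a unique Jacobian elliptic fibration with Mordell-Weil group $\mathbb{Z}/2\mathbb{Z}$ and frame root lattice $R^{\text{root}}_{\text{alt}}$ as recorded in Table~\ref{tab:lattices}. Translating the 2-torsion section to $X = 0$ puts the Weierstrass equation in the van Geemen-Sarti form~(\ref{eqn:vgs}) with $A, B$ homogeneous polynomials in $(u, v)$ of degrees $4$ and $8$ respectively; these degree bounds follow from the K3 condition on the total space.

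Next, I would read off the local data at the fiber of largest Euler characteristic in $R^{\text{root}}_{\text{alt}}$, which we place at $v = 0$. Using the discriminant $\Delta = B^2(4B - A^2)$ of the van Geemen-Sarti form together with the standard Weierstrass coefficients $f = B - A^2/3$ and $g = A(2A^2 - 9B)/27$, Tate's algorithm pins down the local valuations: a fiber of Kodaira type $I^*_{2k-4}$ requires $v_0(A) = 1$, $v_0(B) = k$, and $v_0(4B - A^2) = 2$, producing $v_0(\Delta) = 2k + 2$; the type $III^*$ for $S = H \oplus D_8(-1) \oplus D_4(-1)$ requires $v_0(A) = 2$, $v_0(B) = 3$, and $v_0(4B - A^2) = 3$, producing $v_0(\Delta) = 9$. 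These conditions force precisely the overall factors $v$ and $v^k$ (resp.~$v^2$ and $v^3$) visible in Equations~(\ref{eqn:coeffs_a}) and~(\ref{eqn:coeffs_b}). The residual polynomials inside the parentheses then carry arbitrary coefficients of the prescribed degrees, and I would normalize them using the group of admissible changes of variables: a translation $u \mapsto u + \lambda v$ of the affine base, rescalings $(u, v) \mapsto (\alpha u, \beta v)$, and the Weierstrass rescaling $(X, Y) \mapsto (\nu^2 X, \nu^3 Y)$, which acts on the coefficients as $(A, B) \mapsto (\nu^2 A, \nu^4 B)$. A judicious choice eliminates the $u^2 v$ term of the factor defining $A$ via translation and normalizes the leading coefficient to $1$ (resp.~$2$ in the second case) via rescaling; the residual one-dimensional $\mathbb{C}^*$ action is precisely the grading under which the remaining coefficients transform as modular forms of their tabulated weights.

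Finally, the non-vanishing loci~(\ref{eqn:locus_sing_a}) and~(\ref{eqn:locus_sing_b}) must be identified with the complement of the locus where a further Noether-Lefschetz-type degeneration occurs. For $k = 5, 6, 7, 8$ this is immediate: the tuple condition merely asserts $B \not\equiv 0$, without which the model fails to define an elliptic K3 surface. The main obstacle, in my view, is the analogous statement for $k = 4$ and for the $D_8 \oplus D_4$ case, where each listed polynomial should arise, via a second application of Tate's algorithm to a perturbed local model, as the exact obstruction preventing the fiber at $v = 0$ from jumping to a worse Kodaira type (such as $I^*_5$ or $IV^*$), or preventing isolated singular fibers elsewhere from coalescing into a further reducible fiber; either phenomenon would enlarge $R^{\text{root}}_{\text{alt}}$ and contradict the $S$-polarization assumption. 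I would enumerate the possible worse Kodaira types compatible with the prescribed valuations and verify by direct computation of the leading Taylor coefficients of $A$, $B$, and $4B - A^2$ that the resulting obstructions exhaust exactly the listed tuples.
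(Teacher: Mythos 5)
Your first two paragraphs essentially reproduce the paper's argument: uniqueness of the alternate fibration from Proposition~\ref{prop:frame}, placing the $I^*_{2(k-2)}$ (resp.\ $III^*$) fiber at $v=0$, forcing the $v$-divisibility of $A$ and $B$ from the local valuations (the paper quotes the full configuration $I^*_{2(k-2)}+(8-k)I_2+6I_1$, resp.\ $III^*+5I_2+5I_1$, from the earlier work it cites, and rules out $a_0=0$ by exhibiting a $(4,6,12)$-point at $v=0$; your Tate-algorithm reading of the valuations accomplishes the same thing), followed by the shift in $u$ and the rescalings. Up to that point the two arguments agree, and your treatment of $k\ge 5$ (the condition is just $B\not\equiv 0$) is also correct.

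The gap is in your final step. The tuples~(\ref{eqn:locus_sing_a}) and~(\ref{eqn:locus_sing_b}) are \emph{not} the obstruction to a Noether--Lefschetz-type enhancement of the frame. A jump of the fiber at $v=0$ to a worse Kodaira type, or a coalescence of $I_1$/$I_2$ fibers elsewhere, enlarges $\mathrm{NS}(\mathcal{X})$ but still produces an $S$-polarized K3 surface; such points lie inside the moduli space and are not excluded by the stated conditions. Concretely, for $k=4$ setting $b_4=0$ degenerates the fiber at $v=0$ from $I^*_4$ to $I^*_6$ --- this is exactly the passage to the rank-$15$ family of Corollary~\ref{cor:recursiveI} --- yet the tuple~(\ref{eqn:locus_sing_a}) remains nonzero there, so your proposed characterization cannot reproduce it. What the conditions actually exclude is the locus where Equation~(\ref{eqn:vgs}) fails to define a K3 surface at all, namely where a $(4,6,12)$-point (a non-minimal point of the Weierstrass model, i.e.\ a singularity worse than a rational double point) occurs at some $v\neq 0$. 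For $k=4$ this happens precisely when $A=(u+\beta v)^2(u-2\beta v)v$ and $B=b_4(u+\beta v)^4v^4$; substituting $a_4=-3\beta^2$, $a_6=-2\beta^3$, $b_6=4\beta b_4,\dots,b_{12}=\beta^4 b_4$ kills all six listed expressions, and the tuple is obtained by eliminating $\beta$ (via a Gr\"obner basis, intersected with the ideal $b_4=\dots=b_{12}=0$ since $B\equiv 0$ must also be excluded). The $H\oplus D_8(-1)\oplus D_4(-1)$ case is analogous with $A=2c_2(u+\beta v)^2v^2$ and $B=(u-4\beta v)(u+\beta v)^4v^3$. To repair the proof you must replace the ``worse Kodaira type / larger frame'' interpretation by the non-minimality one and carry out this elimination.
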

\begin{proof}
Proposition~\ref{prop:frame} shows that a general $S$-polarized K3 surface $\mathcal{X}$ has a Jacobian elliptic fibration $\pi_\mathcal{X}\colon  \mathcal{X} \rightarrow \mathbb{P}^1$ with $\operatorname{MW}(\mathcal{X}, \pi_\mathcal{X})\cong \mathbb{Z}/2\mathbb{Z}$. This yields Equation~(\ref{eqn:vgs}), if we move the zero-section to $[X: Y: Z] = [0:1:0]$ and the 2-torsion section to $[X: Y: Z] = [0:0:1]$.
\par In the first case, it was proved in \cite{Clingher:2022} that the elliptic fibration has the singular fibers $I^*_{2(k-2)} + (8-k) I_2+6 I_1$. The van Geemen-Sarti form~(\ref{eqn:vgs}) with a singular fiber of type $I^*_{2(k-2)}$ at $v=0$ has $A=(a_0 u^3 + \dots + a_6 v^3)v$ and $B=(b_{2(k-2)} u^{8-k} + \dots + b_{12} v^{8-k})v^k$. It follows that $B \neq 0$ since otherwise the surface has non-isolated singularities. Moreover, in order for the elliptic surface to have at worst a rational double point singularity at $v=0$, we must have $a_0 \neq 0$. This can be seen by converting Equation~(\ref{eqn:vgs}) into its standard Weierstrass form
\beq
\label{eqn:vgs_a_prelim_weq}
 y^2 z = x^3 - \frac{1}{3} \Big( A(u, v)^2 - 3  B(u, v) \Big) \, xz^2 + \frac{1}{27}  A(u, v) \Big(2 A(u, v)^2 - 9  B(u, v)\Big) \, z^3.
\eeq 
For $a_0=0$ we obtain a $(4,6, 12)$-point at $v=0$ whence a non-minimal Weierstrass equation. Thus, we must have $a_0 \neq 0$. For $a_0 \neq 0$ the coefficient $a_0$ can be rescaled to equal one, and $a_2$ can be subsequently eliminated by a shift in $u$. We then obtain Equation~(\ref{eqn:vgs}) with defining polynomials given by Equation~(\ref{eqn:coeffs_a}). Moreover, any $(4,6, 12)$-point with $v \neq 0$ requires $A = (u+\beta v)^2 (u -2 \beta v) v$ and $B = b_4 ( u + \beta v)^4 v^4$ for some $\beta \in\mathbb{C}$ and $b_4 \neq 0$. This is only possible for $k\le 4$, and Equation~(\ref{eqn:locus_sing_a}) follows by computing a Gr\"obner basis for the intersection of the corresponding elimination ideal and the ideal generated by $b_4 = \dots = b_{12}=0$. The second case is analogous: the singular fibers are $III^*+ 5 I_2+5 I_1$; see \cite{Clingher:2024}. We then moved the fiber of type $III^*$ to $v=0$. Here, a $(4,6, 12)$-point with $v\neq 0$ requires $A = 2c_2 (u+\beta v)^2v^2$ and $B = (u - 4 \beta v) ( u + \beta v)^4v^3$. The left side of Equation~(\ref{eqn:locus_sing_b}) is obtained as Gr\"obner basis of the corresponding elimination ideal.
\end{proof}
The following is immediate:
\begin{cor}
\label{cor:uniqueness}
The coefficients of the alternate fibrations determined in Proposition~\ref{prop:weierstrass} are unique, up to a rescaling $(a_m, b_n) \mapsto (\lambda^m a_m, \lambda^n b_n)$ and $(c_m, d_n) \mapsto (\lambda^m c_m, \lambda^n d_n)$, respectively, with $\lambda \in \mathbb{C}^\times$. 
\end{cor}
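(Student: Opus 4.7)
The plan is to combine the uniqueness of the alternate fibration (Proposition \ref{prop:frame}) with a direct analysis of the residual freedom in the normalizations already performed during the proof of Proposition \ref{prop:weierstrass}.

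First I would argue that if two van Geemen-Sarti forms of the type (\ref{eqn:vgs}) represent the same $S$-polarized K3 surface $\mathcal{X}$, then by Proposition \ref{prop:frame} both describe the unique canonical alternate fibration up to an automorphism of $\mathcal{X}$. Consequently the two forms must be related by a change of coordinates on $\mathbb{P}(X,Y,Z)\times\mathbb{P}(u,v)$ that (i) preserves the Weierstrass shape of (\ref{eqn:vgs}), (ii) fixes the zero section $[X:Y:Z]=[0:1:0]$ and the 2-torsion section $[0:0:1]$, and (iii) preserves the distinguished singular fiber (of type $I^*_{2(k-2)}$ or $III^*$) at $v=0$.

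Next I would classify such admissible transformations. On each fiber, the Weierstrass automorphisms preserving $y^2=x^3+Ax^2+Bx$ together with the zero section at infinity and the 2-torsion at $x=0$ are precisely $(x,y)\mapsto(\mu^2 x,\mu^3 y)$ for $\mu\in\mathbb{C}^\times$, under which $A\mapsto\mu^{-2}A$ and $B\mapsto\mu^{-4}B$. Projective automorphisms of the base $\mathbb{P}^1(u,v)$ fixing the point $v=0$ form the affine group $(u,v)\mapsto(\alpha u+\beta v,\delta v)$ modulo the diagonal rescaling. I would also note that no shift $x\mapsto x+c$ is admissible, since it would translate the 2-torsion locus $x=0$, and that only the $v=0$ point of $\mathbb{P}^1$ can be the center of an $I^*_{2(k-2)}$ or $III^*$ fiber.

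The final step is to account for the normalizations already imposed in the proof of Proposition \ref{prop:weierstrass}. In case (\ref{eqn:coeffs_a}), the coefficient of $u^3v$ in $A$ has been set to $1$, and the coefficient of $u^2v^2$ in $A$ has been killed by a translation in $u$; these two conditions determine $\beta$ and tie $\mu^2$ to the scaling $\alpha/\delta$, leaving a single $\mathbb{C}^\times$-family. Tracking the action of this residual $\mathbb{C}^\times$ on the homogeneous polynomials $A$ and $B$ degree by degree yields $a_m\mapsto\lambda^m a_m$ and $b_n\mapsto\lambda^n b_n$ with $\lambda$ determined by $\lambda^2=\alpha/\delta$. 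Case (\ref{eqn:coeffs_b}) is treated identically, with the normalized leading coefficient of $B$ and the vanishing $u^4v^4$ coefficient of $B$ playing the analogous roles, producing $(c_m,d_n)\mapsto(\lambda^m c_m,\lambda^n d_n)$.

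I do not anticipate any genuine obstacle: once the stabilizer of the normal form has been identified, the corollary reduces to a short bookkeeping calculation, which is why the statement is labeled as immediate. The only point requiring care is the verification that no further hidden symmetry arises from translations in $x$ or from Möbius transformations of the base not fixing $v=0$; both are ruled out as indicated above.
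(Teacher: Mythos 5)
Your proposal is correct and follows essentially the same route as the paper: invoke Proposition~\ref{prop:frame} for uniqueness of the fibration, fix the distinguished $I^*_{2(k-2)}$ or $III^*$ fiber at $v=0$, and observe that after the normalizations of Proposition~\ref{prop:weierstrass} the stabilizer of the normal form is the one-parameter rescaling, whose action on the coefficients gives the stated weights. The paper's proof is just a terser version of the same bookkeeping.
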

\begin{proof}
Proposition~\ref{prop:frame} proves that the Jacobian elliptic fibration is unique. Among the singular fibers there is a unique fiber of type $I^*_{2(k-2)}$ or $III^*$, respectively, that has been moved to $v=0$. After the subsequent shift in $u$, the only remaining holomorphic coordinate change which preserves the van Geemen-Sarti form~(\ref{eqn:vgs}) and the leading coefficient of $A$ in the first case or $B$ in the second case, in the affine chart $Z=v=1$, is the rescaling $(u, X, Y) \mapsto (\lambda^2 u, \lambda^6 X, \lambda^9 Y)$ or $(u, X, Y) \mapsto (\lambda^4 u, \lambda^{10} X, \lambda^{15} Y)$ in the second case. One then checks that this leads to the stated rescalings of the coefficients.
\end{proof}
\begin{cor}
\label{cor:recursiveI}
Setting $b_{2(k-2)}=0$ in the van Geemen-Sarti form~(\ref{eqn:vgs}) with coefficients in~(\ref{eqn:coeffs_a}), the alternate fibration for the polarizing lattice $S'$ with $L' = H^{\oplus 2} \oplus A_1(-1)^{\oplus 7-k}$ is attained. Moreover, this is the only way to obtain the alternate equation for $S'$.
\end{cor}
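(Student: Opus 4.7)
The plan is to split the statement into existence (setting $b_{2(k-2)}=0$ realizes the $S'$-alternate equation) and uniqueness (no other specialization of $(a_m,b_n)$ does). Existence reduces to a direct polynomial comparison between \eqref{eqn:coeffs_a} for $k$ and for $k'=k+1$, while uniqueness combines Proposition~\ref{prop:frame} with Corollary~\ref{cor:uniqueness}.

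For existence, substitute $b_{2(k-2)}=0$ into \eqref{eqn:coeffs_a}. The polynomial $B$ then factors as $v^{k+1}\bigl(b_{2(k-1)}u^{7-k}+\cdots+b_{12}v^{7-k}\bigr)$, while $A$ is unchanged. This is precisely the shape of \eqref{eqn:coeffs_a} with $k$ replaced by $k'=k+1$ (so that $8-k'=7-k$), and for generic choices of the remaining coefficients the non-degeneracy hypothesis $(b_{2(k-1)},\ldots,b_{12})\neq 0$ of Proposition~\ref{prop:weierstrass} holds. A short Kodaira check via $\Delta_a=B^2(4B-A^2)$ confirms the frame: $\operatorname{ord}_{v=0}\Delta_a$ jumps from $2k+2$ to $2k+4$ (with $\operatorname{ord}_{v=0}f_a=2$, $\operatorname{ord}_{v=0}g_a=3$ unchanged), so the fiber at $v=0$ is promoted from $I^*_{2(k-2)}$ to $I^*_{2(k-1)}$, in agreement with the $S'$-alternate root lattice $D_{2k+2}+(7-k)A_1$ recorded in Table~\ref{tab:lattices}.

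For uniqueness, suppose that specializing $(a_m,b_n)$ within the $S$-alternate family produces a K3 surface whose $S'$-alternate equation is thereby realized. Proposition~\ref{prop:frame} asserts that a general $S'$-polarized K3 surface admits a unique Jacobian fibration with $\mathpzc{W}_{\text{alt}}=\mathbb{Z}/2\mathbb{Z}$, and by Corollary~\ref{cor:uniqueness} the resulting van Geemen--Sarti presentation is unique up to the rescaling $(a_m,b_n)\mapsto(\lambda^m a_m,\lambda^n b_n)$. Since the fibration defined by our specialized equation still carries $\mathbb{Z}/2$-torsion, it must coincide with this unique alternate fibration. The $S'$-alternate form has $B$ divisible by $v^{k+1}$, hence no $u^{8-k}v^k$ term, whereas in the $S$-alternate form that term appears with coefficient $b_{2(k-2)}$; as rescaling cannot turn a nonzero coefficient into a zero one, we must have $b_{2(k-2)}=0$. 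The main obstacle is the identification of the two fibrations, which rests on the full strength of Proposition~\ref{prop:frame}: one has to rule out any other $\mathbb{Z}/2$-alternate Jacobian fibration on the specialized surface that could disguise itself as the $S'$-alternate equation through a nontrivial change of coordinates. Once that uniqueness is invoked, the polynomial comparison itself is essentially formal.
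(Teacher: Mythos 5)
Your argument is correct, but it takes a genuinely different route from the paper's. The paper disposes of Corollary~\ref{cor:recursiveI} in two sentences by appealing to the theory of confluence of singular fibers: citing \cite{MR890925}, it asserts that the only degeneration from the configuration $I^*_{2(k-2)}+(8-k)I_2+6I_1$ to $I^*_{2(k-1)}+(7-k)I_2+6I_1$ is the collision of the $I^*_{2(k-2)}$ fiber with an $I_2$ fiber, which forces $\deg_u B$ to drop by one, i.e.\ $b_{2(k-2)}=0$. You avoid that external input: your existence step is the same direct polynomial and Kodaira-type verification at $v=0$ (and the count $\operatorname{ord}_{v=0}\Delta_a=2\operatorname{ord}_{v=0}B+2$ promoting $I^*_{2(k-2)}$ to $I^*_{2(k-1)}$ is right), while your uniqueness step substitutes for the confluence argument the combination of Proposition~\ref{prop:frame} and Corollary~\ref{cor:uniqueness}, concluding that the specialized presentation and the canonical $S'$-form differ only by the rescaling, which cannot create or destroy a vanishing coefficient. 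This buys self-containedness at the price of a slightly longer chain of reductions; the paper's version is shorter but leans on the cited classification of fiber degenerations. One point you should make explicit before invoking Corollary~\ref{cor:uniqueness} for $S'$: the comparison of the two normal forms up to rescaling is only licensed once you know the specialized equation already satisfies all the normalizations of that corollary, the nontrivial one being that the $I^*_{2(k-1)}$ fiber sits at $v=0$. This does hold -- the specialized equation has $\operatorname{ord}_{v=0}\Delta_a\ge 2k+2>2$, while every non-$I^*$ fiber in the $S'$ configuration has discriminant order at most $2$ -- and in fact once you have placed the $I^*_{2(k-1)}$ fiber at $v=0$ you could finish more directly: $\operatorname{ord}_{v=0}\Delta_a=2k+4$ forces $\operatorname{ord}_{v=0}B=k+1$, which is exactly $b_{2(k-2)}=0$, without routing through Corollary~\ref{cor:uniqueness} at all.
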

\begin{proof}
The proof is by a recursive argument on the rank of the polarizing lattice. It follows from the work in \cite{MR890925} that from a fibration with singular fibers $I^*_{2(k-2)} + (8-k) I_2+6 I_1$ a fibration with singular fibers $I^*_{2(k-1)} + (7-k) I_2+6 I_1$ can only be obtained by the confluence of the singular fiber of type $I^*_{2(k-2)}$ and $I_2$. This implies that the degree of $B$ as a polynomial in $u$ must drop by one.
\end{proof}
In the remaining case in Equation~(\ref{eqn:lattices}) we have the following:
\begin{prop}
\label{prop:weierstrass_13}
For $S=H \oplus E_7(-1)  \oplus A_1(-1)^{\oplus 4}$ and $L = S^{\perp}_{\Lambda_{K3}} =H^{\oplus 2} \oplus A_1(-1)^{\oplus 5}$, the defining polynomials in Equation~(\ref{eqn:vgs}) can be arranged to have the form 
\beq
\label{eqn:coeffs_13}
\begin{split}
 A = \Big(u^3 + a_4 u v^2 + a_6 v^3\Big) v,  \quad  B 
 = \Big( b_{2} u^5 + b_4 u^4 v + b_6 u^3 v^2 + b_8 u^2 v^3 + b _{10} u v^4+ b_{12} v^{5} \big) v^3,
\end{split}
\eeq  
or
\beq
\label{eqn:coeffs_13_II}
\begin{split}
 A = 2 \Big( c_{-2} u^3 + c_2 u^2 v + c_6 u v^2 + c_{10} v^3\Big)v , \quad  B  = u^5 + d_8 u^3v^2 + d_{12} u^2v^3 + d_{16} uv^4 + d_{20} v^5.
\end{split}
\eeq  
The coefficients in Equation~(\ref{eqn:coeffs_13}) satisfy 
\beq
\label{eqn:locus_sing_13_a}
\begin{split}
  \Big(\, 
 2 a_4 b_4 - 10 a_6 b_2 + b_8, \ 
 a_4 b_6 - 2 a_6 b_4 + 2 b_{10},  \ 
 10 a_4^2 b_2 - 3 a_4 b_6 + 18 a_6 b_4, \ 
 a_4 b_8 - a_6 b_6 + 10 b_{12},  \\ 
 8 a_4^2 b_4 - 30 a_4 a_6 b_2 + 9 a_6 b_6, \ 
 20 b_2 b_{10} - 8 b_4 b_8 + 3 b_6^2, \ 
 4 a_4^3 b_2 + 27 a_6^2 b_2 
 \, \Big)  \ \neq  \ 0 ,
\end{split}
\eeq
and the coefficients in Equation~(\ref{eqn:coeffs_13_II}) satisfy
\beq
\label{eqn:locus_sing_13_b}
\begin{split}
    \Big( \, 
    c_{-2} d_{12} - c_2 d_8 - 10 c_{10}, \ 
    3 c_{-2}^2 d_{16} - 3 c_{-2} c_2 d_{12} + 2 c_2^2 d_8 + 5 c_6^2, \ 
    2 c_{-2} d_{16} - c_2 d_{12} + c_6 d_8, \\ 
    3 d_8^2 + 20 d_{16},  \
    45 c_{-2} d_{20} - 8 c_2 d_{16} + 3 c_6 d_{12}, \
    d_8 d_{12} + 50 d_{20}, \ 
    8 d_8 d_{16} -3 d_{12}^2  
    \,\Big)  \  \neq  \ 0.
\end{split}    
\eeq
\end{prop}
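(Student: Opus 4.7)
The plan is to follow the template of Proposition~\ref{prop:weierstrass}, adapted to cover the two distinct normalizations appearing in the statement. First, by Proposition~\ref{prop:frame} together with Table~\ref{tab:lattices}, the general $S$-polarized K3 surface for $S = H \oplus E_7(-1) \oplus A_1(-1)^{\oplus 4}$ carries a unique Jacobian elliptic fibration with Mordell--Weil group $\mathbb{Z}/2\mathbb{Z}$ and trivial lattice $D_6 \oplus 5 A_1$, forcing the singular-fiber configuration $I_2^* + 5 I_2 + 6 I_1$ (from the Euler-number count $8 + 10 + n = 24$). Moving the zero-section to $[0:1:0]$ and the $2$-torsion section to $[0:0:1]$ then produces Equation~(\ref{eqn:vgs}) with $A, B$ homogeneous of degrees $4$ and $8$ in $(u,v)$.

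To derive the first form~(\ref{eqn:coeffs_13}), I would place the $I_2^*$ fiber at $v=0$; this forces the factorizations $A = (a_0 u^3 + a_2 u^2 v + a_4 uv^2 + a_6 v^3)\,v$ and $B = (b_2 u^5 + b_4 u^4 v + \dots + b_{12} v^5)\,v^3$. Converting to the Weierstrass presentation~(\ref{eqn:vgs_a_prelim_weq}) shows that $a_0 = 0$ would produce a non-minimal $(4,6,12)$-point at $v=0$, forcing $a_0 \neq 0$. Rescaling $(u, X, Y)$ to set $a_0 = 1$ and translating $u \mapsto u - \tfrac{a_2}{3}v$ to kill $a_2$ then yields~(\ref{eqn:coeffs_13}); this is the direct $k=3$ extension of form~(\ref{eqn:coeffs_a}).

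To derive the second presentation~(\ref{eqn:coeffs_13_II}), I would use the dual normalization available on the open stratum $b_2 \neq 0$, using the same coordinate freedoms but instead fixing the coefficient of $u^5 v^3$ in $B$ to $1$ and then killing the $u^4 v$ coefficient of $B$ by a shift in $u$. The leading $A$-coefficient is then generically nonzero, and under the residual rescaling $(a_m, b_n) \mapsto (\lambda^m a_m, \lambda^n b_n)$ of Corollary~\ref{cor:uniqueness} it transforms with modular weight $-2$; this negative weight is exactly what is expected for a generator of the meromorphic ring $M^{!}_*(\Gamma)$ of Theorem~\ref{mainth2}, which is holomorphic only away from the Heegner divisor $\{b_2 = 0\}$. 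Relabelling the coefficients accordingly yields~(\ref{eqn:coeffs_13_II}).

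Finally, the non-vanishing conditions~(\ref{eqn:locus_sing_13_a}) and~(\ref{eqn:locus_sing_13_b}) record the failure of~(\ref{eqn:vgs}) to define a K3 surface at some $[u:v]$ with $v \neq 0$, governed as in Proposition~\ref{prop:weierstrass} by the occurrence of a $(4,6,12)$-point. Such a point at $u = -\beta v$ requires $A = (u+\beta v)^2(u - 2\beta v)\,v$ and $B = (u+\beta v)^4(u + \gamma v)\,v^3$ for some $\beta, \gamma \in \mathbb{C}$; matching these against the coefficient ans\"atze and computing a Gr\"obner basis of the resulting elimination ideal gives the seven listed generators in each case. The principal obstacle is precisely this Gr\"obner computation, which is lengthier than in Proposition~\ref{prop:weierstrass} because $B$ now has degree $5$ in $u$ (and hence the extra parameter $\gamma$); the extra freedom is reflected in the appearance of the cubic discriminant $4 a_4^3 + 27 a_6^2$, weighted by $b_2$, among the generators of~(\ref{eqn:locus_sing_13_a}). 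I would execute this calculation in SageMath, following the convention already adopted in the paper.
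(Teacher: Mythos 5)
Your overall strategy matches the paper's (the paper itself describes its proof as ``analogous to Proposition~\ref{prop:weierstrass}, with $k=3$''), but there is a genuine error at exactly the step that distinguishes the $k=3$ case from $k\ge 4$. You assert that converting to the Weierstrass form~(\ref{eqn:vgs_a_prelim_weq}) shows that $a_0=0$ produces a non-minimal $(4,6,12)$-point at $v=0$, ``forcing $a_0\neq 0$.'' That is false here: with $B=(b_2u^5+\dots+b_{12}v^5)v^3$ and $b_2\neq 0$ one has $\operatorname{ord}_{v}(B)=3$, hence $\operatorname{ord}_v(A^2-3B)=3<4$ even when $a_0=0$, so there is no $(4,6,12)$-point; the fiber at $v=0$ merely degenerates from $I_2^*$ to $III^*$ (frame $E_7+5A_1$, i.e.\ the locus where the polarization extends to $H\oplus D_8(-1)\oplus D_4(-1)$; cf.\ Table~\ref{tab:lattices} and Corollary~\ref{cor:recursiveII}). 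The correct statement, and the one the paper uses, is that a $(4,6,12)$-point at $v=0$ occurs if and only if $(a_0,b_2)=(0,0)$. This is the whole reason the proposition offers \emph{two} normal forms: (\ref{eqn:coeffs_13}) is the chart $a_0\neq 0$ and (\ref{eqn:coeffs_13_II}) is the chart $b_2\neq 0$, and together (but not separately) they cover the moduli space. Under your premise the second form would be a redundant renormalization and the divisor $\{a_0=0\}$ would be wrongly excluded, so the ``or'' in the statement is not actually established by your argument.

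Two smaller points. First, your ansatz for a $(4,6,12)$-point at $v\neq 0$ in the first chart, $B=(u+\beta v)^4(u+\gamma v)v^3$, silently normalizes the leading coefficient of $B$ to $1$; since $b_2$ is a free parameter in chart~(\ref{eqn:coeffs_13}) (it vanishes on the $H\oplus E_8(-1)\oplus A_1(-1)^{\oplus 4}$ locus), the correct ansatz is $B=(b_2u+\alpha v)(u+\beta v)^4v^3$, which is what produces the $b_2$-weighted generators such as $4a_4^3b_2+27a_6^2b_2$ in~(\ref{eqn:locus_sing_13_a}). Second, the ansatz in the second chart is genuinely different: there the shift in $u$ normalizes $B$ rather than $A$, so one needs $A=2c_{-2}(u+\alpha v)(u+\beta v)^2v$ with two free parameters and $B=(u-4\beta v)(u+\beta v)^4v^3$, and running ``the same ansatz in each case'' will not reproduce~(\ref{eqn:locus_sing_13_b}).
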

\begin{proof}
The proof is analogous to the proof of Proposition~\ref{prop:weierstrass}, except now we have set $k=3$. Equation~(\ref{eqn:vgs_a_prelim_weq}) shows that we have a $(4,6, 12)$-point at $v=0$ if and only if $(a_0, b_2)=(0, 0)$. For $a_0 \neq 0$ we rescale so that we can assume $a_0=1$ and eliminate $a_2$ by a suitable shift in $u$ to obtain the coefficients in Equation~(\ref{eqn:coeffs_13}). Equation~(\ref{eqn:locus_sing_13_a}) follows as in the proof of Proposition~\ref{prop:weierstrass} since a $(4,6, 12)$-point with $v\neq 0$ requires $A = (u+\beta v)^2 (u -2 \beta v)v$ and $B = (b_2 u + \alpha v) ( u + \beta v)^4v^3$ for some $\alpha, \beta \in\mathbb{C}$. The case $b_2 \neq 0$ works similarly and yields the coefficients given by Equation~(\ref{eqn:coeffs_13_II}). Here, a $(4,6, 12)$-point with $v\neq 0$ requires $A = 2c_{-2}(u+\alpha v)(u+\beta v)^2 v$ and $B = (u - 4 \beta v) ( u + \beta v)^4v^3$ for some $\alpha, \beta \in\mathbb{C}$.
\end{proof}
We have the following:
\begin{lem}
\label{lem:transfo_coeffs}
For $c_{-2} \neq 0$ the coefficients in~(\ref{eqn:coeffs_13_II}) can be transformed into coefficients in~(\ref{eqn:coeffs_13}), with $b_2 = 1/(2c_{-2})$ and
\beq
\begin{split}
 a_4 = 4 c_{-2} c_6 - \frac{4}{3} c_2^2, \quad b_4 = - \frac{5}{3} \frac{c_2}{c_{-2}}, \quad a_6 = 8 c_{-2}^2 c_{10} - \frac{8}{3} c_{-2} c_2 c_6 + \frac{16}{27} c_2^3, \quad b_6 = \frac{20}{9} \frac{c_2^2}{c_{-2}} + 2 c_{-2} d_8,\\
 b_8 = - \frac{40}{27} \frac{c_2^3}{c_{-2}} - 4 c_{-2} c_2 d_8 + 4 c_{-2}^2 d_{12},\quad
 b_{10} = \frac{40}{81} \frac{c_2^4}{c_{-2}} + \frac{8}{3} c_{-2} c_2^2 d_8 - \frac{16}{3} c_{-2}^2 c_2 d_{12} + 8 c_{-2}^3 d_{16}, \\
 b_{12} = - \frac{16}{243} \frac{c_2^5}{c_{-2}} - \frac{16}{27} c_{-2} c_2^3 d_8 + \frac{16}{9} c_{-2}^2 c_2^2 d_{12} - \frac{16}{3} c_{-2}^3c_2 d_{16} + 16 c_{-2}^4 d_{20}.
\end{split}
\eeq
A similar statement holds for $b_2 \neq 0$. 
\end{lem}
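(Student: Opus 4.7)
The plan is to exhibit the explicit coordinate transformation sending (\ref{eqn:coeffs_13_II}) to (\ref{eqn:coeffs_13}) and then read off the transformed coefficients. First I would pass to the affine chart $Z = v = 1$, so that (\ref{eqn:vgs}) reads $y^2 = x^3 + A(t)\,x^2 + B(t)\,x$ with $t = u/v$. The van~Geemen--Sarti shape, together with the two-torsion section $(x, y) = (0, 0)$, is preserved precisely by the composition of an affine base change $t = \alpha t' + \mu$ with a fiber rescaling $(x, y) \mapsto (s^2 x, s^3 y)$; under this operation the new polynomials become $A_{\mathrm{new}}(t') = s^{-2} A(\alpha t' + \mu)$ and $B_{\mathrm{new}}(t') = s^{-4} B(\alpha t' + \mu)$.

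Next I would impose the defining normalization of (\ref{eqn:coeffs_13}). Vanishing of the $(t')^2$ coefficient of $A_{\mathrm{new}}$ forces $\mu = -c_2/(3 c_{-2})$, which is legitimate because $c_{-2} \neq 0$, and the leading $(t')^3$ coefficient being equal to $1$ forces $s^2 = 2 c_{-2} \alpha^3$. One scalar freedom remains, corresponding exactly to the residual rescaling of (\ref{eqn:coeffs_13}) identified in Corollary~\ref{cor:uniqueness}. I would fix it by the assertion $b_2 = 1/(2 c_{-2})$ stated in the lemma: since $b_2 = s^{-4} \alpha^5 = 1/(4 c_{-2}^2 \alpha)$, this pins down $\alpha = 1/(2 c_{-2})$ and therefore $s^{-4} = 16 c_{-2}^4$. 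This explains why this particular normalization is canonical: it is the unique choice in which both $\alpha$ and $s^{\pm 2}$ are rational functions of $c_{-2}$, avoiding the cube roots that would otherwise appear.

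With $\alpha, \mu, s$ determined, the remaining formulas follow by direct expansion: $a_4$ and $a_6$ are the $t'$- and constant coefficients of $s^{-2} A(\alpha t' + \mu)$, while $b_4, \dots, b_{12}$ are the coefficients of $s^{-4} B(\alpha t' + \mu)$ in descending powers of $t'$. For the ``similar statement'' when $b_2 \neq 0$ in (\ref{eqn:coeffs_13}), I would argue symmetrically: shift $t$ to eliminate the $t^4$-term of the transformed $B$ and rescale so that the leading $t^5$-coefficient becomes $1$, producing a representative of the shape (\ref{eqn:coeffs_13_II}) with analogous closed-form expressions.

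I expect no conceptual obstacle; the entire calculation reduces to binomial expansion of $(\alpha t' + \mu)^n$ for $n \leq 5$ followed by multiplication by the appropriate power of $s^{-2}$. The tedious part will be the bookkeeping of $c_2/c_{-2}$ ratios in the formulas for $b_6, b_8, b_{10}, b_{12}$. A useful consistency check is the modular-weight grading $c_m \mapsto \lambda^m c_m$, $d_n \mapsto \lambda^n d_n$, $a_m \mapsto \lambda^m a_m$, $b_n \mapsto \lambda^n b_n$: every monomial in each formula of the lemma is homogeneous of the advertised weight, which makes transcription errors easy to detect.
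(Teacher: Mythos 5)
Your proof is correct and is precisely the computation the paper leaves implicit (the lemma is stated there without proof): the affine base change $t = \alpha t' + \mu$ with $\mu = -c_2/(3c_{-2})$ and $\alpha = 1/(2c_{-2})$, combined with the fiber rescaling $(x,y)\mapsto(s^2x,s^3y)$ where $s^{-2}=4c_{-2}^2$, reproduces all the stated formulas (I spot-checked $a_4$, $a_6$, $b_4$, $b_6$ and they come out exactly as in the lemma). The only blemish is your side remark that $b_2 = 1/(2c_{-2})$ is the \emph{unique} normalization making $\alpha$ and $s^{\pm 2}$ rational in $c_{-2}$ --- since $s^2 = 2c_{-2}\alpha^3$, any rational choice of $\alpha$ would do --- but this remark plays no role in the argument, as the value of $b_2$ is fixed by the statement of the lemma itself.
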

\begin{cor}
\label{cor:uniqueness13}
The coefficients of the alternate fibration determined in Proposition~\ref{prop:weierstrass_13} are unique, up to changing coefficients according to Lemma~\ref{lem:transfo_coeffs} or a rescaling $(a_m, b_n) \mapsto (\lambda^m a_m, \lambda^n b_n)$ and $(c_m, d_n) \mapsto (\lambda^m c_m, \lambda^n d_n)$, respectively, with $\lambda \in \mathbb{C}^\times$. 
\end{cor}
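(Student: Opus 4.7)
The plan is to adapt the proof of Corollary~\ref{cor:uniqueness} to the rank-$13$ setting. The single new feature is that Proposition~\ref{prop:weierstrass_13} produces two admissible normal forms, namely~(\ref{eqn:coeffs_13}) and~(\ref{eqn:coeffs_13_II}), corresponding to the two alternative ways of avoiding a non-minimal $(4,6,12)$-point above $v=0$, and Lemma~\ref{lem:transfo_coeffs} is the bridge between them.

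First I would invoke Proposition~\ref{prop:frame} to conclude that the canonical alternate Jacobian elliptic fibration on a general $S$-polarized K3 surface is unique up to automorphism. Among its singular fibers (of type $I^*_2 + 5 I_2 + 6 I_1$) there is a unique fiber of type $I^*_2$; moving it to sit above $v=0$ determines the base coordinate on $\mathbb{P}^1$ up to a scalar. Placing the zero and $2$-torsion sections at $[0:1:0]$ and $[0:0:1]$ then fixes the van Geemen--Sarti presentation~(\ref{eqn:vgs}) up to its residual coordinate freedom, which consists of the affine $u$-shifts $u\mapsto u+\gamma v$ together with the one-parameter rescalings $(u,v,X,Y)\mapsto (\lambda^2 u, v, \lambda^6 X, \lambda^9 Y)$.

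Second, I would observe that, as already noted in the proof of Proposition~\ref{prop:weierstrass_13}, the absence of a $(4,6,12)$-point over $v=0$ forces at least one of the coefficients $a_0$ (the coefficient of $u^3$ in $A$) or $b_2$ (the coefficient of $u^5$ in $B$) to be nonzero. If $a_0 \neq 0$, the rescaling is spent to set $a_0=1$ and the shift is spent to kill $a_2$, producing form~(\ref{eqn:coeffs_13}). If $b_2\neq 0$, the rescaling is spent to normalize the leading coefficient of $B$ to $1$ and the shift is spent to kill the coefficient of $u^4v$ in $B$, producing form~(\ref{eqn:coeffs_13_II}). In each case the shift freedom is entirely exhausted, and the only residual gauge is the one-parameter family $\lambda\in\mathbb{C}^{\times}$, whose action on the normalized coefficients is computed to be exactly $(a_m,b_n)\mapsto(\lambda^m a_m,\lambda^n b_n)$ or $(c_m,d_n)\mapsto(\lambda^m c_m,\lambda^n d_n)$, respectively.

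Third, when both $a_0\neq 0$ and $b_2\neq 0$ hold, the two normalizations are simultaneously available and produce two distinct but equivalent presentations of the same elliptic surface; Lemma~\ref{lem:transfo_coeffs} provides the explicit substitution converting~(\ref{eqn:coeffs_13_II}) into~(\ref{eqn:coeffs_13}) (and similarly in the reverse direction), accounting for this ambiguity. The main point to verify, and the principal (though still routine) obstacle, is that within each fixed normalization no further holomorphic coordinate change preserves both the van Geemen--Sarti shape~(\ref{eqn:vgs}) and the chosen leading-coefficient normalization beyond the rescaling $\lambda\in\mathbb{C}^{\times}$. This is checked by writing out the effect of an arbitrary rescaling and $u$-shift on the normalized coefficients and observing that requiring $a_0=1$ and $a_2=0$ (resp.\ leading coefficient of $B$ equal to $1$ and coefficient of $u^4v$ equal to zero) reduces the ambiguity exactly to the stated one-parameter family.
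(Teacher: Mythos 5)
Your proof is correct and follows essentially the same route the paper takes: the paper states Corollary~\ref{cor:uniqueness13} without a separate proof, relying on the argument of Corollary~\ref{cor:uniqueness} (uniqueness of the alternate fibration from Proposition~\ref{prop:frame}, the unique $I^*_2$ fiber placed at $v=0$, the shift in $u$ exhausted by the normalization, leaving only the weighted rescaling) together with Lemma~\ref{lem:transfo_coeffs} to pass between the two normal forms when both $a_0\neq 0$ and $b_2\neq 0$. Your write-up makes that implicit argument explicit, including the correct weights for the two rescalings, and nothing further is needed.
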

\begin{cor}
\label{cor:recursiveII}
Setting $b_2 =0$ (or $c_{-2}=0$) in the van Geemen-Sarti form~(\ref{eqn:vgs}) with coefficients in~(\ref{eqn:coeffs_13}) (or coefficients in~(\ref{eqn:coeffs_13_II})), the alternate fibration for the polarizing lattice $S'=H \oplus E_8(-1)  \oplus A_1(-1)^{\oplus 4}$ (or $S'=H \oplus D_8(-1)  \oplus D_4(-1)$, respectively) is attained. Moreover, this is the only way to obtain the alternate equation for $S'$.
\end{cor}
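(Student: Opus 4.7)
The plan is to adapt the argument from Corollary~\ref{cor:recursiveI}, combining a direct identification of the degenerate equations with a Kodaira--N\'eron confluence argument for uniqueness.

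First, I would verify the existence direction by inspection. Setting $b_2 = 0$ in Equation~(\ref{eqn:coeffs_13}) leaves $A$ unchanged and transforms $B$ into $(b_4 u^4 + b_6 u^3 v + b_8 u^2 v^2 + b_{10} u v^3 + b_{12} v^4)\, v^4$, which after relabeling coefficients is precisely Equation~(\ref{eqn:coeffs_a}) with $k=4$; hence Proposition~\ref{prop:weierstrass} identifies this with the van Geemen--Sarti form of the alternate fibration for $S' = H \oplus E_8(-1) \oplus A_1(-1)^{\oplus 4}$. Similarly, setting $c_{-2} = 0$ in Equation~(\ref{eqn:coeffs_13_II}) yields $A = 2(c_2 u^2 + c_6 uv + c_{10} v^2)\, v^2$, which together with the unchanged $B$ matches Equation~(\ref{eqn:coeffs_b}), hence the alternate fibration for $S' = H \oplus D_8(-1) \oplus D_4(-1)$.

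For uniqueness I would mimic the confluence argument used in the proof of Corollary~\ref{cor:recursiveI}. Using Table~\ref{tab:lattices} together with the Kodaira--N\'eron dictionary ($D_{n+4} \leftrightarrow I_n^*$, $E_7 \leftrightarrow III^*$, $A_{n-1} \leftrightarrow I_n$), the alternate fibration on a general rank-$13$ $S$-polarized K3 surface has singular fiber configuration $I_2^* + 5\,I_2 + 6\,I_1$, while the alternate fibrations on the two rank-$14$ $S'$-polarized K3 surfaces have configurations $I_4^* + 4\,I_2 + 6\,I_1$ and $III^* + 5\,I_2 + 5\,I_1$, respectively. Invoking the classification of degenerations from \cite{MR890925}, the only confluence carrying the first configuration into the second is $I_2^* + I_2 \to I_4^*$, which raises the order of vanishing of $B$ at $v=0$ by one and thus forces $b_2 = 0$; the only confluence producing $III^* + 5\,I_2 + 5\,I_1$ is $I_2^* + I_1 \to III^*$, which raises the order of vanishing of $A$ at $v=0$ by one and forces $c_{-2} = 0$.

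I expect the main obstacle to be the careful application of \cite{MR890925} to exclude alternative confluences that preserve the total Euler number (always $24$) but would produce an incompatible frame lattice, such as $I_2 + I_2 \to I_4$ combined with splittings of other fibers, or combinations that change the position of the $I_2^*$. To close this gap I would appeal to Proposition~\ref{prop:frame}, which pins down the alternate fibration on the degenerate K3 surface uniquely; the root lattice $R^{\text{root}}_{\text{alt}}$ of the degenerate frame is then dictated by Table~\ref{tab:lattices}, so any alternative confluence leading to a different root lattice is automatically excluded. Combined with Corollaries~\ref{cor:uniqueness13} and~\ref{cor:uniqueness}, this identifies $b_2 = 0$ (respectively $c_{-2} = 0$) as the unique specialization producing the targeted rank-$14$ alternate equation, up to the permitted rescalings.
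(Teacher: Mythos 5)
Your proposal is correct and is essentially the argument the paper intends: the paper states Corollary~\ref{cor:recursiveII} without a separate proof, as the direct analogue of Corollary~\ref{cor:recursiveI}, whose proof is exactly the confluence argument from \cite{MR890925} that you reproduce (here $I_2^*+I_2\to I_4^*$ forcing $b_2=0$, resp. $I_2^*+I_1\to III^*$ forcing $c_{-2}=0$). Your additional steps --- the direct matching of the specialized equations with (\ref{eqn:coeffs_a}) and (\ref{eqn:coeffs_b}), and the appeal to Proposition~\ref{prop:frame} and Table~\ref{tab:lattices} to rule out other Euler-number-preserving confluences --- only make explicit what the paper leaves implicit.
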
%
\begin{rem}
In the affine coordinates $Z=v=1$, $X=x, Y=y$, and $u=t$, Equation~(\ref{eqn:vgs}) becomes Equation~(\ref{eqn:vgs_a_intro}) from the introduction. Moreover, a canonical representative for a nowhere vanishing holomorphic two-form $\omega_\mathcal{X}$ on $\mathcal{X}$ is determined on $\overline{\mathcal{X}}$ in the affine coordinate chart by $dt \wedge dx/y$. The minimal resolution of the van Geemen-Sarti equation yields an $S$-polarized K3 surface $(\mathcal{X}, i)$, and any isometry $\phi\colon H^2(\mathcal{X}, \mathbb{Z}) \to \Lambda_{K3}$ of lattices, with $i = \phi^{-1}\vert_S$, then assigns to $\omega_\mathcal{X}$ a point $z_0 = \phi(\omega_\mathcal{X}) \in  \mathbb{P}\big( L \otimes \mathbb{C} \big)$ with $L = S^{\perp}_{\Lambda_{K3}}$. The pseudo-orthogonal group $\mathrm{O}(L)$ contains a subgroup $\mathrm{O}(L)^+ < \mathrm{O}(L)$ of index 2 preserving the component $\mathscr{D}(L)$ with $z_0 \in \mathscr{D}(L)$. The marking $\phi$ is determined up to a left multiplication by an element of $\mathrm{O}^+(\Lambda_{K3})$ leaving $z_0$ and $L$ invariant, and $\Gamma=\mathrm{O}^+(L, z_0)$ is precisely the arithmetic subgroup of finite index in $\mathrm{O}(L)^+$ formed by the restrictions to $L$ of such operators. 
\end{rem}
\begin{rem}
Propositions~\ref{prop:weierstrass} and~\ref{prop:weierstrass_13} provide a construction for the moduli spaces of $S$-polarized K3 surfaces with $S$ in Equation~(\ref{eqn:lattices}) as open subvarities in the weighted projective spaces whose coordinate rings are generated by the coefficients of $A, B$. The complement of the discriminant locus within these weighted projective spaces is given by $B \not \equiv 0$ for $k>4$ and Equation~(\ref{eqn:locus_sing_a}) (resp.~(\ref{eqn:locus_sing_13_a})) for $k=4$ and Equation~(\ref{eqn:locus_sing_b}) (resp.~(\ref{eqn:locus_sing_13_b})) for $k=3$. Our construction then implies immediately that the moduli spaces of the $S$-polarized K3 surfaces are unirational and confirms previous observations by M.~Reid \cite{MR605348} and S.~Ma~\cite{MR3275655}.
\end{rem}
\begin{rem}
If we set $c_2 = c_6 = c_{10} = 0$ in Equation~(\ref{eqn:vgs}) with defining polynomials in Equation~(\ref{eqn:coeffs_b}), we obtain the sub-family of the $H\oplus D_8(-1) \oplus D_4(-1)$-polarized family with singular fibers $III^* + 5 III$. This family also appeared in \cite{MR2296434}. In his work Kondo considered the complex ball uniformization of the moduli space of eight ordered points on the projective line by using the theory of periods of certain K3 surfaces, also building on earlier work in \cite{MR1780433}. In terms of Equation~(\ref{eqn:vgs}), the corresponding K3 surfaces are given by $A\equiv 0$ and the aforementioned eight points in $\mathbb{P}^1$ define the roots of $B$.
\end{rem}
\section{Orthogonal modular forms}

\subsection{Modular forms on orthogonal groups}

Let $L$ be an even integral lattice of signature $(2, n)$ and define $L_{\mathbb{R}} = L \otimes \mathbb{R}$ and $L_{\mathbb{C}} = L \otimes \mathbb{C}$. \\

The Hermitian symmetric domain $\mathcal{D}$ for $\mathrm{O}(L)$ is the space $$\mathcal{D} = \Big\{ [z] \in \mathbb{P}(L_{\mathbb{C}}): \; \langle z, z \rangle = 0, \; \langle z, \overline{z} \rangle > 0, \; \{\mathrm{re}[z], \mathrm{im}[z]\} \; \text{oriented} \Big\}.$$
In other words, if $z = x+iy$ then $[z] \in \mathcal{D}$ if and only if, up to scalar, $(x, y)$ is an oriented orthonormal basis of a positive-definite plane in $L_{\mathbb{R}}$. The affine cone over $\mathcal{D}$ is the set $$\mathcal{A} = \Big\{ z \in L_{\mathbb{C}}: \; [z] \in \mathcal{D} \Big\}$$ of points that span a line in $\mathcal{D}$. \\

Let $\mathrm{O}(L)$ be the orthogonal group of $L$. There is an index two subgroup $\mathrm{O}^+(L)$ (the spinor kernel) of $\mathrm{O}(L)$ defined by the fact that it preserves the set $\mathcal{D}$ under multiplication. It is also characterized by the following fact: for $r \in L_{\mathbb{R}}$, the reflection $$\sigma_r \colon v \mapsto v - \frac{\langle v, r \rangle}{\langle r, r \rangle} r$$ belongs to $\mathrm{O}^+(L_{\mathbb{R}})$ if and only if $r^2 > 0$.\\

Let $\Gamma \le \mathrm{O}^+(L)$ be a finite-index subgroup. A meromorphic function $f\colon \mathcal{A} \rightarrow \mathbb{C}$ is called a \textbf{meromorphic modular form} of weight $k$ and level $\Gamma$ if
\begin{enumerate}
\item $f(\gamma z) = f(z)$ for all $\gamma \in \Gamma$,
\item $f(\lambda z) = \lambda^{-k} f(z)$ for $\lambda \in \mathbb{C}^{\times}$.
\end{enumerate}
The most important such $\Gamma$ (besides $\mathrm{O}^+(L)$ itself) is the discriminant kernel $$\widetilde{\mathrm{O}}(L) = \Big \{ \gamma \in \mathrm{O}^+(L): \; \gamma x - x \in L \; \text{for all} \; x \in L' \Big\}.$$

\subsection{Jacobi forms and lifts}

Throughout this section, we assume that we have fixed a splitting $L = H \oplus H \oplus K(-1)$ with a positive-definite lattice $K$.
The dual lattice is labeled $$K' = \{x \in K \otimes \mathbb{Q}: \; \langle x, y \rangle \in \mathbb{Z} \; \text{for all} \; y \in K\}.$$

\begin{defn} A \textbf{Jacobi form} of weight $k$ and index $K$ is a holomorphic function $$\phi \colon \mathbb{H} \times K_{\mathbb{C}} \longrightarrow \mathbb{C}$$ that satisfies the functional equations:
$$\phi\left( \frac{a \tau + b}{c \tau + d}, \frac{z}{c \tau + d} \right) = (c \tau + d)^k \exp \Big( \pi i \frac{c \cdot \langle z, z \rangle}{c \tau + d} \Big) \cdot \phi(\tau, z)$$ and $$\phi \Big( \tau, z + \lambda \tau + \mu \Big) = \exp \Big( -\pi i \langle \lambda, \lambda \rangle \tau - 2\pi i \langle \lambda, z \rangle \Big) \cdot \phi(\tau, z)$$ for any $\begin{psmallmatrix} a & b \\ c & d \end{psmallmatrix} \in \mathrm{SL}_2(\mathbb{Z})$ and $\lambda, \mu \in K$.
\end{defn}

Jacobi forms are represented by their Fourier series $$\phi(\tau, z) = \sum_{n = -\infty}^{\infty} \sum_{r \in K'} c(n, r) q^n \zeta^r,$$ where $q^n := e^{2\pi i n \tau}$ and $\zeta^r := e^{2\pi i \langle z, r \rangle}$. The Jacobi form $\phi$ is called
\begin{itemize}
\item \textbf{weakly holomorphic} if $c(n, r) = 0$ for sufficiently small $n$ and every $r$;
\item \textbf{weak} if $c(n, r) = 0$ for every $n < 0$ and every $r$;
\item \textbf{holomorphic} if $c(n, r) = 0$ whenever $n - \langle r, r \rangle / 2 < 0$; 
\item a \textbf{cusp form} if $c(n, r) = 0$ whenever $n - \langle r, r \rangle / 2 \le 0$.
\end{itemize}
The value $n - \langle r, r \rangle / 2$ is also called the hyperbolic norm of the tuple $(n, r)$.

\noindent
$G = \mathrm{Aut}(K'/K)$ operates on Jacobi forms of index $K$ via  $\phi^g(\tau, z) := \phi(\tau, gz).$ \\

If $f\colon \mathcal{A} \rightarrow \mathbb{C}$ is a modular form on the discriminant kernel $\widetilde{\mathrm{O}}(L)$ then $f$ can be expressed as a Fourier-Jacobi series as follows. Write elements of $L$ as tuples in the form $(a, b, x, c, d)$, with $a, b, c, d \in \mathbb{Z}$ and $x \in K$, where the norm of the tuple is $$\langle (a, b, x, c, d) , (a, b, x, c, d) \rangle / 2 = ad - bc - \langle x, x \rangle / 2,$$ and $\langle -, - \rangle$ also denotes the (positive definite) inner product on $K$. \\
For any elements $\tau, w \in \mathbb{H}$ and any $z \in K_{\mathbb{C}}$, the element $$Z = (1, \tau, z, w,  \tau w + \langle z,z \rangle / 2)$$ lies in $\mathcal{A}$ if and only if it is oriented (i.e., $\mathrm{im}(z)$ lies in the so-called positive cone) and if it satisfies \begin{align*} 0 &< \langle Z, \overline{Z} \rangle \\ &= \tau w + \langle z,z \rangle /2 + \overline{\tau w} + \langle \overline{z}, \overline{z} \rangle / 2 - \tau \overline{w} - \overline{\tau} w - \langle z, \overline{z} \rangle \\ &= -4 \mathrm{im}(\tau) \mathrm{im}(w) - 2 \langle \mathrm{im}(z), \mathrm{im}(z) \rangle, \end{align*} i.e., we have $\langle \mathrm{im}(z), \mathrm{im}(z) \rangle / 2 < \mathrm{im}(\tau) \cdot \mathrm{im}(w)$.

The Fourier-Jacobi expansion of $f$ is defined by $$f(Z) = \sum_{n=0}^{\infty} \phi_n(\tau, z) e^{2\pi i n w}.$$ Each $\phi_n$ is a Jacobi form of index $K(n)$, i.e., $K$ with its inner product multiplied by $n$. \\

The Gritsenko lift is defined in terms of the Hecke raising operators $$V_N \colon J_{k, K} \longrightarrow J_{k, K(N)},$$ which are lattice-index generalizations of the operators introduced in \cite{EZ85}. For $N \ge 1$ we define $$\phi \Big| V_N(\tau, z) = \frac{1}{N} \sum_{\substack{ad = N \\ a > 0}} \sum_{b \in \mathbb{Z}/d\mathbb{Z}} a^k \phi \Big( \frac{a \tau + b}{d}, a z \Big).$$
If $\phi(\tau, z) = \sum_{n, r} c(n, r) q^n \zeta^r$ then $$\phi \Big| V_N(\tau, z) = \sum_{d | (n, r, N)} d^{k-1} c \Big( \frac{Nn}{d^2}, \frac{r}{d} \Big).$$

For $N = 0$ and even $k \ge 4$ we define $\phi \Big| V_0$ to be the abelian function $$\phi \Big| V_0 = -c(0,0) \cdot \frac{B_k}{2k} E_k(\tau) + \frac{1}{(2\pi i)^k} \sum_{r > 0} c(0, r) \wp^{(k-2)}(\tau, \langle r, z \rangle),$$ where $$\wp(\tau, z) = \frac{1}{z^2} + \sum_{\substack{m,n \in \mathbb{Z} \\ (m, n) \ne (0, 0)}} \Big( \frac{1}{(z - m \tau - n)^2} - \frac{1}{(m \tau + n)^2} \Big)$$ is the Weierstrass $\wp$-function, where $\wp^{(k-2)}$ is its $(k-2)$nd derivative with respect to $z$, $B_k$ is the $k$th Bernoulli number, and $E_k$ is the Eisenstein series of weight $k$ for $\mathrm{SL}_2(\mathbb{Z})$, and where $r > 0$ means that $r|_K$ lies in the positive cone. For $k=2$ we define $$\phi \Big| V_0 = \frac{1}{(2\pi i)^k} \sum_{r > 0} c(0, r) \wp(\tau, \langle r, z \rangle).$$

\begin{thm}\label{th:gritsenko}

Suppose $\phi(\tau, z) = \sum_{n, r} c(n, r) q^n \zeta^r$ is a weakly holomorphic Jacobi form of weight $k \ge 1$ and index $K$. Then $$\mathrm{G}(\phi)(Z) := \sum_{m=0}^{\infty} \Big( \phi \Big| V_m \Big) e^{2\pi i m w}$$ defines a meromorphic modular form of weight $k$ on the discriminant kernel $\mathrm{\widetilde O}(L)$. The poles of $\mathrm{G}(\phi)$ are all of order $k$ and lie on hyperplanes $$r^{\perp} = \Big\{ Z \in \mathcal{A}: \; \langle Z, r\rangle = 0 \Big\}$$ for positive norm vectors $r \in L'$. \\
Moreover if $\phi$ is invariant under $\mathrm{Aut}(K'/K)$ then $\mathrm{G}(\phi)$ is modular under the larger group $\mathrm{O}^+(L)$.
\end{thm}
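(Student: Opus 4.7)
The plan is to realize $\mathrm{G}(\phi)$ as a formal Fourier--Jacobi expansion at the cusp $w \to i\infty$, verify modularity under a generating set of $\widetilde{\mathrm{O}}(L)$, locate the poles, and finally promote the symmetry group to $\mathrm{O}^+(L)$ under the $\mathrm{Aut}(K'/K)$-invariance hypothesis. First I would establish convergence of the series $\sum_{m \ge 0}(\phi\mid V_m)(\tau, z)\,e^{2\pi i m w}$. For $m \ge 1$, the Hecke lift $\phi\mid V_m$ is a weakly holomorphic Jacobi form of weight $k$ and index $K(m)$; its Fourier coefficients grow at most polynomially, and the damping factor $e^{2\pi i m w}$ on the tube domain $\mathrm{im}(\tau)\mathrm{im}(w) > \langle \mathrm{im}(z), \mathrm{im}(z)\rangle/2$ yields absolute convergence away from a codimension-one singular locus. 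The $m=0$ term $\phi\mid V_0$ is meromorphic with poles of order $k$ along $\langle z, r\rangle \in \mathbb{Z}\tau + \mathbb{Z}$ for those $r \in K'$ with $c(0, r)\neq 0$, contributed by the $\wp^{(k-2)}$-normalization.

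Next I would verify modularity under $\widetilde{\mathrm{O}}(L)$, which for $L = H \oplus H \oplus K(-1)$ is generated by the two parabolic subgroups fixing each isotropic line --- yielding the $\mathrm{SL}_2(\mathbb{Z})$-action on $(\tau, z)$, $K$-translations of $z$, the analogous translations of $w$, and central translations --- together with an involution $\iota$ exchanging the two copies of $H$. Invariance under the parabolic generators is immediate from the Jacobi transformation laws applied term-by-term. For $\iota$, the formula
\beqn
\phi\mid V_m(\tau, z) \ = \ \sum_{n, r} \Big( \sum_{d \mid (n, r, m)} d^{k-1}\, c(nm/d^2, r/d) \Big)\, q^n \zeta^r
\eeqn
makes the coefficient of $q^n \zeta^r e^{2\pi i m w}$ in $\mathrm{G}(\phi)$ manifestly symmetric in $(n, m)$ whenever both $n, m \ge 1$. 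The substantive content of $\iota$-invariance therefore lives entirely in the $n = 0$ slice: the $\wp^{(k-2)}$- and $E_k$-pieces of $\phi\mid V_0$ must reproduce, after the formal swap $\tau \leftrightarrow w$, exactly the same divisor-sum coefficients that $\phi\mid V_m$ supplies at $n = 0$. This is precisely why $V_0$ is defined using $\wp$; the required identity is an instance of the classical Fourier expansion of $\wp^{(k-2)}$ as a Lambert series of divisor-sum type, and I expect this matching to be the principal technical obstacle.

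Once $\widetilde{\mathrm{O}}(L)$-modularity is established, the pole divisors of $\mathrm{G}(\phi)$ are the $\widetilde{\mathrm{O}}(L)$-orbits of the $\wp$-poles of $V_0$. Each such pole is supported on a rational quadratic divisor $r^\perp$ with $r \in L'$ of positive norm, and the pole order is exactly $k$ because $\wp^{(k-2)}$ has a pole of that order at its singularities. Finally, if $\phi$ is $\mathrm{Aut}(K'/K)$-invariant, then so is every $\phi\mid V_m$, since $V_N$ commutes with the $\mathrm{Aut}(K'/K)$-action. As $\mathrm{O}^+(L)/\widetilde{\mathrm{O}}(L)$ is generated by the image of $\mathrm{Aut}(K'/K) \cong \mathrm{Aut}(D(L))$ acting through the discriminant form, the form $\mathrm{G}(\phi)$ automatically descends to a meromorphic modular form on the full group $\mathrm{O}^+(L)$.
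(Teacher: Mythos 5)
The paper does not prove this theorem at all: it is quoted from the literature, with the construction attributed to Section 14 of Borcherds \cite{B98} (and to Gritsenko \cite{G88} in the holomorphic case), where the lift is built as a regularized theta integral against a Siegel theta function. Your proposal instead follows the older, direct route of Gritsenko: define the lift by its Fourier--Jacobi series and check invariance under a generating set of $\widetilde{\mathrm{O}}(L)$ (the two Jacobi parabolics plus the involution swapping the hyperbolic planes), with the entire content concentrated in the interaction between the $V_0$-term and the $n=0$ slices of the $V_m$. That is a legitimate alternative strategy, and you correctly identify where the work lies; the theta-integral approach buys automatic convergence and a clean description of the singularities on all of $\mathscr{D}$, while your approach is more elementary and makes the Fourier--Jacobi coefficients visible from the start.

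As a proof, however, your sketch has concrete gaps. First, the convergence argument rests on a false premise: weakly holomorphic Jacobi forms do not have polynomially bounded coefficients (they grow like $e^{C\sqrt{n}}$); convergence still holds for $\mathrm{im}(w)$ large, but only on a proper subdomain of $\mathcal{A}$, and you never explain how $\mathrm{G}(\phi)$ is continued to the rest of the domain. In the direct approach one must show that the $\widetilde{\mathrm{O}}(L)$-translates of the convergence region cover $\mathcal{A}$ minus the singular locus, or appeal to the theta integral; without this, the claim that \emph{all} poles lie on positive-norm hyperplanes $r^{\perp}$ (rather than merely the poles inside the initial domain of convergence) is unproved. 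Second, the $\iota$-invariance at the $n=0$ and $m=0$ slices --- which you rightly call the principal obstacle --- is asserted via ``the classical Lambert-series expansion of $\wp^{(k-2)}$'' but not carried out; this is exactly the computation that justifies the peculiar definition of $V_0$, including the $E_k$-term weighted by $-B_k/2k$, and it behaves differently for $k=2$ (where the Eisenstein term is dropped) and is not even defined in the paper for odd $k$ or $k\in\{1,3\}$, cases your argument would also need to address since the theorem claims $k\ge 1$. Finally, the statement that $\mathrm{O}^+(L)/\widetilde{\mathrm{O}}(L)$ is ``generated by the image of $\mathrm{Aut}(K'/K)$'' should be replaced by the (sufficient and easier) observation that the action of $\mathrm{O}^+(L)$ on index-$K$ Jacobi forms factors through $\mathrm{O}(D(L))\cong\mathrm{Aut}(K'/K)$, so invariance of $\phi$ under the latter forces invariance of $\mathrm{G}(\phi)$ under the former; surjectivity of $\mathrm{O}^+(L)\to\mathrm{O}(D(L))$ is true here but is not what the argument needs.
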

This lift is constructed in Section 14 of \cite{B98}. We call $\mathrm{G}(\phi)$ the \textbf{Gritsenko lift} or additive theta lift of $\phi$ as the case where $\phi$ is holomorphic is due to Gritsenko \cite{G88}. If $r$ is a vector of the form $(0, n, r_K, 1, 0)$ with $r_K = r|_K \in K'$, then it follows from the results of \cite{B98} that in a neighborhood of $r^{\perp}$ the Laurent expansion of $\mathrm{G}(\phi)$ begins \begin{equation}\label{eqn:ppart}\mathrm{G}(\phi)(Z) = \left( \sum_{\lambda=1}^{\infty} c \Big( \lambda^2 n, \lambda r_K \Big) \cdot \frac{(k-1)!}{(2\pi i \lambda)^k} \right) \cdot \frac{1}{\langle r, Z \rangle^k} + h(Z),\end{equation} where $h$ is holomorphic. Under $\mathrm{\widetilde O}(L)$ every hyperplane $r^{\perp}$ is equivalent to one of this form. The only coefficients that appear in Equation (\ref{eqn:ppart}) are singular coefficients of $\phi$, and the sum is finite because $c(n, r)$ is zero whenever $(n, r)$ has sufficiently negative hyperbolic norm. \\

The more famous construction of \cite{B98} is essentially the exponential of Theorem \ref{th:gritsenko} applied to weight $k = 0$.

\begin{defn}
Let $\phi(\tau, z) = \sum_{n, r} c(n, r) q^n \zeta^r$ be a weakly holomorphic Jacobi form of weight $0$ and index $K$ in which all Fourier coefficients $c(n, r)$ are integers. The \textbf{theta block} associated to $\phi$ is the infinite product
\begin{align*} \Theta_{\phi}(\tau, z) &= \eta(\tau)^{c(0, 0)} \cdot \prod_{r > 0} \Big( \frac{\theta_{11}(\tau, \langle r, z \rangle)}{\eta(\tau)} \Big)^{c(0, r)} \\ &= q^{\frac{1}{24} \sum_{r \in K'} c(0, r)} \prod_{r > 0} (\zeta^{r/2} - \zeta^{-r/2}) \times \prod_{r \in K'} \prod_{n=1}^{\infty} (1 - q^n \zeta^r)^{c(0, r)}, \end{align*} where $$\eta(\tau) = q^{1/24} \prod_{n=1}^{\infty} (1 - q^n)$$ is the Dedekind eta function and where $$\theta_{11}(\tau, z) = \sum_{n \in \frac{1}{2}+ \mathbb{Z}} (-1)^{n-1/2} q^{n^2 / 2} e^{2\pi i n z}$$ is the odd Jacobi theta function.
\end{defn}

\begin{thm} \label{th:borcherds}
Suppose $\phi(\tau, z) = \sum_{n, r} c(n, r) q^n \zeta^r$ is a weakly holomorphic Jacobi form of weight $0$ and index $K$ in which all Fourier coefficients $c(n, r)$ are integers.

Then $$\Psi(Z) := \Theta_{\phi}(\tau, z) e^{2\pi i C w} \times \exp \Big( \sum_{m=1}^{\infty} \phi \Big| V_m  \cdot e^{2\pi i m w} \Big)$$ converges on an open subset of $\mathcal{A}$ and it is the Fourier-Jacobi expansion of a meromorphic modular form $\mathrm{B}(\phi)$ of weight $\frac{1}{2}c(0, 0)$ on the discriminant kernel $\mathrm{\widetilde O}(L)$. The zeros and poles of $\mathrm{B}(\phi)$ all lie on hyperplanes $r^{\perp}$. Here, $C$ is the constant $$C = \frac{1}{\mathrm{rank}(K)} \cdot \sum_{r \in K'} c(0, r) \frac{\langle r, r\rangle}{2}.$$
If $r$ is a vector of the form $(0, n, r_K, 1, 0)$ then the order of the zero or pole of $\Psi$ along $r^{\perp}$ is $$\mathrm{ord} \Big( \Psi; r^{\perp} \Big) = \sum_{\lambda =1}^{\infty} c \Big( \lambda^2 n, \lambda r_K \Big).$$
\end{thm}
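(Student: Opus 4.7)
The plan is to realize $\mathrm{B}(\phi)$ as the exponential of Borcherds's regularized singular theta lift applied to the vector-valued modular form attached to $\phi$. Using the theta decomposition
\[
\phi(\tau, z) = \sum_{\mu \in K'/K} f_\mu(\tau) \, \theta_{K,\mu}(\tau, z),
\]
one converts $\phi$ into a weakly holomorphic vector-valued modular form $F_\phi = (f_\mu)$ of weight $-\mathrm{rank}(K)/2$ for the Weil representation of $\mathrm{Mp}_2(\mathbb{Z})$ on $\mathbb{C}[K'/K]$, whose Fourier principal part encodes precisely the singular coefficients $c(n,r)$ of $\phi$. The theorem then reduces to showing that the scalar output of Borcherds's singular theta lift applied to $F_\phi$ (Sections 13--14 of \cite{B98}) coincides with the product $\Psi$ written in the statement.

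Next I would introduce the regularized theta integral
\[
\Phi(Z, F_\phi) = \int_{\mathcal{F}}^{\mathrm{reg}} \bigl\langle F_\phi(\tau), \Theta_L(\tau, Z) \bigr\rangle \, \frac{du\,dv}{v^2},
\]
where $\Theta_L$ is the vector-valued Siegel theta function of $L$ and $\mathcal{F}$ is a standard fundamental domain for $\mathrm{SL}_2(\mathbb{Z})$, the integral regularized in the Harvey--Moore sense to handle the non-cuspidal behavior of $F_\phi$ at $i\infty$. Term-by-term modular invariance of the inner product makes $\Phi$ well-defined, and invariance of $\Theta_L$ in the $Z$-variable forces $\Phi$ to be $\widetilde{\mathrm{O}}(L)$-invariant as a real-analytic function on $\mathcal{D}$ away from rational quadratic divisors. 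A local analysis near each $r^\perp$ for negative-norm $r \in L'$ shows that $\Phi$ carries logarithmic singularities of specified order; off these divisors one has $\Phi = -4\log|\Psi_F| + (\text{elementary Petersson factor}) + \text{const}$ for a meromorphic function $\Psi_F$ on $\mathcal{A}$. Homogeneity and $\widetilde{\mathrm{O}}(L)$-invariance of $\Phi$ then translate into the two functional equations for a meromorphic modular form of weight $\tfrac{1}{2}c(0,0)$ on $\widetilde{\mathrm{O}}(L)$.

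The identification of $\Psi_F$ with the product $\Psi$ of the statement is carried out by computing the Fourier--Jacobi expansion at the $0$-dimensional cusp determined by the splitting $L = H \oplus H \oplus K(-1)$. Unfolding $\Theta_L$ along the two hyperbolic planes produces three types of contribution: the coefficients $c(0,r)$ assemble into the logarithm of the theta block $\Theta_\phi(\tau,z)$ (with the $\eta$-factor coming from $c(0,0)$ and the $\theta_{11}$-factors from $c(0,r)$, $r\neq 0$); the zero-norm Weyl-vector contributions integrate to the prefactor $e^{2\pi i C w}$ with $C$ exactly the constant of the statement; and the terms with $m\ge 1$ produce $\sum_{m\ge 1}(\phi | V_m)\, e^{2\pi i m w}$, in complete parallel with the Gritsenko lift of Theorem \ref{th:gritsenko}. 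Exponentiating matches the stated product formula, and the restriction $r > 0$ is exactly the Weyl chamber governing absolute convergence of the infinite product on an open subset of $\mathcal{A}$. The order $\sum_{\lambda\ge 1} c(\lambda^2 n, \lambda r_K)$ along each hyperplane $(0,n,r_K,1,0)^\perp$ is read off directly from the local exponentiation of the log-singularity of $\Phi$, in analogy with the principal part formula \eqref{eqn:ppart} for the additive lift.

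The main technical obstacle is the analysis of the regularized theta integral itself: one must justify interchange of sum and integration in the unfolding, control the growth of the integrand at the boundary of $\mathcal{F}$ where $F_\phi$ has its principal part, and verify that the Weyl-chamber structure produced locally at each cusp glues consistently so that the formal product $\Psi$ extends to a single meromorphic modular form on all of $\mathcal{A}$ rather than just on a chamber. Once this analytic machinery of \cite{B98} is imported, every remaining step is a bookkeeping exercise with the Fourier coefficients $c(n,r)$ and with the behavior of $\eta$, $\theta_{11}$, and $\wp^{(k-2)}$ under the $V_N$-operators.
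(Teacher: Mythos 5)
Your proposal is correct and follows essentially the same route as the paper, which does not reprove this statement but delegates it entirely to the singular theta lift construction of Section 13 of \cite{B98} (in the Jacobi-form formulation of \cite{WW21}); your sketch --- theta decomposition into a vector-valued form of weight $-\mathrm{rank}(K)/2$, the regularized theta integral, the logarithmic singularity analysis, and the unfolding at the cusp producing the theta block, the Weyl-vector factor $e^{2\pi i C w}$, and the $V_m$-terms --- is a faithful outline of exactly that cited argument.
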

This form is constructed in Section 13 of \cite{B98} and it is called the \textbf{Borcherds lift} of $\phi$. The setup in terms of Jacobi forms follows the discussion of \cite{WW21}.

\section{Certain algebras of modular forms}

Modular forms with respect to the orthogonal groups of the lattices $H \oplus H \oplus A_1(-1)^{\oplus n}$ will play an important role in later sections, as these occur as transcendental lattices of K3 surfaces polarized by $S$ in (\ref{eqn:lattices}). These were studied by Woitalla in \cite{MR3883325} and they are the special case of root systems of type $B$ in \cite{MR4130466}. The generators of the algebra of modular forms can be chosen to have a particularly simple structure. \\

The root lattice of the simple Lie algebra of type $B_n$ is $A_1^{\oplus n}$, i.e., $\mathbb{Z}^n$ with the bilinear form $$\langle x, y \rangle = 2 \cdot \sum_{i=1}^n x_i y_i.$$ The Weyl group $W(B_n)$ is then the full orthogonal group of $A_1^{\oplus n}$.

Jacobi forms of index $A_1^{\oplus n}$ can be constructed from rank one Jacobi forms by ``exterior" multiplication. Let $$\phi_{-2, 1}(\tau, z) = \frac{\theta_{11}(\tau, z)^2}{\eta(\tau)^6} = (\zeta - 2 + \zeta^{-1}) \prod_{n=1}^{\infty} \frac{(1 - q^n \zeta)^2 (1 - q^n \zeta^{-1})^2}{(1 - q^n)^4}, \quad q = e^{2\pi i \tau}, \zeta = e^{2\pi i z}$$ and $$\phi_{0, 1}(\tau, z) = -\frac{3}{\pi^2}\wp(\tau, z) \cdot \phi_{-2, 1}(\tau, z)$$ be the basic weak Jacobi forms, which generate the $\mathbb{C}[E_4, E_6]$-module of even-weight weak Jacobi forms, as defined in Chapter 9 of \cite{EZ85}. \\

It follows from Theorem 2.4 of \cite{WW23} that the products $$\phi_{*, 1}(\tau, z_1) \cdot ... \cdot \phi_{*, 1}(\tau, z_n), \qquad \text{with} \ * \in \{-2, 0\}$$ generate the $\mathbb{C}[E_4, E_6]$-module of weak Jacobi forms of lattice index $A_1^{\oplus n}$ which are invariant under substitutions $z_i \mapsto -z_i$, and therefore the Weyl group $W(n A_1)$. \\

For $0 \le k \le n$, the symmetric expressions $$f_k(\tau, z_1,...,z_n) := \sum_{\substack{I \subseteq \{1,...,n\} \\ \#I = k}} \Big( \prod_{i \in I} \phi_{-2, 1}(\tau, z_i) \Big) \Big( \prod_{i \notin I} \phi_{0, 1}(\tau, z_i) \Big)$$ are weak Jacobi forms of weight $-2k$ and index $K$ that are invariant under the Weyl group $W(B_n)$. Equivalently, we can write $$f_k(\tau, z_1,...,z_n) = \Big( -\frac{3}{\pi^2} \Big)^k \Phi(\tau, z_1,...,z_n) \cdot \sigma_k \Big( \wp(\tau, z_1),..., \wp(\tau, z_n) \Big),$$ where $$\Phi(\tau, z_1) = \phi_{-2, 1}(\tau, z_1) \cdot ... \cdot \phi_{-2, 1}(\tau, z_n)$$ and where $\sigma_k$ is the $k$th elementary symmetric polynomial. \\

Now let $L_n = H \oplus H \oplus A_1(-1)^{\oplus n}$. The following structure theorem holds (cf. \cite{MR3883325}, \cite{MR4130466}):

\begin{thm}\label{thm:Bn} For $1 \le n \le 4$, the algebra of modular forms $M_*(\mathrm{O}^+(L_n))$ is generated by the Eisenstein series $E_4$ and $E_6$ and by the theta lifts $$\phi_{12 - 2k} := \mathrm{G}\Big( \Delta \cdot f_k \Big), \quad 0 \le k \le n,$$ where $\Delta(\tau) = q \prod_{n=1}^{\infty} (1 - q^n)^{24}$ is the cusp form of weight $12$. Moreover, the generator $\phi_{12 - 2n}$ has a representation as a Borcherds product: $$\phi_{12 - 2n} = \mathrm{B}(\psi_n),$$ where $\psi_n$ is the weak Jacobi form of weight $0$ defined by \begin{align*} \psi_n(\tau, z_1,...,z_n) &:= \frac{1}{2 \cdot \sqrt{\Delta(\tau)}} \Big( \theta_{00}(\tau, z_1) ... \theta_{00}(\tau, z_n) \theta_{00}(\tau)^{12 - n} \\ &\quad\quad\quad\quad\quad\quad - \theta_{01}(\tau, z_1)...\theta_{01}(\tau, z_n) \theta_{01}(\tau)^{12 - n} \\ &\quad\quad\quad\quad\quad\quad - \theta_{10}(\tau, z_1)...\theta_{10}(\tau, z_n) \theta_{10}(\tau)^{12-n} \Big). \end{align*}
\end{thm}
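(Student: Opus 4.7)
The plan splits naturally into three steps: (i) verify that the proposed forms $\phi_{12-2k}$ are well-defined holomorphic modular forms on $\mathrm{O}^+(L_n)$ of the claimed weights, (ii) promote this to a generation statement using the known Hilbert series, and (iii) identify $\phi_{12-2n}$ with the Borcherds product $\mathrm{B}(\psi_n)$.

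For step (i), I first note that $f_k$ is a Weyl-invariant weak Jacobi form of weight $-2k$ and index $A_1^{\oplus n}$ by Theorem 2.4 of \cite{WW23}, since it is a symmetric polynomial in the weak Jacobi forms $\phi_{-2,1}(\tau,z_i),\phi_{0,1}(\tau,z_i)$. Multiplying by $\Delta$, which vanishes to order one at the cusp and has weight $12$, produces a holomorphic Jacobi form $\Delta\cdot f_k$ of weight $12-2k$ and index $A_1^{\oplus n}$ (the factor $\Delta$ shifts every Fourier coefficient $c(n,r)$ in such a way that the $q$-index exceeds $\langle r,r\rangle/2$). Because $f_k$ is symmetric in the coordinates and even in each $z_i$, it is invariant under the full Weyl group $W(B_n)=\mathrm{Aut}((A_1^{\oplus n})'/A_1^{\oplus n})$; Theorem~\ref{th:gritsenko} then produces a holomorphic modular form $\phi_{12-2k}=\mathrm{G}(\Delta\cdot f_k)$ of weight $12-2k$ on the full group $\mathrm{O}^+(L_n)$.

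For step (ii), the structure of $M_*(\mathrm{O}^+(L_n))$ is already fixed by Theorem~\ref{thm_generators}: the generator weights are $4,6$ and $12-2k$ for $0\le k\le n$, with none of these relations repeated beyond the polynomial algebra structure. It therefore suffices to show that the candidates $E_4,E_6,\phi_{12},\phi_{10},\ldots,\phi_{12-2n}$ are algebraically independent. I would verify this by inspecting the leading Fourier--Jacobi coefficient, which sends $\phi_{12-2k}\mapsto\Delta\cdot f_k$. Linear independence of $\{\Delta\cdot f_k\}_{k=0}^{n}$ in the space of Jacobi forms follows from the fact that the $f_k$, being the elementary symmetric polynomials in the $\wp(\tau,z_i)$ multiplied by the common factor $\Phi$, are algebraically independent over $\mathbb{C}[E_4,E_6]$. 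A Hilbert-series comparison with the polynomial ring in the generators of the weights given by Theorem~\ref{thm_generators} then upgrades algebraic independence to generation.

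For step (iii), the identity $\phi_{12-2n}=\mathrm{B}(\psi_n)$ is the most delicate point, and I expect it to be the main obstacle. One first verifies that $\psi_n$ is a weakly holomorphic Jacobi form of weight $0$ and index $A_1^{\oplus n}$. The three theta-constant products transform into one another under $\mathrm{SL}_2(\mathbb{Z})$ with the same eighth-root-of-unity factors that appear in the transformation of $\sqrt{\Delta(\tau)}$, so the difference in parentheses transforms with weight $6-n/2$ and picks up precisely the theta multiplier that is cancelled by $1/\sqrt{\Delta}$; the Jacobi elliptic transformation is automatic from the $\theta_{ab}(\tau,z_i)$ factors. The Fourier expansion is then obtained from Jacobi triple product identities, and a short computation gives $c(0,0)=24-4n$, matching weight $12-2n$, together with a constant $C$ in Theorem~\ref{th:borcherds} and a principal part that is concentrated on the Heegner divisors along which the Borcherds product has no net pole. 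Having shown that $\mathrm{B}(\psi_n)$ is holomorphic of weight $12-2n$ on $\mathrm{O}^+(L_n)$, I would match it with $\phi_{12-2n}$ by comparing the lowest Fourier--Jacobi coefficient: the Borcherds product expansion of Theorem~\ref{th:borcherds} exhibits the coefficient as the theta-block $\Theta_{\psi_n}$, which after using $\eta^{24-4n}\cdot\prod(\theta_{11}(\tau,z_i)/\eta(\tau))^2=\Delta(\tau)\cdot f_n(\tau,z_1,\ldots,z_n)$ coincides precisely with the lowest coefficient $\Delta\cdot f_n$ of $\phi_{12-2n}$, forcing the two forms to agree.
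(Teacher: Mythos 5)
This theorem is not proved in the paper at all: it is imported from Woitalla \cite{MR3883325} and Wang--Williams \cite{MR4130466}, and the identity $\phi_{12-2n}=\mathrm{B}(\psi_n)$ is explicitly attributed to the general additive-vs-multiplicative lift comparison of \cite{WW21}. So there is no in-paper argument to measure you against; what follows assesses your sketch on its own terms.

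Steps (i) and (ii) are essentially sound but under-justified at two points. First, the parenthetical claim that multiplying by $\Delta$ makes every coefficient satisfy $n>\langle r,r\rangle/2$ is exactly where the hypothesis $n\le 4$ enters: the $q^0$-term of $f_k$ contains monomials $\zeta_1^{\pm1}\cdots\zeta_n^{\pm1}$ with $\langle r,r\rangle/2=n/4$, so $\Delta\cdot f_k$ has a coefficient of hyperbolic norm $1-n/4$, which is nonnegative only for $n\le 4$ (this is precisely why Theorem~\ref{thm:B5} produces meromorphic lifts for $n=5$). You use the fact without flagging where it would fail. Second, your generation argument leans on Theorem~\ref{thm_generators}, which the paper also imports from the same sources, so your proof is not independent of the literature you would be reproving; granting Theorem~\ref{thm_generators}, the Hilbert-series upgrade from algebraic independence to generation is fine, but ``linear independence of $\{\Delta f_k\}$'' is not the relevant notion --- you need algebraic independence of the lifts, which requires the associated-graded argument with respect to the $s$-adic valuation on Fourier--Jacobi expansions (monomials in the $\phi_{12-2k}$ have lowest-order term the corresponding monomial in the $\Delta f_k$), not just linear independence of the first coefficients.

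Step (iii) contains the genuine gap. Comparing the lowest Fourier--Jacobi coefficients of $\mathrm{B}(\psi_n)$ and $\mathrm{G}(\Delta f_n)$ does \emph{not} force the two forms to agree: their difference could be any form of weight $12-2n$ whose Fourier--Jacobi expansion begins at order $s^2$, and ruling such forms out requires a separate dimension count (or the full Gritsenko--Borcherds duality of \cite{WW21}, which is the route the cited literature actually takes). Moreover, before any comparison you must establish that $\mathrm{B}(\psi_n)$ is holomorphic of the right weight, which means computing the complete singular part of $\psi_n$ and checking that every multiplicity $\sum_{\lambda\ge 1}c(\lambda^2 m,\lambda r)$ is nonnegative; you assert this without computation, and your stated value $c(0,0)=24-4n$ is likewise asserted rather than derived from the theta expansion --- a direct expansion of the displayed formula is delicate (the na\"ive $q^{1/2}$-coefficient count gives $24-2n$), so the normalization conventions for the theta functions of lattice index $A_1$ need to be pinned down before the weight even comes out right. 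As written, step (iii) is a plan rather than a proof.
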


The identity $\phi_{12 - 2n} = \mathrm{B}(\psi_n)$ and the expression in terms of theta-nulls is a special case of a general identity between additive and multiplicative lifts that was proved in \cite{WW21}.
Here $\theta_{00}$, $\theta_{01}$ and $\theta_{10}$ are the even Jacobi theta functions \begin{align*} \theta_{00}(\tau, z) &= \sum_{n = -\infty}^{\infty} q^{n^2 / 2} \zeta^n \\ \theta_{01}(\tau, z) &= \sum_{n=-\infty}^{\infty} (-1)^n q^{n^2 / 2} \zeta^n \\ \theta_{10}(\tau, z) &= \sum_{n \in \frac{1}{2}+\mathbb{Z}} q^{n^2 / 2} \zeta^n, \end{align*} and by abuse of notation we also write $\theta_{ij}(\tau) = \theta_{ij}(\tau, 0)$ for the theta-nulls. \\

From $\phi_{12-2n} = \mathrm{B}(\psi_n)$ one can conclude that $\phi_{12 - 2n}$ has divisor exactly $$\mathrm{div}(\phi_{12 - 2n}) = 2 \cdot \sum_{\substack{r \in L_n' \\ \langle r, r \rangle = 1/2}} r^{\perp},$$ i.e., double zeros on hyperplanes $r^{\perp}$ with $\langle r,r \rangle = 1/2$. \\

When $n = 5$, the algebra of modular forms is more complicated but it is contained in an algebra of meromorphic forms that has a simple description. There are two classes of hyperplanes $r^{\perp}$ with $\langle r, r \rangle = 1/2$ that are inequivalent under the action of $\mathrm{O}^+(L)$. Using the bilinear form $$\langle (a, b, x_1,...,x_5, c, d), (a, b, x_1,...,x_5, c, d) \rangle = ad - bc + 2 (x_1^2 + ... + x_5^2),$$ we have the following representatives: \\
(1) $r_1 = (0, 0, 1/2, 0, 0, 0, 0, 0, 0)$; \\
(2) $r_2 = (0, 1, 1/2, 1/2, 1/2, 1/2, 1/2, 1, 0)$.\\

\begin{thm}\label{thm:B5} The algebra of meromorphic modular forms $M_*^!(\mathrm{O}^+(L_5))$ with poles on $r_2^{\perp}$ and its orbit under $\mathrm{O}^+(L_5)$ is generated by the Eisenstein series $E_4$ and $E_6$ and by the meromorphic theta lifts $$\phi_{12 - 2k} := \mathrm{G} \Big( \Delta \cdot f_k \Big), \quad 0 \le k \le 5.$$ Moreover, the generator $\phi_{2}$ has a representation as a Borcherds product: $$\phi_2 = \mathrm{B}(\psi_5),$$ where $\psi_5$ was defined above.
\end{thm}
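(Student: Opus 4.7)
The plan splits into three parts: checking that each $\phi_{12-2k}$ for $0 \le k \le 5$ is a meromorphic modular form on $\mathrm{O}^+(L_5)$ with poles supported at most on the $\mathrm{O}^+(L_5)$-orbit of $r_2^\perp$; establishing the Borcherds product identity $\phi_2 = \mathrm{B}(\psi_5)$; and finally proving that $E_4$, $E_6$ together with the $\phi_{12-2k}$ generate the meromorphic ring $M_*^!(\mathrm{O}^+(L_5))$.

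For the first part, I would verify that $\Delta \cdot f_k$ is a weak Jacobi form of weight $12-2k$ and index $A_1^{\oplus 5}$, invariant under $W(B_5) = \mathrm{Aut}(K'/K)$. This follows directly from the construction of $f_k$ as a symmetric polynomial in $\phi_{-2,1}$ and $\phi_{0,1}$, combined with the $\mathrm{SL}_2(\mathbb{Z})$-modularity of $\Delta$. Theorem \ref{th:gritsenko} then produces $\phi_{12-2k} = \mathrm{G}(\Delta \cdot f_k) \in M^!_{12-2k}(\mathrm{O}^+(L_5))$. For the identity $\phi_2 = \mathrm{B}(\psi_5)$, I would invoke the general additive-multiplicative theta-lift identity of \cite{WW21} after first verifying, via standard theta-null identities, that $\psi_5$ is a $W(B_5)$-invariant weak Jacobi form of weight $0$ and index $A_1^{\oplus 5}$.

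With $\phi_2 = \mathrm{B}(\psi_5)$ established, Theorem \ref{th:borcherds} expresses the order of $\phi_2$ along any hyperplane $r^\perp$ as a finite sum of singular Fourier coefficients of $\psi_5$. A direct expansion of the defining theta products shows that positive contributions (zeros) occur along the $\mathrm{O}^+(L_5)$-orbit of $r_1^\perp$, while negative contributions (poles) occur along the orbit of $r_2^\perp$. This places $\phi_2$ in $M_2^!(\mathrm{O}^+(L_5))$ with the stated pole behavior. The remaining $\phi_{12-2k}$ for $k < 5$ are Gritsenko lifts of holomorphic Jacobi forms and hence have no poles at all; this is the essential new feature of the $n=5$ case, forcing the transition from a holomorphic algebra (as in Theorem \ref{thm:Bn}) to the meromorphic algebra in Theorem \ref{thm:B5}.

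The main obstacle is the generation statement. I would proceed by induction on weight, using the restriction map along a sublattice embedding $L_4 \hookrightarrow L_5$ obtained by setting one variable $z_i$ to zero. By Theorem \ref{thm:Bn} with $n=4$, the target $M_*(\mathrm{O}^+(L_4))$ is freely generated by $E_4$, $E_6$ and the analogous theta lifts for $0 \le k \le 4$; matching a given $f \in M^!_k(\mathrm{O}^+(L_5))$ against its restriction yields a polynomial $P$ in $E_4$, $E_6$, $\phi_{12-2k}$ with $(f - P)|_{L_4} = 0$. The difference $f - P$ is then divisible by an explicit modular form cutting out the restriction hyperplane, possibly after multiplying by a power of $\phi_2$ to absorb pole contributions along $r_2^\perp$, and one descends to strictly lower weight. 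The delicate step is to control cusp contributions in the descent, which I would handle through the Koecher principle together with a Hilbert-series comparison against the Poincar\'e series tabulated in \cite{MR4130466}.
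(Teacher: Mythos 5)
The paper itself does not prove this statement: it is quoted from \cite{MR3883325} and \cite{MR4130466} via the remark ``The structure of $M_*^!(L_5)$ was computed in Theorem 5.24 of \cite{WW21}'', and the identity $\phi_2 = \mathrm{B}(\psi_5)$ is likewise attributed to \cite{WW21}. So your attempt to give an actual proof is already a different route, and the sensible comparison is against the argument in that reference (restriction to sublattices plus induction on the weight), which is indeed the strategy you sketch. However, your proposal contains one claim that is simply false and that inverts the whole point of the theorem: you assert that ``the remaining $\phi_{12-2k}$ for $k<5$ are Gritsenko lifts of holomorphic Jacobi forms and hence have no poles at all.'' They are not. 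For index $A_1^{\oplus 5}$, the $q^0$-term of every $f_k$ contains the monomials $\zeta_1^{\pm 1}\cdots\zeta_5^{\pm 1}$, i.e.\ Fourier coefficients $c(0,r)$ with $r=(\pm\tfrac12,\dots,\pm\tfrac12)$ and $\langle r,r\rangle/2 = 5/4$; hence $\Delta\cdot f_k$ has the coefficients $c(1,r)$ with hyperbolic norm $1-5/4<0$ for \emph{every} $0\le k\le 5$. These are exactly the coefficients that are non-singular when $n\le 4$ (where $1-n/4\ge 0$) and become singular at $n=5$. By Theorem~\ref{th:gritsenko} and Equation~(\ref{eqn:ppart}), each $\phi_{12-2k}=\mathrm{G}(\Delta f_k)$ therefore acquires a pole of order $12-2k$ along the orbit of $r_2^{\perp}$ (the leading polar coefficient along $r_2^\perp$ is a nonzero multiple of $c(1,(\tfrac12,\dots,\tfrac12))=\binom{5}{k}\ne 0$). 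This is the actual reason the $n=5$ algebra must be taken meromorphic, and it is also forced by the paper's later computations: e.g.\ in Section 5.6 the statement that $b_4=12\chi_4-360\chi_2^2$ has exactly a double pole on $r^{\perp}$ requires $\chi_4$ to have an order-four pole there cancelling the leading polar part of $\chi_2^2$.

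Your generation argument is the right general idea but, as written, has gaps. Before you can ``match $f$ against its restriction'' using Theorem~\ref{thm:Bn} for $L_4$, you must show that the restriction of a form with poles on the orbit of $r_2^{\perp}$ to a hyperplane $r_i^{\perp}$ is \emph{holomorphic}; this holds, but only because every vector $r$ in the all-halves coset satisfies $\langle r, r_i\rangle \in \tfrac12+\mathbb{Z}$, so its orthogonal projection into $L_4\otimes\mathbb{Q}$ has norm $\tfrac12 - 2\langle r,r_i\rangle^2\le 0$ and $r^{\perp}\cap r_i^{\perp}$ misses the period domain. You also need the restrictions of the proposed generators to surject onto the generators of $M_*(\mathrm{O}^+(L_4))$ (the weight-4 Eisenstein series requires a correction term, cf.\ the map $F_4\mapsto E_4+48\chi_4^{L_4}$ in Section 5.6), and the concluding ``Hilbert-series comparison against the Poincar\'e series tabulated in \cite{MR4130466}'' does not apply: those tables concern the holomorphic rings, not $M_*^!$. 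Finally, the divisibility step should be justified by noting that an $\mathrm{O}^+(L_5)$-invariant form vanishing on the reflection hyperplane $r_i^{\perp}$ vanishes there to even order, so that division by $\phi_2$ (which has double zeros exactly on the orbit of $r_1^{\perp}$ and poles only on the admissible divisor) keeps you inside $M_*^!$ while lowering the weight by $2$.
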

In particular, $\phi_2$ has divisor exactly $$\mathrm{div}(\phi_2) = 2 \cdot \sum_{\substack{r \in L_n' \\ \langle r, r \rangle = 1/2 \\ r \; \text{eq. to} \; r_1}} r^{\perp} - 2 \cdot \sum_{\substack{r \in L_n' \\ \langle r, r \rangle = 1/2 \\ r \; \text{eq. to} \; r_2}}.$$

The structure of $M_*^!(L_5)$ was computed in Theorem 5.24 of \cite{WW21}.

\section{Computation of van Geemen--Sarti equations}

We now determine the coefficients of five of the van Geemen--Sarti equations (\ref{eqn:vgs_a_intro}) in terms of the generators of $M_*(\mathrm{O}^+(L_n))$ described in the previous section.
The argument is an induction on $n=8-k$, beginning with $n=0$ (such that $K_n = \{0\}$ and $L = H \oplus H$); the general K3 surface in the associated family has Picard rank 18.

\subsection{Rank 18.}

Since the generic transcendental lattice in this family is $$L = L_0 := H \oplus H,$$ the Jacobi forms that lift to modular forms on the orthogonal group are Jacobi forms of index $\{0\}$, i.e., elliptic modular forms of level one.\\

The lattice $L$ can be identified with the lattice of matrices $$\begin{pmatrix} a & b \\ c & d \end{pmatrix}, \quad a, b, c, d \in \mathbb{Z}$$ with quadratic form given by the determinant. The spinor kernel $\mathrm{O}^+(L)$ contains two copies of $\mathrm{SL}_2(\mathbb{Z})$, one acting by multiplication from the left, the other from the right, and in fact it is generated by these two copies of $\mathrm{SL}_2(\mathbb{Z})$ together with the transpose (cf. \cite{FH00}*{proof of Lemma 4.4}). \\

This implies that modular forms for $F \in M_k(\mathrm{O}^+(L))$ are linear combinations of products $$f(\tau_1) g(\tau_2), \quad f, g \in M_k(\mathrm{SL}_2(\mathbb{Z}))$$ that are symmetric in the two variables $\tau_1, \tau_2$,  i.e., $$M_*(\mathrm{O}^+(L)) = \mathrm{Sym}^2(M_*(\mathrm{SL}_2(\mathbb{Z}))).$$ It is not hard to see that the graded ring $M_*(\mathrm{O}^+(L))$ is therefore a free algebra on three generators, $$M_*(\mathrm{O}^+(L)) = \mathbb{C} \Big[ E_4 \otimes E_4, E_6 \otimes E_6, \Delta \otimes \Delta \Big].$$ 

The general K3 surface in the family --according to Proposition~\ref{prop:weierstrass}-- is described by the equation \begin{equation}\label{eqn:K3rank18} y^2 = x^3 + A(t) x^2 + B(t) x, \end{equation} where $A(t) = t^3 + a_4 t + a_6$ and where in this case $B = b_{12}$ is constant.
The discriminant of the right-hand side of Equation (\ref{eqn:K3rank18}) is $B^2 (A^2 - 4B)$, which itself is a polynomial of $t$ with discriminant $$D = 4096 b_{12}^3 \cdot \Big( 16a_4^6 + 216 a_4^3 a_6^2 + 729 a_6^4 + 864 a_4^3 b_{12} - 5832a_6^2 b_{12} + 11664 b_{12}^2 \Big).$$
Here,   $a_4$, $a_6$ and $b_{12}$ are holomorphic modular forms whose weight is indicated by the subscript, and due to the ring structure they are constant multiples of $E_4 \otimes E_4$, $E_6 \otimes E_6$ and $\Delta \otimes \Delta$, respectively (as $a_4$ and $a_6$ live in a one-dimensional space of modular forms, and $B = b_{12}$ is never allowed to vanish). \\

The weight 24 form $D / b_{12}^3$ is a modular form that vanishes on the discriminant locus $$\Big\{ (\tau_1, \tau_2): \; \mathrm{SL}_2(\mathbb{Z}) \cdot \tau_1 = \mathrm{SL}_2(\mathbb{Z}) \cdot \tau_2 \Big\}$$ and is therefore a constant multiple of the form $$F(\tau_1, \tau_2) := \Delta(\tau_1)^2 \Delta(\tau_2)^2 \Big( j(\tau_1) - j(\tau_2) \Big)^2,$$ where $j$ is the $j$-invariant. (Note that $F = \mathrm{B}(\psi)$ for the Jacobi form $$\psi(\tau) = 2j(\tau) - 1440 = 2q^{-1} + 48 + 2 \cdot 196884q + O(q^2)$$ of weight zero.) By comparing coefficients between that form and $$16a_4^2 + 216a_4^3 a_6^2 + 729a_6^4 + 864a_4^3 b_{12} - 5832 a_6^2 b_{12} + 11664b_{12}^2,$$ we find that if $b_{12} = C \cdot \Delta \otimes \Delta$ then we must have $$a_4 = -\frac{1}{48} C^{1/3} \cdot E_4 \otimes E_4 \quad \text{and} \quad a_6 = \frac{1}{864} C^{1/2} \cdot E_6 \otimes E_6.$$
The parameter $C$ is arbitrary, but it is natural to fix $C = 12^6$ such that the van Geemen-Sarti equation simplifies to 

\begin{equation} \label{eqn:rank18sol} y^2 = x^3 + \Big( t^3 - 3 (E_4 \otimes E_4) t + 2 (E_6 \otimes E_6) \Big)x^2 + 12^6 (\Delta \otimes \Delta ) x. \end{equation}

\subsection{Rank 17.}

The transcendental lattice of the general member of the family is now $$L = L_1 := H \oplus H \oplus A_1(-1).$$ There is a well-known exceptional isomorphism $$\mathrm{Spin}(L) = \mathrm{Sp}_4(\mathbb{Z})$$ that makes it possible to identify modular forms on $\mathrm{O}^+(L)$ with Siegel modular forms of degree two and even weight (and modular forms on $\widetilde{\mathrm{O}}(L)$ with Siegel modular forms of any weight). By a famous theorem of Igusa \cite{Igusa62}, the ring structure is $$M_*(\mathrm{O}^+(L)) = \mathbb{C}[E_4, E_6, \chi_{10}, \chi_{12}],$$ where $E_4, E_6$ are the Siegel Eisenstein series and where the cusp forms $\chi_{10}, \chi_{12}$ are easiest to describe as Gritsenko lifts: $$\chi_{10} = \mathrm{G}(\Delta \cdot \phi_{-2, 1}), \quad \chi_{12} = \mathrm{G}(\Delta \cdot \phi_{0, 1}).$$
(Note that these Gritsenko lifts differ from Igusa's definitions \cite{Igusa62} by multiples of 4 and 12, respectively.)

There is a restriction map, $$\mathrm{res} \colon \   M_*(\mathrm{O}^+(L_1)) \longrightarrow M_*(\mathrm{O}^+(L_0))$$ which through the interpretations $M_*(\mathrm{O}^+(L_1)) = M_{2*}(\mathrm{Sp}_4(\mathbb{Z}))$ and $M_*(\mathrm{O}^+(L_0)) = \mathrm{Sym}^2(M_*(\mathrm{SL}_2(\mathbb{Z}))$ is the map $$\mathrm{res}(F)(\tau_1, \tau_2) = F \left( \begin{pmatrix} \tau_1 & 0 \\ 0 & \tau_2 \end{pmatrix} \right), \quad \tau_1, \tau_2 \in \mathbb{H}.$$ The action of the restriction map on the generators is \begin{equation}\label{eqn:res18} \mathrm{res}(E_4) = E_4 \otimes E_4, \quad \mathrm{res}(E_6) = E_6 \otimes E_6, \quad \mathrm{res}(\chi_{10}) = 0, \quad \mathrm{res}(\chi_{12}) = 12 \cdot (\Delta \otimes \Delta). \end{equation}

The general K3 surface in the family --according to Proposition~\ref{prop:weierstrass}-- is described by the equation $$y^2 = x^3 + A(t) x^2 + B(t) x\,,$$ where $A(t) = t^3 + a_4 t + a_6$ and $B(t) = b_{10} t + b_{12}$ and the previous family is cut out by the equation $b_{10} = 0$.
In view of Equations (\ref{eqn:rank18sol}) and (\ref{eqn:res18}), we have $$a_4 = -3E_4, \quad a_6 = 2 E_6, \quad b_{12} = 12^5 \cdot \chi_{12},$$ and there is a constant $C$ such that $b_{10} = C \cdot \chi_{10}$. We compare the first Fourier coefficients between the discriminant of $A^2 - 4B$ and the reflective product $$\Psi_{60}\left( \begin{pmatrix} \tau & z \\ z & w \end{pmatrix} \right) = q^3 s^3 (q - s)^2 (r^{-1} + 2 + r) + O(q, s)^9, \quad q = e^{2\pi i \tau}, r = e^{2\pi i z}, s = e^{2\pi i w},$$ which is the Borcherds lift of the following weakly holomorphic Jacobi form of weight 0 and index 1, $$\psi = \frac{29}{24} \cdot \frac{E_4^2 E_{4, 1}}{\Delta} + \frac{19}{24} \cdot \frac{E_6 E_{6, 1}}{\Delta} = 2q^{-1} + (2 \zeta^{-2} - 2 \zeta^{-1} + 120 - 2 \zeta + 2 \zeta^2) + O(q).$$ We conclude that $C = -12^5$. Hence, we have derived the van Geemen-Sarti equation

\begin{equation} \label{eqn:rank17sol} y^2 = x^3 + \Big( t^3 - 3 E_4 t + 2 E_6 \Big) x^2 + 12^5 \cdot \Big( -\chi_{10} t + \chi_{12} \Big) x.\end{equation}

\subsection{Rank 16.}

The transcendental lattice of the general K3 surface in this family is $$L = L_2 := H \oplus H \oplus A_1(-1)^{\oplus 2},$$ so the Jacobi forms that are lifted to produce modular forms on the orthogonal group have lattice index $A_1^{\oplus 2}$.

By Theorem \ref{thm:Bn}, the algebra of modular forms of level $\mathrm{O}^+(L)$ is $$M_*(B_2) = \mathbb{C}[E_4, E_6, \chi_8, \chi_{10}, \chi_{12}],$$ where $E_4$ and $E_6$ are the Eisenstein series and where $$\chi_8 = \mathrm{G} \Big( \Delta \phi_{-2,1} \otimes \phi_{-2, 1} \Big), \quad \chi_{10} = \mathrm{G} \Big( \Delta \cdot \phi_{-2, 1} \otimes \phi_{0, 1} + \Delta \cdot \phi_{0, 1} \otimes \phi_{-2, 1} \Big),$$ and $$\chi_{12} = \mathrm{G} \Big( \Delta \cdot \phi_{0,1} \otimes \phi_{0, 1} \Big).$$

There are two obvious embeddings of $A_1$ into $A_1^{\oplus 2}$ and they induce the same restriction map $$\mathrm{res}\colon \ M_*(\mathrm{O}^+(L_2)) \longrightarrow M_*(\mathrm{O}^+(L_1)).$$
The images of the generators are $$\mathrm{res}(E_4^{L_2}) = E_4^{L_1}, \; \mathrm{res}(E_6^{L_2}) = E_6^{L_1}, \; \mathrm{res}(\chi_8^{L_2}) = 0, \; \mathrm{res}(\chi_{10}^{L_2}) = 12 \cdot \chi_{10}^{L_1}, \; \mathrm{res}(\chi_{12}^{L_2}) = 12 \cdot \chi_{12}^{L_1},$$ where the superscript indicates to which algebra the form belongs.\\

We will need the following weak Jacobi forms of weight two which are \emph{not} Weyl-invariant and whose Gritsenko lifts are modular only under the discriminant kernel:

\begin{align*}
f_1(\tau, z_1, z_2) &:= \phi_{-2, 1}(\tau, z_1) E_{4, 1}(\tau, z_2) = (\zeta_1^{-1} - 2 + \zeta_1) + O(q); \\
f_2(\tau, z_1, z_2) &:= E_{4, 1}(\tau, z_1) \phi_{-2, 1}(\tau, z_2) = (\zeta_2^{-1} - 2 + \zeta_2) + O(q); \\
\end{align*}
where $\zeta_j = e^{2\pi i z_j}$ and $q = e^{2\pi i \tau}$. 

By Theorem \ref{th:gritsenko}, the Fourier-Jacobi series of the meromorphic Gritsenko lifts $\mathrm{G}(f_i)$ begin $$\mathrm{G}(f_i) = -\frac{1}{4\pi^2} \wp(\tau, z_i) + f_i(\tau, z_1, z_2) s + O(s^2).$$

The general K3 surface in the family --according to Proposition~\ref{prop:weierstrass}-- is described by the equation $$y^2 = x^3 + (t^3 + a_4 t + a_6) x^2 + (b_8 t^2 + b_{10} t + b_{12}) x,$$ where the previous family is cut out by $b_8 = 0$. In view of the ring structure of $M_*(\mathrm{O}^+(L)),$ the forms $a_4, a_6, b_{10}$ are uniquely determined by their values (\ref{eqn:rank17sol}) when $b_8 = 0$. We obtain $$a_4 = -3 E_4, \quad a_6 = 2 E_6, \quad b_{10} = -12^4 \cdot \chi_{10},$$ and moreover $b_8$ is a multiple of $\chi_8$; say $b_8 = A \cdot \chi_8$. Also, when we factor $$B(t) = b_8 (t - \beta_1) (t - \beta_2) = b_4 t^2 - b_8 (\beta_1 + \beta_2) t + b_4 \beta_1 \beta_2,$$ then $\beta_1$ and $\beta_2$ define meromorphic modular forms of weight two on the discriminant kernel with double poles on rational quadratic divisors on which $b_4$ vanishes. Since there are no nonzero holomorphic modular forms of weight two, we obtain $$\beta_1 = C \cdot \mathrm{G}(f_1) \quad \text{and} \quad \beta_2 = C \cdot \mathrm{G}(f_2)$$ for some other constant $C$.

By comparing the leading Fourier-Jacobi coefficients in
\begin{align*} -b_8 (\beta_1 + \beta_2) &= -AC \cdot \chi_8 \cdot (\mathrm{G}(f_1) + \mathrm{G}(f_2)) \\ &= -AC \Big(\Delta \phi_{-2, 1}(\tau, z_1) \phi_{-2, 1}(\tau, z_2) s + O(s^2) \Big) \cdot \Big( -\frac{1}{4\pi^2} \wp(\tau, z_1) - \frac{1}{4\pi^2} \wp(\tau, z_2) + O(s) \Big) \\ &= -\frac{AC}{12} \Delta(\tau) \Big(\phi_{-2,1}(\tau, z_1) \phi_{0,1}(\tau, z_2) + \phi_{0,1}(\tau, z_1) \phi_{-2, 1}(\tau, z_2) \Big) s + O(s^2) \end{align*} and $$b_{10} = -12^4 \cdot \chi_{10} = -12 \Delta(\tau) \cdot \Big( \phi_{-2, 1}(\tau, z_1) \phi_{0, 1}(\tau, z_2) + \phi_{0, 1}(\tau, z_1) \phi_{-2, 1}(\tau, z_2) \Big) s + O(s^2),$$ we conclude that $AC = 12^5$, and by comparing
\begin{align*} b_8 \beta_1 \beta_2 &= A C^2 \chi_8 \mathrm{G}(f_1) \mathrm{G}(f_2) \\ &= A C^2 \Big(\Delta \phi_{-2,1}(\tau, z_1) \phi_{-2, 1}(\tau, z_2) s + O(s^2) \Big) \Big( -\frac{1}{4\pi^2} \wp(\tau, z_1) + f_1(\tau, z_1, z_2) s + O(s^2) \Big) \\ &\quad \quad \times \Big(-\frac{1}{4\pi^2} \wp(\tau, z_2) + f_2(\tau, z_1, z_2) s + O(s^2) \Big) \\ &= \frac{A C^2}{144} \Delta(\tau) \phi_{0, 1}(\tau, z_1) \phi_{0, 1}(\tau, z_2) s \\ &= \frac{AC^2}{144} \phi_{12} \end{align*} with
 $b_{12}$, we conclude that $$b_{12} = \frac{A C^2}{144}\chi_{12} = 12^3 C \cdot \chi_{12}.$$
When $b_8 = 0$, the result of (\ref{eqn:rank17sol}) implies $$12^5 \chi_{12} = \mathrm{res}(b_{12}) = 12^3 C \cdot 12 \chi_{12},$$ i.e., $C = 12$ and therefore $A = 12^4$. Hence, we have derived the van Geemen-Sarti equation

\begin{equation}\label{eqn:rank16sol} y^2 = x^3 + (t^3 - 3 E_4 t + 2 E_6) x^2 + 12^4 \cdot (\chi_8 t^2 - \chi_{10} t + \chi_{12}) x.
\end{equation}

\subsection{Rank 15}

The transcendental lattice of the general surface in the family is $$L = L_3 := H \oplus H \oplus A_1(-1)^{\oplus 3}.$$ For the full orthogonal group, we have the graded ring structure $$M_*(\mathrm{O}^+(L)) = \mathbb{C}[E_4, E_6, \chi_6, \chi_8, \chi_{10}, \chi_{12}]$$ as a special case of Theorem \ref{thm:Bn}, where $E_4$ and $E_6$ are the Eisenstein series and where $\chi_6, \chi_8, \chi_{10}, \chi_{12}$ are defined as Gritsenko lifts: $$\chi_{6 + 2k} = \mathrm{G}(\varphi_{6 + 2k}), \quad k = 0, 1, 2, 3,$$
where $\varphi_{6 + 2k}$ is the Jacobi cusp form $$\varphi_{6+2k}(\tau, z_1, z_2, z_3) = \Delta(\tau) \phi_{-2, 1}(\tau, z_1) \phi_{-2, 1}(\tau, z_2) \phi_{-2, 1}(\tau, z_3) \sigma_k \Big( \frac{3}{\pi^2} \wp(\tau, z_1),..., \frac{3}{\pi^2} \wp(\tau, z_3) \Big),$$ and $\sigma_0,...,\sigma_3$ are the elementary symmetric polynomials $$\sigma_0 = 1, \quad \sigma_1(x,y,z) = x+y+z, \quad \sigma_2(x,y,z) = xy+yz+zx, \quad \sigma_3(x,y,z) = xyz.$$

Up to the action of the discriminant kernel, there are three embeddings of $H \oplus H \oplus A_1(-1)^{\oplus 2}$ into $L$ as hyperplanes $r_1^{\perp}$, $r_2^{\perp}$, $r_3^{\perp}$, where \begin{equation} \label{eqn:rank16r_i} r_1 = (1, 0, 0), \quad r_2 = (0, 1, 0), \quad r_3 = (0, 0, 1) \in A_1(-1)^{\oplus 3}.\end{equation}
The result of restricting the generating forms along any of these embeddings is $$E_4^{L_3} \mapsto E_4^{L_2}, \; E_6^{L_3} \mapsto E_6^{L_2}, \; \chi_6^{L_3} \mapsto 0, \; \chi_8^{L_3} \mapsto 12 \chi_8^{L_2}, \; \chi_{10}^{L_3} \mapsto 12 \chi_{10}^{L_2}, \; \chi_{12}^{L_3} \mapsto 12 \chi_{12}^{L_2}.$$

There are uniquely determined weak Jacobi forms $f_1, f_2, f_3$ of weight two and lattice index $A_1^{\oplus 3}$ whose Fourier expansions begin as
$$f_j(\tau, z_1, z_2, z_3) = (\zeta_j^{-1} - 2 + \zeta_j) + O(q),$$ with $q = e^{2\pi i \tau}$ and $\zeta_j = e^{2\pi i z_j}$.
In particular, each $\mathrm{G}(f_j)$ has double poles precisely on the orbit of $r_j^{\perp}$ under the discriminant kernel, with Fourier-Jacobi expansion beginning $$\mathrm{G}(f_j)(\tau, z_1, z_2, z_3, w) = -\frac{1}{4\pi^2} \wp(\tau, z_j) + f_j(\tau, z_1, z_2, z_3) s + O(s^2), \quad s = e^{2\pi i w}.$$


The general K3 surface in the family --according to Proposition~\ref{prop:weierstrass}-- is described by the equation $$y^2 = x^3 + A(t) x^2 + B(t)x, $$ where $A(t) = t^3 + a_4 t + a_6$ and $B(t) = b_6 t^3 + b_8 t^2 + b_{10} t + b_{12}$, and the rank 16 family of the previous subsection is cut out by the equation $b_6 = 0$. The forms $a_k$, $b_k$ are modular forms of weight $k$ whose restrictions to any of the hyperplanes $r_j^{\perp}$ are given by Equation (\ref{eqn:rank16sol}), and in view of the algebra structure this uniquely determines $$a_2 = -3 E_4, \quad b_4 = 12^3 \chi_8;$$ in addition, $b_3 = A \cdot \chi_6$ for some nonzero constant $A$.\\

If we factor $$B(t) = b_6 (t - \beta_1) (t - \beta_2) (t - \beta_3),$$ then $\beta_1, \beta_2, \beta_3$ are meromorphic modular forms of weight two on the discriminant kernel of $L$ with double poles on $r_1^{\perp}, r_2^{\perp},$ and $r_3^{\perp}$, respectively, and they are permuted under $\mathrm{O}^+(L)$. This forces (possibly after reordering) $$\beta_1 = C \cdot \mathrm{G}(f_1), \; \beta_2 = C \cdot \mathrm{G}(f_2), \; \beta_3 = C \cdot \mathrm{G}(f_3)$$ with a common nonzero constant $C$. \\

Now, we have \begin{align*} 12^3 \cdot \chi_8 &= b_8 \\ &= -b_6 (\beta_1 + \beta_2 + \beta_3) \\ &= \frac{1}{4\pi^2} AC \chi_6 \sum_{i=1}^3 \wp(\tau, z_i) + O(s) \\ &= -\frac{AC}{12} \Delta \phi_{-2, 1}(\tau, z_1)\phi_{-2,1}(\tau, z_2) \phi_{-2, 1}(\tau, z_3) \sum_{i=1}^3 \frac{\phi_{0,1}(\tau, z_i)}{\phi_{-2, 1}(\tau, z_i)} + O(s), \end{align*} and comparing the constant Fourier-Jacobi coefficients on both sides of this equation yields $AC = -12^4$. The Fourier-Jacobi expansions of $b_{10}$ and $b_{12}$ therefore begin as

\begin{align*} b_{10} &= b_6 (\beta_1 \beta_2 + \beta_1 \beta_3 + \beta_2 \beta_3) \\ &= \frac{AC^2}{144} \cdot \Delta(\tau) \phi_{-2, 1}(\tau, z_1) \phi_{-2, 1}(\tau, z_2) \phi_{-2, 1}(\tau, z_3) \sigma_2 \Big(\frac{3}{\pi^2} \wp(\tau, z_1), \frac{3}{\pi^2} \wp(\tau, z_2), \frac{3}{\pi^2} \wp(\tau, z_3) \Big)s + O(s^2) \\ &= 144 C \cdot \varphi_{10}(\tau, z_1, z_2, z_3) s + O(s^2),\end{align*}
which uniquely determines $C = 12$ and $A = -12^3$ and $b_{10} = -1728 \chi_{10},$ and

\begin{align*} b_{12} &= -b_6\beta_1 \beta_2 \beta_3 \\ &= -\frac{AC^3}{1728} \cdot \Delta(\tau) \phi_{-2, 1}(\tau, z_1) \phi_{-2, 1}(\tau, z_2) \phi_{-2, 1}(\tau, z_3) \Big( -\frac{3}{\pi^2} \wp(\tau, z_1) \cdot -\frac{3}{\pi^2} \wp(\tau, z_2) \cdot -\frac{3}{\pi^2}\wp(\tau, z_3) \Big)s \\ &- \frac{AC^3}{144} \Delta(\tau) \phi_{-2, 1}(\tau, z_1) \phi_{-2, 1}(\tau, z_2) \phi_{-2, 1}(\tau, z_3) \sum_{i=1}^3 \Big[f_i(\tau, z_1, z_2, z_3) \prod_{j \ne i} -\frac{3}{\pi^2} \wp(\tau, z_j)\Big] s^2 + O(s^3), \\ &= 12 C^2 \cdot \varphi_{12}(\tau, z_1, z_2, z_3)s - 12C^2 \cdot \Big( \varphi_{12} \Big| V_2 - 2592 \varphi_6^2 \Big) s^2 + O(s^3).\end{align*} The coefficients of $B(t)$ are therefore $$b_6 = -1728 \chi_6, \quad b_8 = 1728 \chi_8,$$ $$b_{10} = -1728 \chi_{10}, \quad b_{12} = 1728 \Big( \chi_{12} - 2592 \cdot \chi_6^2 \Big).$$

Only the coefficient $a_6$ remains to be determined. Since the value at $b_6 = 0$ is determined by Equation (\ref{eqn:rank16sol}), we have $$a_6 = 2 E_6 + C \cdot \chi_6$$ for some constant $C$.
The contant $C$ can theoretically be determined by computing the discriminant form (as in the derivation of Equation (\ref{eqn:rank17sol}) but this is computationally difficult. Instead, we use a method that we call \emph{paramodular restriction}, which will also be useful in the later sections. It is based on the following observation:

\begin{lem}\label{lem:rank15}
Write $$B(t) = b_6 t^3 + b_8 t^2 + b_{10} t + b_{12} = b_6 (t - \beta_1) (t - \beta_2)(t - \beta_3),$$ where each $\beta_i$ has poles exactly on the $\mathrm{\widetilde O}(L)$-orbit of $r_i^{\perp}$. Then $\beta_i - \beta_j$ vanishes precisely on the $\mathrm{\widetilde O}(L)$-orbit of $(r_i + r_j)^{\perp}$.
\end{lem}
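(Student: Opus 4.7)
The plan is a symmetry argument combined with a divisor match. First, I exhibit an involution swapping $\beta_i \leftrightarrow \beta_j$. Take the reflection $\sigma := \sigma_{r_i - r_j}$: a direct computation in $A_1(-1)^{\oplus 3}$ gives $\sigma(r_i) = r_j$, $\sigma(r_j) = r_i$, $\sigma(r_k) = r_k$ for $k \notin \{i,j\}$, and $\sigma$ fixes $H^{\oplus 2}$ pointwise. Since $(r_i - r_j)^2 = -4$ is negative, $\sigma$ acts trivially on a maximal positive-definite $2$-plane and therefore preserves the connected component $\mathcal{D}$, so $\sigma \in \mathrm{O}^+(L)$. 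It does not lie in $\widetilde{\mathrm{O}}(L)$ (it acts nontrivially on $L'/L \cong (\mathbb{Z}/2)^3$), but it does normalize $\widetilde{\mathrm{O}}(L)$, since $\widetilde{\mathrm{O}}(L)$ is the kernel of the natural map $\mathrm{O}^+(L) \to \mathrm{Aut}(L'/L)$.

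The pullback $\sigma^*\beta_i$ is then a meromorphic $\widetilde{\mathrm{O}}(L)$-modular form of weight $2$ whose pole divisor is exactly the orbit of $r_j^\perp$, with order two. Uniqueness of such forms up to a scalar (coming from their characterization as $C \cdot \mathrm{G}(f_k)$), together with the cyclic symmetry in the construction of the $\beta_k$, forces $\sigma^*\beta_i = \beta_j$. Hence $\beta_i - \beta_j$ is $\sigma$-antisymmetric and must vanish on the fixed hyperplane $(r_i - r_j)^\perp \cap \mathcal{D}$. By $\widetilde{\mathrm{O}}(L)$-invariance of $\beta_i - \beta_j$ this propagates to the entire $\widetilde{\mathrm{O}}(L)$-orbit; and since $r_i + r_j$ and $r_i - r_j$ have the same $L$-norm and the same class modulo $L$, Eichler's criterion identifies the $\widetilde{\mathrm{O}}(L)$-orbits of $(r_i - r_j)^\perp$ and $(r_i + r_j)^\perp$, so we obtain the desired vanishing on the orbit of $(r_i + r_j)^\perp$.

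The remaining and hardest task is to show that these are \emph{all} the zeros, and that the vanishing is simple. I would verify this by a divisor-matching argument: multiply $\beta_i - \beta_j$ by a Borcherds product that cancels its double poles on the orbits of $r_i^\perp$ and $r_j^\perp$, obtaining a holomorphic form whose divisor is read off directly from Theorem~\ref{th:borcherds}, and then divide by the Borcherds product with a simple zero on the orbit of $(r_i + r_j)^\perp$. The resulting quotient is holomorphic of weight $0$, hence constant by Koecher's principle, which pins down the divisor of $\beta_i - \beta_j$ exactly. The main obstacle is constructing weight-$0$ Jacobi forms whose Borcherds lifts realize precisely these divisors; the generators $\mathrm{B}(\psi_n)$ of Theorem~\ref{thm:Bn} are the natural starting point, and the same combinatorics that determined the zero-divisor of the weight-$10$ reflective product in the rank-$16$ argument should carry over.
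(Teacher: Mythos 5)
Your first half matches the paper's argument in substance: the paper also proves the vanishing by observing that a reflection swapping $r_i^{\perp}$ and $r_j^{\perp}$ sends $\beta_i - \beta_j$ to its negative, forcing it to vanish on the fixed hyperplane. The paper reflects in $r_i + r_j$ (which sends $r_i \mapsto -r_j$ and so swaps the hyperplanes while fixing $(r_i+r_j)^{\perp}$ pointwise), so the vanishing lands directly on $(r_i+r_j)^{\perp}$; your choice of $\sigma_{r_i - r_j}$ works too but puts the vanishing on $(r_i-r_j)^{\perp}$, and your appeal to Eichler's criterion to identify the $\widetilde{\mathrm{O}}(L)$-orbits of $(r_i\pm r_j)^{\perp}$ (both vectors are primitive, of the same norm, and lie in $L$, and $L \supseteq H^{\oplus 2}$) correctly closes that detour. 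One caution: your justification that $\sigma \in \mathrm{O}^+(L)$ is phrased in the signature-$(2,n)$ sign convention, which is the opposite of the criterion stated in Section 3.1 of the paper ($\sigma_r \in \mathrm{O}^+$ iff $r^2 > 0$, with the finite part embedded positively); the conclusion is right, but you should fix one convention.

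The genuine gap is in the second half, which is where the word ``precisely'' in the statement is earned. Your plan is the paper's plan --- cancel the poles, match the divisor against a Borcherds product, and invoke Koecher's principle on a weight-$0$ quotient --- but the one nontrivial input is never produced. The paper constructs, for each pair $i<j$, a weak Jacobi form of weight $0$ and index $A_1^{\oplus 3}$ with principal part $(\zeta_i^{-1}-2+\zeta_i)(\zeta_j^{-1}-2+\zeta_j)+O(q)$; by Theorem \ref{th:borcherds} its Borcherds lift $\psi_{ij}$ has weight $2$ and divisor exactly $(r_i+r_j)^{\perp}-2r_i^{\perp}-2r_j^{\perp}$, so $(\beta_i-\beta_j)/\psi_{ij}$ is holomorphic of weight $0$ and hence constant, which pins down $\mathrm{div}(\beta_i-\beta_j)$. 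Your suggested starting point, the products $\mathrm{B}(\psi_n)$ of Theorem \ref{thm:Bn}, cannot supply this: their divisors are supported on \emph{all} hyperplanes $r^{\perp}$ with $\langle r,r\rangle = 1/2$, i.e.\ on all three orbits $r_1^{\perp}, r_2^{\perp}, r_3^{\perp}$ at once, not on a chosen pair, and they say nothing about $(r_i+r_j)^{\perp}$. Relatedly, the intermediate assertion that after multiplying $\beta_i-\beta_j$ by a pole-cancelling product one obtains a form ``whose divisor is read off directly from Theorem \ref{th:borcherds}'' is not right: that product is not itself a Borcherds lift, so its zero divisor is exactly the unknown you are trying to determine. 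Without exhibiting the specific weight-$0$ Jacobi forms $b_{ij}$ (and checking, via the formula for $\mathrm{ord}(\Psi; r^{\perp})$ in Theorem \ref{th:borcherds}, that their lifts have no further zeros), the argument only shows that $\beta_i-\beta_j$ vanishes \emph{at least} on the orbit of $(r_i+r_j)^{\perp}$, not precisely there.
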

\begin{proof}
Since the reflection through the hyperplane $(r_i + r_j)^{\perp}$ swaps $r_i^{\perp}$ and $r_j^{\perp}$, it maps $\beta_i - \beta_j$ to $\beta_j - \beta_i$. This implies that $\beta_i - \beta_j$ vanishes on $(r_i + r_j)^{\perp}$. \\
One can show that there are unique weak Jacobi forms of weight 0 and lattice index $A_1^{\oplus 3}$ whose Fourier series begin
\begin{align*} b_{12}(\tau, z_1, z_2, z_3) &= (\zeta_1^{-1} - 2 + \zeta_1) (\zeta_2^{-1} - 2 + \zeta_2) + O(q); \\
b_{13}(\tau, z_1, z_2, z_3) &= (\zeta_1^{-1} - 2 + \zeta_1)(\zeta_3^{-1} - 2 + \zeta_3) + O(q); \\
b_{23}(\tau, z_1, z_2, z_3) &= (\zeta_2^{-1} - 2 + \zeta_2)(\zeta_3^{-1} - 2 + \zeta_3) + O(q). \end{align*}
The Borcherds products $\psi_{ij} = \mathrm{B}(b_{ij})$ have double poles on $r_i^{\perp}$ and $r_j^{\perp}$ and vanish on $(r_i + r_j)^{\perp}$ by construction. This means that $(\beta_i - \beta_j) / \psi_{ij}$ are holomorphic modular forms of weight $0$ and hence (nonzero) constants, so \[ \mathrm{div}(\beta_i - \beta_j) = \mathrm{div}(\psi_{ij}) = (r_i + r_j)^{\perp} - 2 r_i^{\perp} - 2r_j^{\perp}. \qedhere\]
\end{proof}

We consider the specialization of the family $$y^2 = x^3 + (t^3 + a_4 t + a_6) x^2 + \Big(b_6 \prod_{i=1}^3 (t - \beta_i) \Big) x$$
to $\beta_1 = \beta_2 = \beta_3 =: \beta$. In view of Lemma \ref{lem:rank15}, the general member of the specialized family has transcendental lattice $H \oplus H \oplus A_1(-3).$
The orthogonal group of that lattice is essentially the paramodular group of level 3; see e.g. Section 1.3 of \cite{GN98}.

The discriminant of $$y^2 = x^3 + (t^3 + a_4 t + a_6) x^2 + b_6 (t - \beta)^3 x$$ is given by $$b_6^2 (t - \beta)^6 \Big( t^6 + 2 a_4 t^4 + 2(a_6 - 2 b_6) t^3 + (a_4^2 + 12 b_6 \beta) t^2 + (2 a_4 a_6 - 12 b_6 \beta^2) t + (4 b_6 \beta^3 + a_6^2) \Big),$$ and the irreducible factor of degree six of that has discriminant $$D := b_6^3 \cdot (\beta^3 + a_4 \beta + a_6)^3 \cdot P$$ with a modular form $P$ of weight $36$ whose expression is too long to write out here. \\

The presence of $b_6 \cdot (\beta^3 + a_4 \beta + a_6)$ in the discriminant $D$ implies that it is a reflective Borcherds product of weight $12$ with a double zero on the paramodular Humbert surface of discriminant $4$ in the sense of \cite{GN98}, and this in fact determines it uniquely: we obtain $$b_6 \cdot (\beta^3 + a_4 \beta + a_6) = b_6 \beta^3 + a_4 \beta + \Big( \frac{6}{17} b_6^2 + 2 E_6 b_6 \Big)$$ and therefore $$a_6 = 2 E_6 + \frac{6}{17} b_6 = 2 E_6 + \frac{10368}{17} \chi_6.$$

Altogether, we have derived the van Geemen-Sarti equation
\begin{equation} \label{eqn:rank15sol}
y^2 = x^3 + \Big( t^3 - 3 E_4 t + 2 E_6 + \frac{10368}{17} \chi_6 \Big) x^2 + 1728 \Big( -\chi_6 t^3 + \chi_8 t^2 - \chi_{10} t + \chi_{12} - 2592 \chi_6^2 \Big)x.
\end{equation}

\subsection{Rank 14}

The transcendental lattice of the generic K3 surface in the family is $$L = L_4 := H \oplus H \oplus A_1(-1)^{\oplus 4}.$$ By Theorem \ref{thm:Bn}, the algebra structure of modular forms for the full orthogonal group is $$M_*(\mathrm{O}^+(L)) = \mathbb{C}[E_4, E_6, \chi_4, \chi_6, \chi_8, \chi_{10}, \chi_{12}],$$ where $E_4, E_6$ are Eisenstein series and where $\chi_{4+2k} = \mathrm{G}(\varphi_{4+2k})$ for the Jacobi form $$\varphi_{4+2k}(\tau, z_1,...,z_4) = \Delta(\tau) \cdot \Big( \prod_{i=1}^4 \phi_{-2, 1}(\tau, z_i) \Big) \cdot \sigma_k \Big( \frac{3}{\pi^2} \wp(\tau, z_1),..., \frac{3}{\pi^2} \wp(\tau, z_4) \Big), \quad k=0,1,2,3,4.$$ Here, $\sigma_k$ is the usual elementary symmetric polynomial. In contrast to the earlier sections, $\varphi_{4+2k}$ and therefore $\chi_{4+2k}$ are not cusp forms. \\

Similarly to the previous section, there are four natural embeddings of $H \oplus H \oplus A_1(-1)^{\oplus 3}$ into $L$ as the hyperplanes $r_i^{\perp},$ $i=1,...,4$ where $$r_i = (0,..., 1,...,0) \in A_1(-1)^{\oplus 4}$$ has $1$ in the $i^{\text{th}}$ entry and $0$ elsewhere, and along any of the embeddings the generating modular forms have the following restrictions: 

$$E_4^{L_4} \mapsto E_4^{L_4}, \quad E_6^{L_3} \mapsto E_6^{L_3} + \frac{1512}{17} \chi_6^{L_3};$$

$$\chi_4^{L_4} \mapsto 0, \; \chi_6^{L_4} \mapsto 12 \chi_6^{L_3}, \; \chi_8^{L_4} \mapsto 12 \chi_8^{L_3}, \; \chi_{10}^{L_4} \mapsto 12 \chi_{10}^{L_3}, \; \chi_{12}^{L_4} \mapsto 12 \chi_{12}^{L_3}.$$

The general K3 surface in the family --according to Proposition~\ref{prop:weierstrass}-- is described by the equation $$y^2 = x^3 + A(t) x^2 + B(t) x,$$ where $A(t) = t^3 + a_4 t + a_6$ and $B(t) = b_4 t^4 + b_6 t^3 + b_8 t^2 + b_{10} t + b_{12}$.

Using the ring structure and the values from Equation (\ref{eqn:rank15sol}), we see immediately that $$a_6 = 2 E_6 + 36 \chi_6 \quad \text{and} \quad b_6 = -144 \chi_6.$$

Arguing as in the previous sections, we factor $$B(t) = b_4 (t - \beta_1) (t - \beta_2) (t - \beta_3) (t - \beta_4),$$
where $\beta_i$ are meromorphic modular forms on the discriminant kernel of $L$ with double poles on the hyperplane $r_i^{\perp}$. There are unique weak Jacobi forms of weight two with Fourier series beginning $$f_i = (\zeta_i^{-1} - 2 + \zeta_i) + O(q), \quad i=1,...,4,$$ and whose Gritsenko lifts have Fourier-Jacobi expansions beginning $$\mathrm{G}(f_i) = -\frac{1}{4\pi^2} \wp(\tau, z_i) + f_i(\tau, z_1,...,z_4) s + O(s^2);$$ we must have $\beta_i = C \cdot \mathrm{G}(f_i)$ for some constant $C$.
Also, there is a constant $A$ such that $b_4 = A \cdot \chi_4$.

Now, we consider the Fourier-Jacobi expansion of $b_6$:

\begin{align*} -144 \chi_6 &= -b_4 (\beta_1 + \beta_2 + \beta_3 + \beta_4) \\ &= \frac{1}{4\pi^2} AC \chi_4 \cdot \Big( \sum_{i=1}^4 \wp(\tau, z_i) + O(s) \Big) \\ &= -\frac{AC}{12} \Delta \Big( \prod_{i=1}^4 \phi_{-2, 1}(\tau, z_i) \Big) \sum_{i=1}^4 \frac{\phi_{0, 1}(\tau, z_i)}{\phi_{-2, 1}(\tau, z_i)} + \mathrm{O}(s). \end{align*} By comparing constant terms we obtain $AC = 12^3$. From the Fourier-Jacobi expansions of the other terms, we obtain $$b_8 = b_4 \prod_{i<j} \beta_i \beta_j = \frac{AC^2}{12^2} (\chi_8 - 432 \chi_4^2) = 12C \cdot (\chi_8 - 432 \chi_4^2)$$ and $$b_{10} = -b_4 \prod_{i < j < k} \beta_i \beta_j \beta_k = -\frac{AC^3}{12^3} (\chi_{10} - 432 \chi_4 \chi_6)$$ and finally $$b_{12} = b_2 \beta_1 \beta_2 \beta_3 \beta_4 = \frac{AC^4}{12^4} \Big( \chi_{12} + 360 \chi_4 \chi_8 - 216 \chi_6^2 - 235008 \chi_4^3 - 648 E_4 \chi_4^2 \Big).$$
By comparing this with the expressions in Equation (\ref{eqn:rank15sol}), we obtain $A = 12^2$ and $C = 12$ and therefore $$b_8 = 144 (\chi_8 - 432 \chi_4^2),$$ $$b_{10} = -144(\chi_{10} - 432 \chi_4 \chi_6)$$ and $$b_{12} = 144 (\chi_{12} + 360 \chi_4 \chi_8 - 216 \chi_6^2 - 235008 \chi_4^3 - 648 E_4 \chi_4^2).$$

The only coefficient left undetermined is $a_4$: we have $$a_4 = -3 E_4 + C \cdot \chi_4$$ for some constant $C$. The remaining constant can be determined by paramodular restriction as in the previous subsection. Setting $\beta_1 = \beta_2 = \beta_3 = \beta_4 =: \beta$ specializes the transcendental lattice to $H \oplus H \oplus A_1(-4)$, and the specialized coefficients become paramodular forms on the group $K(4)$. The irreducible degree six factor of the discriminant of the van Geemen--Sarti equation $$y^2 = x^3 + \big(t^3 + a_4 t + a_6\big) x^2 + \big(t - \beta\big)^4 x$$ itself has discriminant $$D = b_4^3 (a_6 + a_4 \beta + \beta^3)^4 \cdot P$$ with a factor $P$ of weight $24$. The expression $a_6 + a_4 \beta + \beta^3$ is therefore a reflective paramodular form of weight $6$, and in fact $$a_6 + a_4 \beta + \beta^3 = \frac{\mathrm{B}(\psi_{9/2})^2}{\mathrm{B}(\psi_{1/2})^6},$$ where $\mathrm{B}(\psi_{1/2})$ and $\mathrm{B}(\psi_{9/2})$ are (the unique) reflective Borcherds products with simple zeros on the Heegner divisors of discriminant $1/16$ and $1/4$, respectively. Using the formulas for $a_6$ and $\beta$ that were determined earlier, we find $$a_4 = -3E_4 - 144 \chi_4.$$ Altogether, we have derived the van Geemen-Sarti equation

\begin{align*} y^2 &= x^3 + \Big( t^3 - 3 E_4 t - 144 \chi_4 t + 2 E_6 + 432 \chi_6 \Big) x^2 \\ &+ 144 \Big( \chi_4 t^4 - \chi_6 t^3 + (\chi_8 - 432 \chi_4^2) t^2 - (\chi_{10} - 432 \chi_4 \chi_6) t \\ &\quad \quad \quad + \chi_{12} + 360 \chi_4 \chi_8 - 216 \chi_6^2 - 235008 \chi_4^3 - 648 E_4 \chi_4^2 \Big) x. \end{align*}

\subsection{Rank 13}

The generic transcendental lattice is $$L = L_5 := H \oplus H \oplus A_1(-1)^{\oplus 5}.$$ The general K3 surface in the family --according to Proposition~\ref{prop:weierstrass_13}-- is described by the equation \begin{equation} \label{eqn:rank13}y^2 = x^3 + A(t) x^2 + B(t)x \end{equation} where $A(t) = t^3 + a_4 t + a_6$ and $$B(t) = b_2 t^5 + b_4 t^4 + b_6 t^3 + b_8 t^2 + b_{10} t + b_{12}.$$

Up to the action of the discriminant kernel of $L$, there are six classes of Heegner divisors of discriminant $1/4$:  they are $r_i^{\perp}, \quad i=1,...,5$ and $r^{\perp}$, where $r_i = (0, ..., 1/2, ..., 0) \in A_1^{\oplus 5}$ is nonzero in the $i$th component, and where $r$ can be taken to be the vector \begin{equation} \label{eqn:rank13_r} r = (0, 0) \oplus (1, 1) \oplus (1/2, 1/2, 1/2, 1/2, 1/2) \in H \oplus H \oplus A_1(-1)^{\oplus 5}.\end{equation}
Along the hyperplane $r^{\perp}$, the polarizing lattice extends to $H \oplus D_8(-1) \oplus D_4(-1)$ and the description (\ref{eqn:rank13}) is no longer valid. In concrete terms, this means that $a_k, b_k$ define meromorphic modular forms of level $\mathrm{O}^+(L)$ that are allowed to have poles along the rational quadratic divisor $r^{\perp}$. \\

By Theorem \ref{thm:B5}, the ring $$M^!_*(\mathrm{O}^+(L)) = \Big\{ \text{meromorphic modular forms, holomorphic away from} \; r^{\perp} \Big\}$$ is a free algebra with generators $F_4, E_6$ and $\chi_{2 + 2k}$, $k=0,...,5$, where $E_6$ is an Eisenstein series, $F_4$ is the (unique) holomorphic modular form of weight $4$ with constant Fourier coefficient $1$, and $\chi_{2 + 2k}$ is the (meromorphic) Gritsenko lift of $$\phi_{2+2k}(\tau, z_1,...,z_5) = \Delta(\tau) \prod_{i=1}^5 \phi_{-2, 1}(\tau, z_i) \cdot \sigma_k \Big( \frac{3}{\pi^2} \wp(\tau, z_1),..., \frac{3}{\pi^2} \wp(\tau, z_5) \Big).$$

Furthermore, it follows from \cite{WW23} that the lowest-weight form $\chi_2$ is also a Borcherds product, with divisor $$\mathrm{div}(\chi_2) = 2 \sum_{i=1}^5 r_i^{\perp} - 2r^{\perp}.$$

Under the restriction map to any of the hyperplanes $r_i^{\perp}$ (i.e. to a sublattice of type $H \oplus H \oplus A_1(-1)^{\oplus 4})$, the generating forms are mapped as follows: $$F_4 \mapsto E_4 + 48 \chi_4^{L_4}, \quad E_6^{L_5} \mapsto E_6^{L_4}, \quad \chi_2^{L_5} \mapsto 0,$$ $$\chi_4^{L_5} \mapsto 12 \chi_4^{L_4}, \; \chi_6^{L_5} \mapsto 12 \chi_6^{L_4}, \; \chi_8^{L_5} \mapsto 12 \chi_8^{L_4}, \; \chi_{10}^{L_5} \mapsto 12 \chi_{10}^{L_4}, \; \chi_{12}^{L_5} \mapsto 12 \chi_{12}^{L_4}.$$

There are again Jacobi forms $f_1,...,f_5$ whose $q$-expansions begin $$f_j(\tau, z_1,...,z_5) = \zeta_j^{-1} - 2 + \zeta_j + O(q).$$ Unlike the previous subsections, $f_j$ are no longer uniquely determined; however, there are unique choices of $f_j$ with the property that all of their singular coefficients are represented in the $q^0$-term of its Fourier expansion. In other words, $f_j$ is unique if we ask for $\mathrm{G}(f_j)$ to be holomorphic along the divisor $r^{\perp}$. \\

On the other hand, the polynomial $B(t)$ factors as $$B(t) = (t - \beta_1) \cdot ... \cdot (t - \beta_5),$$ where each $\beta_i$ has weight two under the discriminant kernel and (after a reordering) has double poles exactly on the orbit of $r_i^{\perp}$ and possibly on $r^{\perp}$. Since there are no nonzero, holomorphic modular forms of weight two, we obtain $$\beta_i = B \cdot \chi_2 + C \cdot \mathrm{G}(f_i)$$ for common constants $B, C$. Setting $\chi_2 = 0$, the result of the previous subsection implies $C = 12$ but does not determine $B$. Besides this, the ring structure implies that there are constants $A, C, D, E$ such that \begin{equation} \label{eqn:rank13a4a6} a_4 = -3 F_4 + A \cdot \chi_2^2 \; \text{and} \; a_6 = 2 E_6 + 3 \chi_6 + C \cdot F_4 \chi_2 + D \cdot \chi_2^3 + E \cdot \chi_2 \chi_4. \end{equation}

We will use paramodular restriction as in the previous sections, specializing to $\beta_1 = \beta_2 = \beta_3 = \beta_4 = \beta_5 =: \beta$, to determine the undetermined constants. Under this specialization, the generic transcendental lattice becomes $H \oplus H \oplus A_1(-5)$ and the coefficients become (meromorphic) paramodular forms of level $5$.
These are considerably more complicated than paramodular forms of level $\le 4$; their structure was nevertheless worked out completely in \cite{Wil20}.
The discriminant of $$x^3 + (t^3 + a_4 t + a_6) x^2 + b_2 (t - \beta)^5 x$$ factors as $b_2 (t - \beta)^{10} \cdot P(t)$ with an irreducible polynomial $P$ of degree $6$, whose discriminant itself factors as $$\mathrm{disc}(P) = b_2^3 (\beta^3 + a_4 \beta + a_6)^5 \cdot Q$$ with a factor $Q$ of weight $24$.
In particular, $f := \beta^3 + a_4 \beta + a_6$ is a reflective paramodular form; more precisely, it is a reflective Borcherds product of weight $6$, with a pole of order six (coming from $\beta^3$) on the paramodular Humbert surface $H(1/20)$ that is nonvanishing on the paramodular Humbert surface $H(1)$, and this information determines it uniquely: we have $$\mathrm{div}(f) = -8 H(1/20) + 2 H(1/5) + 4 H(1/4),$$ where $H(D)$ is the (imprimitive) paramodular Heegner divisor of discriminant $D$. In the notation of Table 1 of \cite{Wil20}, we have $$f = \text{const} \cdot (b_8 / b_5)^2.$$
Also, the paramodular restriction $$\chi := \mathrm{res}(\chi_2)$$ of the product $\chi_2$ is the reflective form with divisor $$\mathrm{div} \, \chi = 10 H(1/20) - 2 H(1/4),$$ in the notation of \cite{Wil20}, $$\chi = \text{const} \cdot b_5^2 / b_8.$$

The paramodular restrictions of the forms $\chi_4$ and $\chi_6$ can be computed to be $$\mathrm{res}\, \chi_4 = 30 \chi \cdot \Big( \chi + 2 G \Big)$$ and $$\mathrm{res}\, \chi_6 = 240 \chi \cdot \Big( 5 \chi^2 + 15 \chi G + 6 G^2 \Big),$$
respectively. Inserting Equation (\ref{eqn:rank13a4a6}), we obtain \begin{align*} f &= \beta^3 + a_4 \beta + a_6 \\ &= (B^3 + AB + D + 30E + 3600) \cdot \chi^3 + (36B^2 + 12A + 60E + 10800) G \chi^2 \\ &+ (432B + 4320)G^2 \chi + (C - 3B) F_4 \chi + 1728 G^3 - 36 F_4G + 2E_6. \end{align*}

The paramodular forms $\chi, E_6, F_4, G$ are algebraically independent, and comparing a few Fourier coefficients shows that the unique expression for $f$ in terms of them is $$f = 10 \chi (432 G^2 - 3 F) + 1728 G^3 - 36 F G + 2 E_6.$$
This allows us to read off the missing coefficients $$A = 0, \; B = 0, \; C = -30, \; D = 1800, \; E = -180,$$ and therefore we obtain \begin{align} \label{eqn:rank13_A} a_4 &= -3 F_4, \\ a_6 &= 2 E_6 + 3 \chi_6 - 30 \chi_2 F_4 + 1800 \chi_2^3 - 180 \chi_2 \chi_4,  \end{align} and $$\beta_i = 12 \cdot \mathrm{G}(f_i).$$

The coefficients of $B(t)$ can then be computed as the symmetric polynomials in $\beta_i$. We find \begin{align*} \label{eqn:rank13_B} b_2 &= -12 \chi_2;  \\ b_4 &= -b_2 \sum_{i=1}^5 \beta_i = 12 \chi_4 - 360 \chi_2^2, \\ b_6 &= b_2 \sum_{i < j} \beta_i \beta_j = -12 \chi_6 + 720 \chi_2 \chi_4 - 7200 \chi_2^3, \\ b_8 &= 12 \chi_8 - 432 \chi_4^2 - 14400 \chi_2^2 \chi_4 + 216000 \chi_2^4 - 4320 \chi_2^2 F_4; \\ b_{10} &= -12 \chi_{10} - 936 \chi_2 \chi_8 + 432 \chi_4 \chi_6 + 6480 \chi_2^2 E_6  + 32760 \chi_2^2 \chi_6 + 2376 \chi_2 \chi_4 F_4 \\ &\quad + 27936 \chi_2 \chi_4^2 + 362880 \chi_2^3 F_4 + 21600 \chi_2^3 \chi_4 - 11707200 \chi_2^5; \end{align*}
\begin{align*} b_{12} &= 12 \chi_{12} - 216\chi_2 \chi_{10} + 360\chi_4\chi_8 - 216 \chi_6^2 - 1296 \chi_2 \chi_4 E_6 + 864\chi_2 \chi_6 F_4 - 648 \chi_4^2 F_4 \\ &\quad - 67968 \chi_2^2 \chi_8 + 28584 \chi_2 \chi_4 \chi_6 - 16992 \chi_4^3 + 83520\chi_2^3 E_6 - 5832 \chi_2^2 F_4^2  - 258336 \chi_2^2 \chi_4 F_4 \\ & \quad - 574560 \chi_2^3 \chi_6 + 871776 \chi_2^2 \chi_4^2 + 25574400 \chi_2^4 F_4 + 132096960 \chi_2^4 \chi_4 - 1523059200 \chi_2^6. \end{align*}

\begin{rem} A priori, the coefficients $a_k$ and $b_k$ were allowed to have poles along the exceptional hyperplane $r^{\perp}$ (of order at most their weight). However, the computation of the coefficients shows that when we write $$y^2 = x^3 + (t^3 + a_4 t + a_6) x^2 + (b_2 t^5 + ... + b_{12}) x,$$ $a_4$ and $a_6$ are holomorphic along $r^{\perp}$ and all the $b_k$ have exactly a double pole there.
\end{rem}

\section{The remaining polarization}

We have now computed the coefficients of van Geemen--Sarti equations for all of the lattice polarizations of Equation (\ref{eqn:lattices}) except for $S = H \oplus D_8(-1) \oplus D_4(-1)$. The orthogonal complement in the K3 lattice is $L = H \oplus H(2) \oplus D_4(-1)$, and it embeds (up to an isometry) into the lattice $L_5 = H \oplus H \oplus A_1(-1)^{\oplus 5}$ as the exceptional hyperplane $r^{\perp}$ defined in (\ref{eqn:rank13_r}).

\subsection{Modular forms and a Weierstrass equation for \texorpdfstring{$S = H \oplus D_8(-1) \oplus D_4(-1)$}{S}}

The graded algebra $M_*(\mathrm{O}^+(L))$ of modular forms was described in \cite{WW23b}. It is a free algebra that is generated in weights $2, 6, 8, 10, 12, 16, 20$. The generators of weight $2$ and $6$ can be chosen to be the (unique) modular forms $E_2$ and $F_6$ whose leading Fourier-Jacobi coefficients are $$1 + 24q + 24q^2 + 96q^3 + ... \in M_2(\Gamma_0(2))$$ and $$q + 32q^2 + 244q^3 + 1024q^4 + ... \in M_6(\Gamma_0(2)),$$ and the generator $F_{10}$ of weight $10$ will be chosen to be the (unique) cusp form of that weight, normalized to have coprime Fourier coefficients.
The other generators can be described as follows: there are modular forms $g_1,...,g_5$ of weight four for the discriminant kernel $\mathrm{\widetilde{O}}(L)$ that are permuted under the action of $\mathrm{O}^+(L)$ and which satisfy $$E_2^2 = \sum_{i=1}^5 g_i.$$ The generators of weights $8, 12, 16, 20$ can be taken as the power sums $$p_{4k} = \sum_{i=1}^5 g_i^k, \quad k \in \{2,3,4,5\}.$$

Following Proposition \ref{prop:weierstrass}, the van Geemen--Sarti equation for the general $S$-polarized K3 surface can be put in the form $$y^2 = x^3 + 2 C(t) x^2 + D(t) x,$$ where $$C(t) = c_2 t^2 + c_6 t + c_{10}$$ and $$D(t) = t^5 + d_8 t^3 + d_{12} t^2 + d_{16} t + d_{20},$$ and where the $c_k$ and $d_k$ are modular forms for $\mathrm{O}^+(L)$ of weight $k$.
If we factor $$D(t) = \prod_{i=1}^5 (t - \gamma_i),$$ then each $\gamma_i$ is modular of weight $4$ on the discriminant kernel. Since they sum to zero, the ring structure implies that $\gamma_i$ is a constant multiple of $g_i - \frac{1}{5} E_2^2$: without loss of generality (applying a substitution if necessary), we may assume the roots are $$\gamma_i = g_i - \frac{1}{5} E_2^2, \quad i=1,2,3,4,5.$$

\subsection{A limit process}

Consider the equation $$y^2 = x^3 + (t^3 + a_4 t + a_6) x^2 + b_2 x \cdot \prod_{t=1}^5 (t - \beta_i)$$ with $a_4, a_6, b_2$ and $\beta_i$ defined as in Section 5.6.
The forms $a_4, a_6$ and $\beta_i$ are holomorphic along $r^{\perp}$ and their restrictions can be determined using the ``pullback trick", namely $$\mathrm{res}(\beta_i) = E_2, \quad i=1,2,3,4,5$$ and $$\mathrm{res}(a_4) = -3E_2^2, \quad \mathrm{res}(a_6) = 2 E_2^3.$$

Define the following (holomorphic) modular forms of weight 4 and level $\mathrm{\widetilde{O}}(L_5)$: $$f_j := b_2 \cdot \Big(-\beta_j + \frac{1}{5} \sum_{i=1}^5 \beta_i \Big) = \sum_{i \ne j} \frac{b_2 (\beta_i - \beta_j)}{5}, \quad j=1,...,5.$$
These are Gritsenko lifts; they can be written as the additive theta lifts of the unique vector-valued modular forms $\varphi_j$ of weight $3/2$ on the Weil representation attached to $L_5$ with constant Fourier coefficient $$\frac{576}{5} \mathfrak{e}_{\gamma_i} - \frac{144}{5} \sum_{j \ne i} \mathfrak{e}_{\gamma_j},$$ where $\gamma_1,...,\gamma_5$ are the nonzero cosets of $(L_5' / L_5)$ of norm zero, and $\mathfrak{e}_{\gamma}$ are the associated basis elements of the group ring $\mathbb{C}[(L_5' / L_5)]$. (For the definition of the Weil representation we refer to \cite{B98}.) This allows the restrictions of $f_j$ to $r^{\perp}$ (which are nonzero) to be computed using the pullback method as well: $$\mathrm{res}(f_j) = -3 \cdot g_j.$$

In particular, the forms $h_j := \frac{C}{3} (a_4 / 5 - f_j)$ restrict to the zeros $\gamma_j$ of $D$.

This motivates the following substitution: $$t = \frac{1}{b_2} \cdot T + \frac{1}{5} \Big( -\frac{a_4}{b_2} + \sum_{j=1}^5 \beta_j \Big) = \frac{1}{ b_2} \cdot T - \frac{a_4 + b_4}{5 b_2},$$ such that $$t - \beta_j = \frac{1}{b_2} \cdot \Big( T - ( \tfrac{a_4}{5} - f_j) \Big).$$

Together with the substitutions $x = X / b_2^2$ and $y = Y / b_2^3$, this transforms the equation $$y^2 = x^3 + A(t) x^2 + B(t) x$$ into $$Y^2 = X^3 + b_2^2 A \left( \frac{1}{b_2} \cdot T - \frac{a_4 + b_4}{5 b_2} \right) X^2 + \prod_{j=1}^5 (T - h_j)  \cdot X.$$

Here, 
\begin{equation} 
\begin{split}
\label{eqn:transformed_rank_13} 
b_2^2 A \left( \frac{1}{b_2} \cdot T - \frac{a_4 + b_4}{5b_2} \right) & = \frac{1}{b_2} T^3 - \frac{3}{5} \frac{a_4 + b_4}{b_2} T^2 + \left( b_2 a_4 + \frac{3}{25} \frac{a_4^2}{b_2} + \frac{6}{25} \frac{a_4 b_4}{b_2} + \frac{3}{25} \frac{b_4^2}{b_2} \right) T 
\\ &- \frac{1}{5} b_2 a_4^2 - \frac{1}{5} b_2 a_4 b_4 + b_2^2 a_6 - \frac{1}{125} \frac{a_4^3}{b_2} - \frac{3}{125} \frac{a_4^2 b_4}{b_2} - \frac{3}{125} \frac{a_4 b_4^2}{b_2} - \frac{b_4^3}{125 b_2}. 
\end{split}
\end{equation}

The asymptotic behavior of the coefficients $b_2, b_4$ in the limit is governed by Theorem \ref{th:gritsenko} and Equation (\ref{eqn:ppart}): letting $r^{\perp}$ be the exceptional hyperplane  we obtain, asymptotically as $\langle r, Z \rangle \rightarrow 0,$ $$b_2(Z) \sim -12 \cdot -\frac{1}{4\pi^2 \langle r, Z \rangle^2} + E_2 + O(\langle r, Z \rangle)$$ and $$b_4(Z) = -b_2(Z) \sum_{i=1}^5 \beta_i(Z) \sim 12 \cdot -\frac{5 E_2}{4\pi^2 \langle r, Z \rangle^2}.$$
This shows that $$3 b_2 a_4 + \frac{9}{25} \frac{b_4^2}{b_2} \sim -\frac{9}{\pi^2 \langle r, Z \rangle^2} E_2^2 + \frac{9}{\pi^2 \langle r, Z \rangle^2} E_2^2,$$ 
and similarly that $$-\frac{1}{5}b_2 a_4^2 - \frac{1}{5} b_2 a_4 b_4 + b_2^2 a_6 - \frac{3}{125} \frac{a_4 b_4^2}{b_2} - \frac{b_4^3}{125 b_2}$$ remains holomorphic in the limit $1/b_2 \rightarrow 0$.

\subsection{Determining the coefficients}

The coefficients $a_k, b_k$ of the van Geemen--Sarti equation $$y^2 = (t^3 + a_4 t + a_6) x^2 + (b_2 t^5 + ... + b_{12}) x$$ have ``level two" Fourier-Jacobi expansions in addition to level one Fourier-Jacobi expansions, due to the isometry $$L_5 = H \oplus H \oplus A_1(-1)^{\oplus 5} = H \oplus H(2) \oplus D_4(-1) \oplus A_1(-1).$$
More precisely, we can represent modular forms on $\mathrm{O}^+(L_5)$ as series $$F(Z) = \sum_{n=0}^{\infty} \phi_n(\tau, z) e^{2\pi i n w}$$ where each $\phi_n$ is a meromorphic Jacobi form of lattice index $D_4 \oplus A_1$ with respect to the congruence subgroup $\Gamma_0(2) \le \mathrm{SL}_2(\mathbb{Z})$. The two Fourier-Jacobi expansions can be quite different, and it is not generally clear how to compute the level two expansions of a modular form that is given as a level one Fourier-Jacobi series, but the expansions of a Gritsenko lift or Borcherds product at either cusp can be computed using the formulas of \cite{B98}.

In the level two expansions $$\chi_k = \sum_{n=0}^{\infty} \chi_{k, n}(\tau, \mathfrak{z}, z) e^{2\pi i n w}, \quad \mathfrak{z} \in D_4 \otimes \mathbb{C}, \; z \in A_1 \otimes \mathbb{C},$$ the terms $\chi_{k, 0}$ are independent of $\mathfrak{z}$ and define elliptic functions (in the variable $z$) on the lattice $\mathbb{Z} \oplus 2\tau \mathbb{Z}$ which are holomorphic away from the lattice points. For example, we have \begin{align*} \chi_{2, 0} &= \frac{1}{(2\pi i)^2} \wp(2\tau, z) + \frac{1}{12} E_2(\tau) - \frac{1}{6} E_2(2\tau) \\ &= \frac{1}{\zeta^{-1} - 2 + \zeta} - 2q + (\zeta^{-1} - 4 + \zeta) q^2 - 8q^3 + (2 \zeta^{-2} + \zeta^{-1} - 8 + \zeta + 2 \zeta^2) q^4 + O(q^5),\end{align*}
and
\begin{align*} \chi_{4, 0} &= \frac{5}{(2\pi i)^4} \wp''(2\tau, z) + 2\frac{ \eta(2\tau)^{16}}{\eta(\tau)^8} \\ &= \frac{5 \zeta^{-1} + 20 + 5 \zeta}{(\zeta^{-1} - 2 + \zeta)^2} + 2q + (5 \zeta^{-1} + 16 + 5\zeta) q^2 + 56q^3 \\
& \qquad \qquad + (40 \zeta^{-2} + 5 \zeta^{-1} + 128 + 5 \zeta + 40 \zeta^2) q^4 + O(q^5), \end{align*} and the higher terms $\chi_{k, n},$ $n > 0$ are weak Jacobi forms of lattice index.

Setting $z=0$, or equivalently $\zeta = 1$ in the Fourier expansions of $\chi_k$ and using the formulas from before yields $$\mathrm{res} \Big( -\frac{3}{5} \frac{a_4 + b_4}{b_2} \Big) = 3 E_2;$$ $$\mathrm{res} \Big( b_2 a_4 + \frac{3}{25} \frac{a_4^2}{b_2} + \frac{6}{25} \frac{a_4 b_4}{b_2} + \frac{3}{25} \frac{b_4^2}{b_2} \Big) = \frac{18}{5} E_2^3 - \frac{864}{5} F_6;$$ and \begin{align*} &\quad \mathrm{res} \Big( -\frac{1}{5} b_2 a_4^2 - \frac{1}{5} b_2 a_4 b_4 + b_2^2 a_6 - \frac{1}{125} \frac{a_4^3}{b_2} - \frac{3}{125} \frac{a_4^2 b_4}{b_2} - \frac{3}{125} \frac{a_4 b_4^2}{b_2} - \frac{b_4^3}{125 b_2}\Big) \\ &= \frac{81}{100} E_2^5 - \frac{2592}{25} E_2^2 F_6 + \frac{27}{20} E_2 p_8 + \frac{31104}{5} F_{10}. \end{align*} So in the limit $b_2 \rightarrow \infty$, Equation (\ref{eqn:transformed_rank_13}) becomes $$Y^2 = X^3 + 2 C(T) X^2 + D(T) X$$ with $$C(T) = \frac{3}{2} E_2 \cdot T^2 + \Big( \frac{9}{5} E_2^3 - \frac{432}{5} F_6 \Big) T + \frac{81}{200} E_2^5 - \frac{1296}{25} E_2^2 F_6 + \frac{27}{40} E_2 p_8 + \frac{15552}{5} F_{10}.$$
The coefficients of $$D(T) = \prod_{i=1}^5 (T - \gamma_i) = T^5 + d_8 T^3 + d_{12} T^2 + d_{16} T + d_{20}$$ are easily determined in terms of $E_2$ and $p_{4k} = \sum_i g_i^k$ by expanding the product and substituting $\gamma_i = g_i - \frac{1}{5} E_2^2 = g_i - \frac{1}{5} \sum_{j=1}^5 g_j$. We obtain $$d_8 = -\frac{1}{2} \sum_{i=1}^5 \gamma_i^2 = -\frac{1}{2} p_8 + \frac{1}{10} E_2^4;$$ $$d_{12} = -\frac{1}{3} \sum_{i=1}^5 \gamma_i^3 = -\frac{1}{3} p_{12} + \frac{1}{5} E_2^2 p_8 - \frac{2}{75} E_2^6;$$ $$d_{16} = -\frac{1}{4} \sum_{i=1}^5 \gamma_i^4 + \frac{1}{8} \Big( \sum_{i=1}^5 \gamma_i^2 \Big)^2 = -\frac{1}{4} p_{16} + \frac{1}{5} E_2^2 p_{12} - \frac{11}{100} E_2^4 p_8 + \frac{11}{1000} E_2^8 + \frac{1}{8} p_8^2;$$ \begin{align*} d_{20} &= -\frac{1}{5} \sum_{i=1}^5 \gamma_i^5 + \frac{1}{6} \Big( \sum_{i=1}^5 \gamma_i^3 \Big) \Big( \sum_{i=1}^5 \gamma_i^2 \Big) \\ &= -\frac{1}{5} p_{20} + \frac{1}{5} E_2^2 p_{16} - \frac{17}{150} E_2^4 p_{12} + \frac{37}{750} E_2^6 p_8 - \frac{37}{9375} E_2^{10} + \frac{1}{6} p_{12} p_8 - 
\frac{1}{10} E_2^2 p_8^2 \end{align*}

\appendix
\section{Appendix}
\label{appendix}

In this appendix, we will describe the coefficients of the van Geemen--Sarti equations determined in Sections 5 and 6.
\smallskip

In the first five tables, $E_4$ and $E_6$ denote the normalized (constant term 1) Eisenstein series of weights $4$ and $6$, and $\chi_n,...,\chi_{12}$ are the Gritsenko lifts

$$\chi_{12 - 2n + 2k} = \mathrm{G}(\phi_{12 - 2n + 2k}),$$ where $\phi_{12 - 2n + 2k}$ is the Jacobi form $$\phi_{12 - 2n + 2k}(\tau, z_1,...,z_n) = \Delta(\tau) \Big( \prod_{j=1}^n \phi_{-2, 1}(\tau, z_j) \Big) \sum_{i_1 < ... < i_k} \frac{\phi_{0, 1}(\tau, z_{i_1}) \cdot ... \cdot \phi_{0, 1}(\tau, z_{i_k})}{\phi_{-2, 1}(\tau, z_{i_1}) \cdot ... \cdot \phi_{-2, 1}(\tau, z_{i_k})}$$ of weight $12-2n+2k$.

\begin{table}[htbp]
\centering
\caption{Rank 18. \\ $y^2 = x^3 + (t^3 + a_4 t + a_6) x^2 + b_{12} x$}
\begin{tabular}{c|c}
\hline
$a_4$ & $-3 E_4$ \\
\hline
$a_6$ & $2 E_6$ \\
\hline
$b_{12}$ & $12^6 \chi_{12}$\\
\hline
\end{tabular}
\end{table}

\begin{table}[htbp]
\centering
\caption{Rank 17. \\ $y^2 = x^3 + (t^3 + a_4 t + a_6) x^2 + (b_{10}t + b_{12}) x$}
\begin{tabular}{c|c}
\hline
$a_4$ & $-3 E_4$ \\
\hline
$a_6$ & $2 E_6$ \\
\hline
$b_{10}$ & $-12^5 \chi_{10}$ \\
\hline
$b_{12}$ & $12^5 \chi_{12}$\\
\hline
\end{tabular}
\end{table}

\begin{table}[htbp]
\centering
\caption{Rank 16. \\ $y^2 = x^3 + (t^3 + a_4 t + a_6) x^2 + (b_8 t^2 + b_{10}t + b_{12}) x$}
\begin{tabular}{c|c}
\hline
$a_4$ & $-3 E_4$ \\
\hline
$a_6$ & $2 E_6$ \\
\hline
$b_8$ & $12^4 \chi_8$ \\
\hline
$b_{10}$ & $-12^4 \chi_{10}$ \\
\hline
$b_{12}$ & $12^4 \chi_{12}$\\
\hline
\end{tabular}
\end{table}

\begin{table}[htbp]
\centering
\caption{Rank 15. \\ $y^2 = x^3 + (t^3 + a_4 t + a_6) x^2 + (b_6 t^3 + b_8 t^2 + b_{10}t + b_{12}) x$}
\begin{tabular}{c|c}
\hline
$a_4$ & $-3 E_4$ \\
\hline
$a_6$ & $2 E_6 + \frac{2^7 \cdot 3^4}{17} \chi_6$ \\
\hline
$b_6$ & $-12^3 \chi_6$ \\
\hline
$b_8$ & $12^3 \chi_8$ \\
\hline
$b_{10}$ & $-12^3 \chi_{10}$ \\
\hline
$b_{12}$ & $12^3 \chi_{12} - 2^{11} \cdot 3^7 \chi_6^2$\\
\hline
\end{tabular}
\end{table}

\begin{table}[htbp]
\centering
\caption{Rank 14. \\ $y^2 = x^3 + (t^3 + a_4 t + a_6) x^2 + (b_4 t^4 + b_6 t^3 + b_8 t^2 + b_{10}t + b_{12}) x$}
\begin{tabular}{c|c}
\hline
$a_4$ & $-3 E_4 - 144 \chi_4$ \\
\hline
$a_6$ & $2 E_6 + 432 \chi_6$ \\
\hline
$b_4$ & $144 \chi_4$ \\
\hline
$b_6$ & $-144 \chi_6$ \\
\hline
$b_8$ & $144 \chi_8 - 2^8 \cdot 3^5 \chi_4^2$ \\
\hline
$b_{10}$ & $-144 \chi_{10} + 2^8 \cdot 3^5 \chi_4 \chi_6$ \\
\hline
$b_{12}$ & $144 \chi_{12} + 2^7 3^4 5^1 \cdot \chi_4 \chi_8 - 2^7 3^5 \chi_6^2 - 2^{13} 3^5 \cdot 17 \chi_4^3 - 2^7 3^6 E_4 \chi_4^2$\\
\hline
\end{tabular}
\end{table}

The notation is almost the same in the following table, but the Eisenstein series $E_4$ fails to be holomorphic. Instead $F_4$ denotes the (unique) modular form in $M_4(\mathrm{O}^+(L))$, $L = H \oplus H \oplus A_1(-1)^{\oplus 5}$, normalized to have constant coefficient $1$.

\begin{table}[htbp]
\centering
\caption{Rank 13. \\ $y^2 = x^3 + (t^3 + a_4 t + a_6) x^2 + (b_2 t^5 + b_4 t^4 + b_6 t^3 + b_8 t^2 + b_{10}t + b_{12}) x$}
\begin{tabular}{c|c}
\hline
$a_4$ & $-3 F_4$ \\
\hline
$a_6$ & $2 E_6 + 3\chi_6 - 30 \chi_2 F_4 - 180 \chi_2 \chi_4 + 1800 \chi_2^3$ \\
\hline
$b_2$ & $-12 \chi_2$ \\
\hline
$b_4$ & $12 \chi_4 - 360 \chi_2^2$ \\
\hline
$b_6$ & $-12 \chi_6 + 720 \chi_2 \chi_4 - 7200 \chi_2^3$ \\
\hline
$b_8$ & $12 \chi_8 - 432 \chi_4^2 - 14400 \chi_2^2 \chi_4 - 4320 \chi_2^2 F_4 + 216000 \chi_2^4$ \\
\hline
$b_{10}$ &  $-12 \chi_{10} - 936 \chi_2 \chi_8 + 432 \chi_4 \chi_6 + 6480 \chi_2^2 E_6  + 32760 \chi_2^2 \chi_6 + 2376 \chi_2 \chi_4 F_4$ \\ & $+ 27936 \chi_2 \chi_4^2 + 362880 \chi_2^3 F_4 + 21600 \chi_2^3 \chi_4 - 11707200 \chi_2^5 $\\
\hline
$b_{12}$ & $12 \chi_{12} - 216\chi_2 \chi_{10} + 360\chi_4\chi_8 - 216 \chi_6^2 - 1296 \chi_2 \chi_4 E_6 + 864\chi_2 \chi_6 F_4 - 648 \chi_4^2 F_4$ \\ &  $- 67968 \chi_2^2 \chi_8 + 28584 \chi_2 \chi_4 \chi_6 - 16992 \chi_4^3 + 83520\chi_2^3 E_6 - 5832 \chi_2^2 F_4^2  - 258336 \chi_2^2 \chi_4 F_4$ \\ & $- 574560 \chi_2^3 \chi_6 + 871776 \chi_2^2 \chi_4^2 + 25574400 \chi_2^4 F_4 + 132096960 \chi_2^4 \chi_4 - 1523059200 \chi_2^6$  \\
\hline
\end{tabular}
\end{table}

In the following table, we have $L = H \oplus H(2) \oplus D_4(-1)$; $E_2$ is the unique modular form of weight $2$ whose leading Fourier--Jacobi coefficient at the level two cusp is $$1 + 24q + 24q^2 + 96q^3 + ...;$$ $F_6$ is the unique modular form of weight $6$ whose leading Fourier--Jacobi coefficient at the level two cusp is $$q + 32q^2 + 244q^3 + ...;$$ and $F_{10}$ is the unique cusp form of weight $10$, normalized to have coprime integral Fourier coefficients. \\
The forms $p_8, p_{12}, p_{16}$ and $p_{20}$ are defined as the power sums $$p_{4k} = \sum_{i=1}^5 g_i^k,$$ where $g_1,...,g_5$ are modular forms of weight $4$ with respect to the discriminant kernel of $L$ which satisfy $\sum_{i=1}^5 g_i = E_2^2.$

\begin{table}[htbp]
\centering
\caption{Rank 14. \\ $y^2 = x^3 + 2(c_2 t^2 + c_6 t + c_{10}) x^2 + (t^5 + d_8 t^3 + d_{12} t^2 + d_{16} t + d_{20})x$}
\begin{tabular}{c|c}
\hline
$c_2$ & $\frac{3}{2} E_2$ \\
\hline
$c_6$ & $\frac{9}{5}E_2^3 - \frac{432}{5} F_6$ \\
\hline
$c_{10}$ & $\frac{81}{200} E_2^5 - \frac{1296}{25} E_2^2 F_6 + \frac{27}{40} E_2 p_8 + \frac{15552}{5} F_{10}$ \\
\hline
$d_8$ & $-\frac{1}{2} p_8 + \frac{1}{10} E_2^4$ \\
\hline
$d_{12}$ & $-\frac{1}{3} p_{12} + \frac{1}{5} E_2^2 p_8 - \frac{2}{75} E_2^6$ \\
\hline
$d_{16}$ & $-\frac{1}{4}p_{16} + \frac{1}{8} p_8^2 + \frac{1}{5} E_2^2 p_{12} - \frac{11}{100} E_2^4 p_8 + \frac{11}{1000} E_2^8$ \\
\hline
$d_{20}$ & $-\frac{1}{5} p_{20} + \frac{1}{6} p_{12} p_8 + \frac{1}{5} E_2^2 p_{16} - \frac{1}{10} E_2^2 p_8^2 - \frac{17}{150} E_2^4 p_{12} + \frac{37}{750} E_2^6 p_8 - \frac{37}{9375} E_2^{10}$ \\
\hline
\end{tabular}
\end{table}

\bibliographystyle{amsplain}
\bibliography{_bib.general}{}
\end{document}